\theoremstyle{plain}
\newtheorem{theorem}{Theorem}[section]
\newtheorem{lemma}[theorem]{Lemma}
\newtheorem{corollary}[theorem]{Corollary}
\newtheorem{proposition}[theorem]{Proposition}
\theoremstyle{definition}
\newtheorem{definition}[theorem]{Definition}
\newtheorem{example}[theorem]{Example}
\newtheorem{assumption}[theorem]{Assumption}
\theoremstyle{remark}
\newcommand{\dv}{\operatorname{div}}
\newcommand{\supp}{\operatorname{supp}}
\newcommand{\dist}{\operatorname{dist}}
\newcommand{\diam}{\operatorname{diam}}
\numberwithin{equation}{section}
\newcommand{\bI}{\mathbb{I}}
\newcommand{\bP}{\mathbb{P}}
\newcommand{\bR}{\mathbb{R}}
\newcommand\cA{\mathcal{A}}
\newcommand\cB{\mathcal{B}}
\newcommand\cD{\mathcal{D}}
\newcommand\cH{\mathcal{H}}
\newcommand\cN{\mathcal{N}}
\newcommand\cM{\mathcal{M}}
\providecommand{\set}[1]{\{#1\}}
\providecommand{\bigset}[1]{\bigl\{#1\bigr\}}
\providecommand{\abs}[1]{\lvert#1\rvert}
\providecommand{\norm}[1]{\lVert#1\rVert}
\renewcommand{\vec}[1]{\boldsymbol{#1}}
\def\Xint#1{\mathchoice
	{\XXint\displaystyle\textstyle{#1}}%
	{\XXint\textstyle\scriptstyle{#1}}%
	{\XXint\scriptstyle\scriptscriptstyle{#1}}%
	{\XXint\scriptscriptstyle\scriptscriptstyle{#1}}%
	\!\int}
\def\XXint#1#2#3{{\setbox0=\hbox{$#1{#2#3}{\int}$}
		\vcenter{\hbox{$#2#3$}}\kern-.5\wd0}}
\def\dashint{\Xint-}
\newcommand{\p}{\partial}
\newcommand{\epsi}{\varepsilon}
\begin{document}

\subjclass[2010]{Primary 35J25, 35B65; Secondary 35J15}

\keywords{}

	\title[mixed boundary value problem]{The Dirichlet-conormal problem with homogeneous and inhomogeneous boundary conditions}
	
%
%

	\author[Hongjie Dong]{Hongjie Dong}	
	
	\address{
Division of Applied Mathematics, Brown University, 182 George Street, Providence, RI 02912, USA}
	
	\email{hongjie\_dong@brown.edu}
\thanks{H. Dong and Z. Li were partially supported by the NSF under agreement DMS-1600593.}

	\author[Zongyuan Li]{Zongyuan Li}
	
	\address{Division of Applied Mathematics, Brown University, 182 George Street, Providence, RI 02912, USA}
	
	\email{zongyuan\_li@brown.edu}
	
\begin{abstract}
We consider the mixed Dirichlet-conormal problem on irregular domains in $\bR^d$. Two types of regularity results will be discussed: the $W^{1,p}$ regularity and a non-tangential maximal function estimate. The domain is assumed to be Reifenberg-flat, and the interfacial boundary is either Reifenberg-flat of co-dimension $2$ or is locally sufficiently close to a Lipschitz function of $m$ variables, where $m=1,\ldots,d-2$. For the non-tangential maximal function estimate, we also require the domain to be Lipschitz.
\end{abstract}
\maketitle

\section{Introduction}
In this paper, we continue our discussion in \cite{CDL} on the mixed Dirichlet-conormal boundary value problems. On a domain $\Omega\subset \bR^d$, we consider the following second-order symmetric divergence form elliptic equation, with two types of boundary conditions prescribed on two different parts of the boundary:
\begin{equation}		\label{eqn-11291710}
\begin{cases}
L u = f+ D_if_i  & \text{in }\, \Omega,\\
Bu = f_i  n_i + g_\cN& \text{on }\, \cN,\\
u = g_\cD & \text{on }\, \cD.
\end{cases}
\end{equation}
Here, the boundary is decomposed into two non-empty and non-intersection portions $\cN$ and $\cD$, separated by their interfacial boundary $\Gamma$:
\begin{equation*}
\p\Omega = \cN\cup\cD,\quad \Gamma :=\overline{\cN}\cap\overline{\cD}.
\end{equation*}
The elliptic operator $L$ and the associated conormal derivative operator $B$ are defined as
\begin{equation}\label{eqn-1201-1421}
\begin{split}
L u &:= D_i(a_{ij}(x)D_j u+b_i(x) u)+\hat{b}_i(x) D_i u+c(x)u,\\
Bu &:= (a_{ij}D_j u+b_i u)n_i,
\end{split}
\end{equation}
where $\vec{n}=(n_i)_{i=1}^d$ is the outer normal direction and $a_{ij}=a_{ji}$. We always assume for some constants $\Lambda\in(0,1]$ and $K>0$,
\begin{equation*}
\Lambda |\xi|^2\le\sum_{i,j=1}^d a_{ij}(x)\xi_j\xi_i\le \Lambda^{-1} |\xi|^2, \quad \forall \xi\in \bR^d\,\,\text{and}\,\, x\in \overline{\Omega},\quad |b_i|+|\hat{b}_i|+|c|\le K.
\end{equation*}

Unlike the purely Dirichlet or conormal boundary value problem, solutions to \eqref{eqn-11291710} can be non-smooth near $\Gamma$ even if the domain, coefficients, and boundary data are all smooth. For example, the function $u={\rm Im}\sqrt{z}$ (with $z=x+iy$) is harmonic in the upper half-plane with zero Dirichlet data on the positive real axis and zero Neumann data on the negative real axis, but u is only in $C^{1/2}$ and $W^{1,4-\varepsilon}$. Our aim is to find minimum smoothness assumptions on $\Omega$ and the separation $\Gamma=\overline{\cN}\cap\overline{\cD}$, such that certain ``optimal regularity'' is achieved.

When $\p\Omega$, $\Gamma$, and all the coefficients are sufficiently smooth, there are quite a few results in the literature concerning the optimal regularity. See, for example, the $W^{1,4-\epsi}$-regularity on half space and the more general result of $W^{s,p}$-regularity on smooth domains in \cite{Sh}. In \cite{Sav}, the optimal $B^{3/2}_{2,\infty}$-regularity on $C^{1,1}$ domains was obtained. For more details and history, see \cite{CDL} and the references therein.

On irregular domains, it turns out that the only requirement for reaching the optimal regularity is certain ``flatness'' of $\p\Omega$ and $\Gamma$. Indeed, for the problem with homogeneous boundary condition, i.e., \eqref{eqn-11291710} with $g_\cN=g_\cD=0$, in \cite{CDL} we proved the $W^{1,4-\epsi}$ regularity for weak solutions, assuming both $\p\Omega$ and $\Gamma$ to be Reifenberg flat (see Assumptions \ref{ass-RF} and \ref{ass-separation}(a)). The boundaries of Reifenberg flat domains are only flat in the sense that they are close to hyperplanes in the Hausdorff distance, not in the sense the regularity. Typically such domains can have fractal structures. Our result in \cite{CDL} is a generalization of the $W^{1,p}$ regularity of purely Dirichlet or conormal problem with ``partially VMO'' coefficients in \cite{DK11} and \cite{DK12}. The first objective of the current paper is to generalize the result in \cite{CDL} by allowing $\Gamma$ to be perturbations of Lipschitz graphs, not just hyperplanes. See Theorem \ref{thm-wellposedness}.

The second objective of the paper is to study the extension problem of the inhomogeneous boundary data $g_\cN$ and $g_\cD$. Stronger than the usual trace spaces, we consider almost everywhere defined boundary data: $g_\cN\in L^q(\cN)$ and $g_\cD\in W^{1,q}(\cD)$. We aim to derive an $L^q$ non-tangential maximal function estimate for the Laplace equation. Such non-tangential maximal function estimate also implies the unique solvability: for any $g_\cN\in L^q(\cN)$ and $g_\cD\in W^{1,q}(\cD)$, we can find a unique harmonic function satisfying the boundary conditions in the sense of ``non-tangential limit''.

In this direction, the study of the above problem is an extension of that of the Neumann problem with $L^q$ data or the Dirichlet problem with $W^{1,q}$ data on Lipschitz domains, for which the results were obtained in \cite{JK81} for $q=2$ and in \cite{DK-87} for $q<2+\epsi$. Note that the range $q<2+\epsi$ is optimal. Due to the failure of the usual $L^2$-method, the corresponding estimate for the mixed problem was raised as an open problem in \cite{Kenig-book}. Initiated in \cite{B}, one approach is to assume an additional geometric condition on the domain such that the optimal regularity of the non-tangential maximal function is above $L^2$, hence the classical $L^2$-method still applies. See also \cite{MR2309180}. Such geometric assumption clearly excludes smooth domains. The other approach started from the $L^1$-solvability with boundary data in Hardy spaces. In this direction, the best result so far is the unique $L^{1+\epsi}$-solvability obtained in \cite{TOB}. For this, they allowed $\Omega$ to be any Lipschitz domain and $\Gamma$ to be very rough: merely Ahlfors regular of Hausdorff dimension close to $d-2$, in addition to the so-called ``corkscrew conditions'' on $\cD$. Following \cite{TOB}, recently an explicit solvability range $q<d/(d-1)$ was obtained in \cite{BC}, assuming $\p\Omega \in C^{1,1}$ and $\Gamma$ to be a Lipschitz graph.

In the current paper, we prove that for $m=0,\ldots,d-2$, the $L^{q}$ non-tangential maximal function estimate holds when $q\in (1,(m+2)/(m+1))$, under the conditions that $\p\Omega$ is Lipschitz and Reifenberg flat, and $\Gamma$ is a perturbation (measured by the Hausdorff distance) of a Lipschitz graph in $m$ variables, $m\in[0,d-2]$. See Theorem \ref{thm-inhomo}. In particular, when $m=d-2$, our result generalized the one in \cite{BC} by allowing rougher $\p\Omega$ and $\Gamma$. In another special case when $m=0$, i.e., $\Gamma$ is Reifenberg flat of co-dimension 2, the optimal range $q<2-\epsi$ is achieved.

In this paper, we only consider the non-tangential maximal function estimate for the Laplace equation. In a subsequent work, we plan to discuss the perturbation theory where elliptic operators with variable coefficients are considered, with the aim to generalize the corresponding result for the conormal problem in \cite{DPR}.

The rest of the paper is organized as follows. In Section \ref{sec-main results}, we will set up the problems and then give our main results: Theorem \ref{thm-wellposedness} for $W^{1,p}$-estimate of weak solutions and Theorem \ref{thm-inhomo} for the $L^q$ non-tangential maximal function estimate. In Sections \ref{sec-halfspace} and \ref{sec-W1p}, we give the proof of Theorem \ref{thm-wellposedness}, where Section \ref{sec-halfspace} is for the estimates on the half space and Section \ref{sec-W1p} is for the perturbation argument. Finally, we prove Theorem \ref{thm-inhomo} in Section \ref{sec-ntm}. In the proof, Corollary \ref{cor-10182245} from the previous homogeneous boundary data part plays a key role in improving the regularity.
\section{Problem set up and main results}\label{sec-main results}
To begin with, let us give all the definitions of domains which we will be discussing throughout this paper.
\begin{assumption}[$M$-Lipschitz]\label{ass-small-Lip}
There exists some constant $R_0>0$, such that, for any $x_0\in \p\Omega$, there exists a Lipschitz function $\psi_0:\bR^{d-1}\rightarrow \bR$ such that in some coordinate system (upto rotation and translation),
\begin{equation*}
\Omega_{R_0}(x_0):= \Omega \cap B_{R_0}(x_0) = \{x \in B_{R_0}(0) : x^1 > \psi_0(x')\}\quad\mbox{and}\quad  |D\psi_0(x')|<M \quad \mbox{a.e. }
\end{equation*}
\end{assumption}
\begin{assumption}[$\gamma$-flat]\label{ass-RF}
There exists a constant $R_1>0$, such that, for any $x_0\in \partial \Omega$ and $R\in (0,R_1]$, in some coordinate system (upto rotation and translation) depending on $x_0$ and $R$, we have
\begin{equation*}	
\{x:x^1>x_0^1+\gamma R\}\cap B_R\subset \Omega_R(x_0)\subset \{x:x^1>x_0^1-\gamma R\}\cap B_R.
\end{equation*}
\end{assumption}
Clearly, $\gamma$-Lipschitz implies $\gamma$-flat. However, the following example shows that Lipschitz and Reifenberg-flat do not imply small Lipschitz.
\begin{example}
We construct a curve by gluing infinitely many congruent copies of a ``S''-shaped curve, while the $k$-th copy is of size $2^{-k}$. The ``S''-shaped curve is designed as follows. We start from a horizontal line segment with length $1/2$. Then on its right end we connect a  line segment with slope $1/m$ and length $1/4$. Repeat this process until we link $m+1$ line segments together, where the $j$th segment has slope $(j-1)/m$ and length $2^{-j}$. The last step is to extend it symmetrically beyond the right end point.

By choosing $m$ large enough, the curve is $\epsi$-flat, while the Lipschitz constant is at least $\tan (\pi/8)$.
\end{example}

As shown in \cite{LMS}, any small Reifenberg flat domain is a $W^{1,p}$-extension domain for every $p\in[1,\infty]$. Hence we have all the Sobolev inequalities up to the first order. In the following, we state a local Poincac\'e inequality for functions vanishing only on part of the boundary, which can be found in \cite[Corollary~3.2~(b)]{CDL}. Throughout the paper, for $\cD\subset \p\Omega$ and $p\in[1,\infty)$, we denote the space $W^{1,p}_\cD$ to be the closure of $C^\infty_c(\overline{\Omega}\setminus\cD)$ under the usual $W^{1,p}$-norm.
\begin{lemma}[Poincar\'e inequality with mixed boundary condition]	\label{lem-poincare}
Let $\gamma\in [0,1/48]$ and $\Omega\subset \bR^d$ be a Reifenberg flat domain satisfying Assumption \ref{ass-RF} $(\gamma)$.
Let $x_0\in \overline{\Omega}$, $R\in (0, R_1/4]$, and $\cD\subset \partial \Omega$ with $\cD\cap B_R(x_0)\neq \emptyset$.
If there exist $z_0\in \cD\cap B_R(x_0)$ and $\alpha\in (0,1)$ such that
\begin{equation*}
B_{\alpha R}(z_0)\subset B_R(x_0), \quad \big(\partial \Omega\cap B_{\alpha R}(z_0)\big) \subset \big(\cD \cap B_R(x_0)\big),
\end{equation*}
then, for any $u\in W^{1,2}_{\cD}(\Omega)$, we have
$$
\|u\|_{L^2(\Omega_R(x_0))}\le CR\|Du\|_{L^2(\Omega_{2R}(x_0))},
$$
where $C=C(d,\alpha)$.
\end{lemma}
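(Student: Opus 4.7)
The plan is to first establish a Poincaré inequality on the small set $\Omega_{\alpha R}(z_0)$ by exploiting that all of $\partial\Omega\cap B_{\alpha R}(z_0)$ lies in $\cD$ where $u$ vanishes, and then propagate it to the full region $\Omega_R(x_0)$ via the standard mean Poincaré inequality available on Reifenberg-flat domains.

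First I would extend $u$ by zero across $\partial\Omega\cap B_{\alpha R}(z_0)\subset \cD$. Since $u\in W^{1,2}_{\cD}(\Omega)$, this zero extension $\tilde u$ lies in $W^{1,2}(B_{\alpha R}(z_0))$. Because $z_0\in\partial\Omega$ and $\alpha R\leq R\leq R_1/4$, Assumption \ref{ass-RF} with $\gamma\leq 1/48$ forces
\begin{equation*}
|B_{\alpha R}(z_0)\setminus\Omega|\geq(\tfrac12-\gamma)|B_{\alpha R}(z_0)|\geq c_0|B_{\alpha R}(z_0)|,
\end{equation*}
so $\tilde u$ vanishes on a set of density bounded below. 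The classical Poincaré inequality on a ball for functions vanishing on a positive-measure subset then gives
\begin{equation*}
\|u\|_{L^2(\Omega_{\alpha R}(z_0))}=\|\tilde u\|_{L^2(B_{\alpha R}(z_0))}\leq C(d)\,\alpha R\,\|Du\|_{L^2(\Omega_{\alpha R}(z_0))}.
\end{equation*}

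To transfer this control from $\Omega_{\alpha R}(z_0)$ to all of $\Omega_R(x_0)$, I invoke the ordinary mean Poincaré inequality on $\Omega_{2R}(x_0)$,
\begin{equation*}
\|u-(u)_{\Omega_{2R}(x_0)}\|_{L^2(\Omega_{2R}(x_0))}\leq C(d)\,R\,\|Du\|_{L^2(\Omega_{2R}(x_0))},
\end{equation*}
which holds with a uniform constant since small Reifenberg-flat pieces are $W^{1,2}$-extension domains by \cite{LMS} (as recalled immediately above the statement). Using $B_{\alpha R}(z_0)\subset B_{2R}(x_0)$ and the lower bound $|\Omega_{\alpha R}(z_0)|\geq c_1(\alpha R)^d$ (another consequence of Reifenberg-flatness), Cauchy-Schwarz compares the two averages:
\begin{equation*}
|(u)_{\Omega_{\alpha R}(z_0)}-(u)_{\Omega_{2R}(x_0)}|\leq \frac{\|u-(u)_{\Omega_{2R}(x_0)}\|_{L^2(\Omega_{2R}(x_0))}}{|\Omega_{\alpha R}(z_0)|^{1/2}}\leq C(d,\alpha)\,R^{1-d/2}\,\|Du\|_{L^2(\Omega_{2R}(x_0))}.
\end{equation*}
Combined with the bound $|(u)_{\Omega_{\alpha R}(z_0)}|\leq C(d,\alpha)R^{1-d/2}\|Du\|_{L^2(\Omega_{\alpha R}(z_0))}$ coming from the first step and the trivial $|\Omega_R(x_0)|^{1/2}\leq CR^{d/2}$, a triangle inequality closes the estimate.

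The main obstacle is ensuring uniform constants at every step. The condition $\gamma\leq 1/48$ is small enough both to guarantee a fixed lower bound on the exterior-measure fraction used in the ball Poincaré inequality, and to permit direct invocation of the uniform extension/Poincaré results for Reifenberg-flat domains from \cite{LMS}. Once these two uniform ingredients are in place, everything else is routine algebra with the averages.
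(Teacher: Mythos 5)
The paper does not prove this lemma; it cites \cite[Corollary~3.2(b)]{CDL} and uses it as a black box, so there is no internal proof to compare against. Your argument, however, is a correct and self-contained derivation, and it is essentially the standard route one would take for such a statement: zero extension across the purely-Dirichlet part of the boundary, use of Reifenberg flatness to get an exterior density bound, the ball Poincar\'e inequality for functions vanishing on a set of positive density, the interior mean Poincar\'e on the larger piece $\Omega_{2R}(x_0)$, and a comparison of averages.

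Two small points worth noting. First, the inequality $|B_{\alpha R}(z_0)\setminus\Omega|\geq(\tfrac12-\gamma)|B_{\alpha R}(z_0)|$ is not literally what Assumption \ref{ass-RF} gives: the exterior contains the cap $\{x^1\le z_0^1-\gamma\alpha R\}\cap B_{\alpha R}(z_0)$, whose measure is $(\tfrac12-c(d)\gamma)|B_{\alpha R}(z_0)|$ with a dimensional constant $c(d)$ coming from the volume of the slab; what matters (and what you correctly use) is only the weaker conclusion $\geq c_0(d)|B_{\alpha R}(z_0)|$, which holds since $\gamma\le 1/48$. Second, the step where you invoke a scale-invariant mean Poincar\'e inequality on $\Omega_{2R}(x_0)$ with constant $C(d)$ deserves a word: the constant there really depends on the local extension constant for Reifenberg-flat pieces (hence implicitly on the flatness bound $\gamma\le 1/48$ and possibly on $R\le R_1/2$), but this is uniform under your hypotheses and is exactly what the paper is relying on when it quotes \cite{LMS} just above the statement, so your appeal is legitimate. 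With these caveats the estimate chain closes as you describe, and the dependence $C=C(d,\alpha)$ is recovered correctly because $\alpha$ enters only through $|\Omega_{\alpha R}(z_0)|\gtrsim(\alpha R)^d$.
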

The assumptions for the interfacial boundary $\Gamma$ are given as follows.
\begin{assumption}[$(\gamma,m)$-flat separation]\label{ass-separation}
Let $\Omega$ be a domain satisfying either Assumption \ref{ass-small-Lip} or \ref{ass-RF}, with $\p\Omega$ divided into two non-intersecting portions $\cD$ and $\cN$. Let $\Gamma$ be the boundary (relative to $\partial \Omega$) of $\cD$. We call $\Gamma$ a $(\gamma,m)$-flat separation if for any $x_0\in \Gamma$ and $R\in (0, R_1]$, the following holds.
\begin{enumerate}
\item When $m=0$, the coordinate system given in the previous assumption satisfies
$$
\big(\partial \Omega \cap B_R(x_0)\cap \{x: x^2>x_0^2+\gamma R\}\big)\subset \cD,
$$
$$
\big(\partial \Omega \cap B_R(x_0)\cap \{x: x^2<x_0^2-\gamma R\}\big)\subset \cN.
$$
\item When $1\leq m \leq d-2$, there is a Lipschitz function $\phi:\bR^m\rightarrow\bR$ with Lipschitz constant $M$, such that the coordinate system satisfy
$$
\big(\partial \Omega \cap B_R(x_0)\cap \{x: x^2 > \phi(x^3,\ldots,x^{m+2})+\gamma R\}\big)\subset \cD,
$$
$$
\big(\partial \Omega \cap B_R(x_0)\cap \{x: x^2<\phi(x^3,\ldots,x^{m+2})-\gamma R\}\big)\subset \cN.
$$
\end{enumerate}
\end{assumption}

Now we formulate our first main results. Recall the elliptic operator $L$ and its associated conormal derivative operator $B$ in \eqref{eqn-1201-1421}. For any constant $\lambda$, we denote $L_\lambda:=L-\lambda$. The following problem will be discussed:
\begin{equation}		\label{eqn-07261524}
\begin{cases}
L_\lambda u = f+ D_if_i  & \text{in }\, \Omega,\\
Bu = f_i  n_i  & \text{on }\, \cN,\\
u = 0 & \text{on }\, \cD.
\end{cases}
\end{equation}
We consider weak solutions to \eqref{eqn-07261524}.
\begin{definition}[$W^{1,p}_\cD$-weak solution]
                                    \label{def3.16}
For $\Omega, \cN,\cD, \Gamma$ given as before, and $p\in[1,\infty)$, we call $u\in W^{1,p}_\cD$ a weak solution to \eqref{eqn-07261524}, if for any $\zeta\in C^\infty_c(\Omega\cup\cN)$ (or equivalently, $W^{1,p/(p-1)}_{\cD}(\Omega)$),
\begin{equation*}
\int_\Omega (- a_{ij}D_j u - b_i u) D_i \zeta + (\hat{b}_i D_i u + cu)\zeta\,dx=\int_\Omega f  \zeta\,dx-\int_\Omega f_i D_i\zeta\,dx.
\end{equation*}
\end{definition}
Since there is no boundary term in the integral form, the weak solution is still well defined for very rough $\p\Omega$. In particular, we can still discuss weak solution for Reifenberg flat domains where $\vec{n}$ is not point-wise defined and there is no notion of the trace space.

For the leading coefficients, the following small BMO condition will be considered.
\begin{assumption}[$\theta$-BMO]\label{ass-smallBMO}
There exists $R_1\in (0,1]$ such that for any $x\in \overline{\Omega}$ and $r\in (0, R_1]$, we have
 $$
\dashint_{\Omega_r(x)}|a_{ij}(y)-(a_{ij})_{\Omega_r(x)}|\,dy < \theta.
$$
\end{assumption}
In the above assumption and thoughout this paper, we use the following notation for the average:
\begin{equation*}
(f)_{\Omega_r(x)} = \dashint_{\Omega_r(x)} f(y)\,dy := \frac{1}{|{\Omega_r(x)}|} \int_{\Omega_r(x)} f(y)\,dy.
\end{equation*}
In one of our main results, the following sign condition will be assumed: $L_01\leq 0$ is satisfied in the weak sense, if
\begin{equation*}
\int_\Omega (-b_iD_i\zeta + c\zeta)\leq 0,\quad \forall \zeta \in C_c^\infty(\Omega),\,\,\zeta\geq 0.
\end{equation*}
We also denote the following index $p_{*}$:
\begin{equation*}p_{*}:=
\begin{cases}
pd/(p+d)\,\,&\text{when}\,\, p>d/(d-1),\\
1+\epsi \,\,&\text{when}\,\, p\leq d/(d-1),
\end{cases}
\end{equation*}
where $\epsi$ can be any positive constant. In this paper, for simplicity, when $\lambda>0$ we use the following notation
\begin{equation*}
U:=|Du| + \sqrt{\lambda}|u|,\quad F:= \sum_i|f_i| + \frac{1}{\sqrt{\lambda}}|f|.
\end{equation*}
Now we state our first main result regarding equations with homogeneous boundary conditions.
\begin{theorem}\label{thm-wellposedness}
For any integer $m\in[0,d-2]$ and constant $p\in (2(m+2)/(m+3),2(m+2)/(m+1))$, we can find positive constants $(\gamma_0,\theta_0)=(\gamma_0,\theta_0)(d,p,M,\Lambda)$, such that if Assumptions \ref{ass-RF} $(\gamma_0)$, \ref{ass-separation} $(\gamma_0,m)$, and \ref{ass-smallBMO} $(\theta_0)$ are satisfied, the following hold.\\
(i) There exists $\lambda_0=\lambda_0(d,p, M, \Lambda, R_1, K)$ such that for any $(f_i)_{i=1}^d\in (L^p(\Omega))^d$ and $f\in L^p(\Omega)$, the problem \eqref{eqn-07261524} with $\lambda>\lambda_0$ has a unique solution $u\in W^{1,p}_\cD(\Omega)$. The solution satisfies the estimate
\begin{equation*}
\norm{U}_{L^p(\Omega)} \leq C \norm{F}_{L^p(\Omega)},
\end{equation*}
where $C=C(d,p,M,\Lambda)$ is a constant.\\
(ii) On a bounded domain $\Omega$, if $L_01\leq 0$ in the weak sense, then for any $(f_i)_{i=1}^d\in (L^p(\Omega))^d$ and $f\in L^{p_{*}}(\Omega)$, \eqref{eqn-07261524} with $\lambda=0$ has a unique solution $u\in W^{1,p}_\cD(\Omega)$ satisfying
\begin{equation*}
\norm{u}_{W^{1,p}(\Omega)} \leq C\Bigg(\sum_{i=1}^d\norm{f_i}_{L^p(\Omega)} + \norm{f}_{L^{p_{*}}(\Omega)}\Bigg),
\end{equation*}
where $C$ is a constant independent of $u$, $f$, and $f_i$.
\end{theorem}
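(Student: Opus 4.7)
The plan is to follow the Calderón-Zygmund perturbation scheme already used for the $m=0$ case in \cite{CDL}. Granting the a priori estimate $\|U\|_{L^p(\Omega)} \le C\|F\|_{L^p(\Omega)}$, unique solvability for $\lambda > \lambda_0$ follows by the method of continuity from the $L^2$ theory: the $L^2$ bilinear form is coercive on $W^{1,2}_\cD(\Omega)$ for $\lambda$ sufficiently large by Lax-Milgram together with Lemma \ref{lem-poincare}. So the task reduces to establishing the a priori $W^{1,p}$ estimate, which splits into a model problem computation (Section \ref{sec-halfspace}) and a mean-oscillation perturbation argument (Section \ref{sec-W1p}).

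For the model problem, let $v$ solve $\Delta v = 0$ on a half-ball $B_r^+ = B_r \cap \{x^1 > 0\}$ with idealized mixed boundary data: zero Dirichlet on $\{x^1 = 0,\ x^2 > \phi(x^3,\ldots,x^{m+2})\}$ and zero Neumann on $\{x^1 = 0,\ x^2 < \phi(x^3,\ldots,x^{m+2})\}$. The target is a higher-integrability/mean-oscillation estimate for $Dv$ valid for every $p \in (2(m+2)/(m+3),\,2(m+2)/(m+1))$. The sharp exponent $2(m+2)/(m+1)$ comes from the $\mathrm{Im}\sqrt{z}$-type singularity of the mixed problem across the codimension-$2$ interface: near $\Gamma$ the problem is essentially two-dimensional in the transverse directions, and $|Dv|$ blows up like the inverse square root of the distance to $\Gamma$. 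The flat-interface case $m=0$ is handled as in \cite{CDL}; for $1 \le m \le d-2$ I would work scale-by-scale, approximating $\phi$ on each small ball by its tangent affine map (which reduces, after a rotation in the $(x^2,\ldots,x^{m+2})$-plane, to the $m=0$ case) and iterating Campanato-style, with the Lipschitz deviation of $\phi$ from affine absorbed as a perturbation.

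With the model estimate in hand, the perturbation compares $u$ on each small ball $B_r(x_0) \cap \Omega$ with a model solution $v$ built on a half-space approximation of $\Omega_r(x_0)$, a flat approximation of $\Gamma$, and the frozen average of $a_{ij}$. Caccioppoli together with Lemma \ref{lem-poincare} controls $\|D(u-v)\|_{L^2(B_{r/2})}$ by $(\gamma+\theta)^{1/2}\|U\|_{L^2(B_r)} + \|F\|_{L^2(B_r)}$; combined with the model decay, this yields a pointwise sharp-function bound on $Du$, and the Fefferman-Stein inequality upgrades it to the required $L^p$ bound, with lower-order terms absorbed by taking $\lambda_0$ large. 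For part (ii), uniqueness of the $\lambda=0$ problem on bounded $\Omega$ follows from $L_0 1 \le 0$ via a Stampacchia-type $L^\infty$ estimate and a maximum principle; existence then follows by the Fredholm alternative, with the Sobolev conjugate $p_*$ capturing the lower-order $f$ term via Sobolev embedding. The main technical obstacle I anticipate is precisely the model estimate for $m \ge 1$: preserving the sharp exponent $2(m+2)/(m+1)$ while accommodating a genuinely Lipschitz (not small-Lipschitz) interface requires careful iteration, since any loss of integrability in the model step directly shrinks the final range of $p$.
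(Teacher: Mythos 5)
Your overall architecture---a model half-space estimate plus a Calder\'on--Zygmund-type perturbation, then lower-order/solvability bookkeeping---matches the paper's strategy, and the reductions you describe for part (ii) are essentially correct. The problem lies in the key step, the model estimate for $1\le m\le d-2$, where your proposed method would fail.

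You propose to handle a Lipschitz interface $\Gamma=\{x^1=0,\ x^2=\phi(x^3,\ldots,x^{m+2})\}$ by approximating $\phi$ on each small ball by a tangent affine map and running a Campanato-style iteration, with the ``Lipschitz deviation from affine'' absorbed as a perturbation. This cannot work under the hypotheses of the theorem. First, $\phi$ is only Lipschitz, so there is no tangent affine map at a point. Second, and more fundamentally, the Lipschitz constant $M$ of $\phi$ is \emph{not} assumed small (compare Assumption \ref{ass-separation}, where only $\gamma$ is small while $M$ is arbitrary). On a ball of radius $r$, the deviation of $\phi$ from any best-fit affine is of order $Mr$; after rescaling to unit radius the deviation is $O(M)$, a fixed constant that does not shrink as you go down in scale. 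A Lipschitz graph is scale-invariant---it does not flatten---so the ``deviation from affine'' is never a small perturbation, and the Campanato iteration does not close. This is exactly the distinction between Reifenberg-flatness (where the defect $\gamma$ is small at every scale) and a genuine Lipschitz graph. The paper therefore avoids any flattening of $\Gamma$: Lemma \ref{lem-halfspace-(m+2)d} handles the Lipschitz codimension-$2$ interface in $\bR^{m+2}$ non-perturbatively by importing the $B^{1/2}_{2,\infty}$ Besov estimate of Brown--Croyle \cite[Theorem~3.3]{BC} (a Rellich/layer-potential-type result valid for arbitrary Lipschitz constant), after a cutoff and even reflection; the case $0<m<d-2$ is then obtained from this by a dimension-reduction argument---viewing the equation as a problem in $\bar x=(x^1,\ldots,x^{m+2})$ with $\hat x$-derivatives on the right, controlling $\hat x$-derivatives by repeated Caccioppoli via translation invariance in $\hat x$, and closing with an anisotropic Sobolev embedding---rather than by any perturbation of the flat case.

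A secondary, more stylistic point: since the model gradient is only in $L^{2(m+2)/(m+1)-\epsi}$ and \emph{not} in $L^\infty$, the classical sharp-function comparison (which compares $Du$ to the nearly-constant model gradient) does not apply directly. The paper instead decomposes $U\le W+V$ with $W$ small in $L^2$ and $V$ controlled in $L^q$ for $q$ just above $p$ (Lemma \ref{lem-decomposition}), and then runs the level-set/crawling-of-the-ink-spots argument of \cite{CP,KS} (Lemma \ref{lem-levelset-local}, Corollary \ref{cor-levelset-global}). If you insist on a Fefferman--Stein route you would need a version adapted to merely $L^q$ (not $L^\infty$) model bounds; this is doable, but the level-set route is the one that makes the restriction $p<2(m+2)/(m+1)$ transparent.
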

The following example in \cite{CDL} shows that the symmetry of $\vec A$ is required for the $W^{1,p}$-regularity when $p$ is away from $2$.
\begin{example}
In $\bR^2_+:=\{(x,y):y>0\}$, let
$u(x,y)={\rm Im}(x+iy)^s$ with $s \in (0,1/2)$. We have
$$D_i(a_{ij}D_j u)=0 \text{ on } \bR^2_{+},\quad u=0 \text{ on } \p\bR^2_{+}\cap\set{x>0},\quad a_{ij}D_j u n_i=0 \text{ on }\p\bR^2_{+}\cap\set{x<0},
$$
where
$$(a_{ij})_{i,j=1}^{2} = \begin{bmatrix}
1 & \cot(\pi s)\\-\cot(\pi s) & 1
\end{bmatrix}.
$$
Since $Du$ is of order $r^{s-1}$, near the origin we have $Du\in L^p$ only for $p<\frac{2}{1-s}$. Note that $\frac{2}{1-s}<4$ and $\frac{2}{1-s}\searrow 2$ as $s\searrow 0$.
\end{example}

Next we formulate the problem with inhomogeneous boundary data. To make sense of the boundary condition, we introduce the following concepts.
\begin{definition}
Let $\beta>0$ be a constant. For a domain $\Omega$ and a point $x\in \p\Omega$, we define the non-tangential approach region at $x$ with opening $\beta$ as
\begin{equation*}
\Gamma_\beta(x):=\{y\in\Omega:|x-y|\leq (1+\beta) \dist(y,\p\Omega)\}.
\end{equation*}
The corresponding non-tangential maximal function is defined as
\begin{equation*}
\vec{N}(u)(x):=\sup_{Y\in\Gamma_\beta(x)} |u(y)|.
\end{equation*}
In this paper, we will omit the dependence of the opening $\beta$.
\end{definition}
For some $q\in[1,\infty)$, we consider the following boundary value problem
\begin{equation}		\label{190527@eq2}
\begin{cases}
\Delta u = 0  & \text{in }\, \Omega,\\
\frac{\p u}{\p \vec{n}} = g_\cN  & \text{on }\, \cN,\\
u = g_\cD & \text{on }\, \cD,\\
\vec{N}(Du) \in L^q(\p\Omega).
\end{cases}
\end{equation}
For $g_\cN\in L^q(\cN)$ and $g_\cD\in W^{1,q}(\cD)$, we call $u$ a solution to \eqref{190527@eq2}, if $u\in W^{1,2}(\p\Omega)$ is a weak solution in the usual sense, and $\vec{N}(Du)$ is controlled. For such solution, the Dirichlet boundary condition is satisfied in the sense of non-tangential limit and the conormal boundary condition is satisfied in the sense of ``$L^q$-weak non-tangential limit''. See, for instance, \cite[Theorem~1.8.1]{Kenig-book}.

The second main result of the paper is the following theorem.
\begin{theorem}\label{thm-inhomo}
Let $\Omega\subset \bR^d$ be a bounded domain satisfying Assumption \ref{ass-small-Lip}$(M)$. For any $q\in(1,\frac{m+2}{m+1})$, we can find sufficiently small $\gamma_1=\gamma_{1}(d,q,M)>0$, such that if $\Omega$ and $\Gamma$ satisfy the Assumptions \ref{ass-RF} $(\gamma_1)$ and \ref{ass-separation} $(\gamma_1,m)$, then for any $g_\cN\in L^q(\cN), g_\cD\in W^{1,q}(\cD)$, there exists a unique solution $u$ to the problem \eqref{190527@eq2} satisfying
\begin{equation*}
\norm{\vec{N}(Du)}_{L^q(\p\Omega)}\le  C\norm{g_\cN}_{L^q(\cN)} + C\norm{g_\cD}_{W^{1,q}(\cD)},
\end{equation*}
where $C=C(d,M,R_0,R_1,\text{diam}(\Omega),q)$ is a constant.
\end{theorem}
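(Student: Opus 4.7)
The plan is to combine three ingredients: a reduction to zero Dirichlet data, the classical non-tangential maximal function theory of Jerison-Kenig \cite{JK81} and Dahlberg-Kenig \cite{DK-87} for pure Dirichlet/Neumann problems on Lipschitz domains, and the interior $W^{1,p}$-regularity from Theorem \ref{thm-wellposedness} (equivalently, Corollary \ref{cor-10182245}).

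First I would reduce to the case $g_\cD = 0$. Since $\p\Omega$ is Lipschitz, there is a bounded extension $h \in W^{1,q}(\Omega)$ of $g_\cD$ whose gradient satisfies $\|\vec{N}(\nabla h)\|_{L^q(\p\Omega)} \lesssim \|g_\cD\|_{W^{1,q}(\cD)}$ (for instance via a smoothed Whitney-type extension, or by solving a pure Dirichlet problem with data $g_\cD$ extended by zero and invoking the Dahlberg-Kenig estimate). Writing $u = h + v$, the new unknown $v$ satisfies a mixed problem with zero Dirichlet data and modified Neumann datum $g_\cN - \p h/\p\vec{n} \in L^q(\cN)$. Existence of a weak solution $v \in W^{1,2}_\cD(\Omega)$ for smooth data then follows from Lax-Milgram combined with the Poincar\'e inequality in Lemma \ref{lem-poincare}; the $L^q$-NTM estimate, once established, upgrades this to existence and uniqueness for general $L^q$ data by density and by applying the estimate to the difference of two solutions with zero data.

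The core of the argument is the $L^q$-NTM estimate itself. Fix $q$ in the stated range and choose $p \in (q, 2(m+2)/(m+1))$, which is possible precisely because $q < (m+2)/(m+1)$. Decompose $\p\Omega$ into a ``safe'' region $\{x : \dist(x,\Gamma) \geq \delta_0\}$ and a ``singular'' region near $\Gamma$. In the safe region, after localization with a cutoff, $v$ solves (up to smooth error terms coming from the commutator with the cutoff) a pure Dirichlet or pure Neumann problem on a Lipschitz piece of $\p\Omega$, so the classical $L^q$-NTM bounds of \cite{JK81, DK-87} apply for any $q < 2 + \epsi$. In the singular region, apply Theorem \ref{thm-wellposedness} to a suitably localized version of the equation, obtaining $\|\nabla v\|_{L^p(\Omega')} \lesssim \|g_\cN\|_{L^q(\cN)} + \|g_\cD\|_{W^{1,q}(\cD)}$ on a neighborhood $\Omega'$ of $\Gamma$, where the safe-region contribution from the cutoff commutator is absorbed by the previous step. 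The final and crucial step converts this interior $W^{1,p}$-bound into an $L^q$-NTM boundary bound via a Whitney-type decomposition of $\Omega'$ adapted to the $m$-dimensional graph structure of $\Gamma$.

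The main obstacle is this last conversion. Heuristically, for a boundary point $x$ at distance $\rho := \dist(x,\Gamma)$ from $\Gamma$, the quantity $\vec{N}(\nabla v)(x)$ is controlled by an $L^p$-average of $|\nabla v|$ on a ball of radius $\sim \rho$. Integrating the $q$-th power over a boundary strip of thickness $\sim \rho$ parallel to the graph $\Gamma$ costs a factor $\rho^{(d-1)-m}$ from the transverse directions, while H\"older with exponent $p/q$ and an interior Caccioppoli-type estimate convert the $L^p$-average on a ball to the weighted $L^p$-norm globally at an additional scale cost. Summing over dyadic scales of $\rho$ produces a convergent geometric series precisely when $q(m+1) < m+2$, i.e., $q < (m+2)/(m+1)$, which matches the theorem. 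Carrying this out rigorously requires careful exploitation of the $(\gamma_1,m)$-flat separation assumption on $\Gamma$ uniformly at all scales, together with the Lipschitz character of $\p\Omega$ to realize the Whitney cubes and control the summation kernel; this is where the bulk of the technical work lies.
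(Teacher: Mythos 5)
Your high-level outline points at the right ingredients --- reduction to $g_\cD=0$, classical $L^q$-NTM theory for pure Dirichlet/Neumann problems, the $W^{1,p}$-regularity via Corollary~\ref{cor-10182245}, and a Whitney-style dyadic argument near $\Gamma$ --- and your scaling heuristic for why the threshold $q<(m+2)/(m+1)$ appears is morally correct. However, there are two genuine gaps that prevent this from assembling into a proof.

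\textbf{Existence is circular as stated.} You propose to get existence from Lax--Milgram in $W^{1,2}_\cD$ and then ``upgrade'' once the $L^q$-NTM estimate is established. But the hard part of the theorem is precisely existence of a solution in the class $\vec N(Du)\in L^q(\p\Omega)$, and an a priori estimate only bites on objects already known to be in the class. Lax--Milgram gives a $W^{1,2}$ weak solution, but on a merely Lipschitz domain with a mixed condition there is no reason this solution should have $\vec N(Du)$ in any $L^q$, not even for smooth data, and you do not explain why it would. The paper anchors the argument with the $L^{1+\epsi}$-solvability of Taylor--Ott--Brown \cite{TOB} (Lemma~\ref{lem-1115-2129}), whose hypotheses (Ahlfors regularity of $\Gamma$ of dimension close to $d-2$, and the corkscrew condition on $\cD$) are verified from Assumption~\ref{ass-separation}. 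This provides the solution with $\vec N(Du)\in L^{q_0}$ for some $q_0>1$, which is the object the estimate is then proved for. Without this or a replacement, your argument has no starting point.

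\textbf{The ``safe region / singular region'' split does not decouple.} You suggest localizing away from $\Gamma$ and applying \cite{JK81,DK-87} there, then treating a neighborhood of $\Gamma$ via Corollary~\ref{cor-10182245}. But the cutoff commutator terms in the safe region involve $v$ and $\nabla v$ throughout a neighborhood including near $\Gamma$, and Corollary~\ref{cor-10182245} in the singular region needs $\norm{Dv}_{L^1}$ on a strictly larger set reaching into the safe region. The two pieces feed into each other and cannot be handled sequentially. The paper instead proves a reverse H\"older inequality \emph{on surface cubes} for the non-tangential maximal function (Proposition~\ref{prop-190603-1}) and feeds it into Shen's real-variable self-improvement theorem (Theorem~\ref{thm-shen}, from \cite{Shen}), which is a good-$\lambda$ machine that automatically resolves exactly this coupling. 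The proof of Proposition~\ref{prop-190603-1} is where the Whitney decomposition of $Q\setminus\Gamma$ lives: it produces a weighted bound (Lemma~\ref{lem-191019-0034})
$$
\int_Q |Dv|^2\,\delta^{1-\epsi'} \lesssim \int_{T(2Q)} |Dv|^2\,\delta^{-\epsi'},
$$
which is then combined, via two applications of H\"older with carefully tuned exponents $p$ and $\epsi'$, with the $L^p$--$L^1$ estimate of Corollary~\ref{cor-10182245}. The requirement that the two $\delta$-weighted integrals be finite, together with $p<2(m+2)/(m+1)$, is what produces the threshold $q<(m+2)/(m+1)$. Your heuristic cost factor $\rho^{(d-1)-m}$ identifies the right codimension but is not a substitute for this exponent bookkeeping, and you yourself flag that the ``bulk of the technical work'' remains; in fact that technical work is the core of the proof.
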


\section{Harmonic function on half space}\label{sec-halfspace}
Let $\phi$ be a Lipschitz function $\bR^m\rightarrow \bR$ with $\phi(0)=0$. Let
$$
\bR^{d}_{+}:=\{x^1>0\},\quad B^{+}_R:=B_R(0)\cap \{x^1>0\}.
$$
Throughout this section, for $m=0,\ldots,d-2$, we use the notation
\begin{equation*}
\Gamma:= \{x^1=0, x^2=\phi(x^3,\ldots,x^{m+2})\},\quad \cD:=\{x^1=0,x^2>\phi\},\quad \cN:=\{x^1=0,x^2<\phi\}.
\end{equation*}
When $m=0$, we just set $\phi=0$.
The main result in this section is the following reverse H\"older inequality.
\begin{theorem}\label{thm-halfspace}
Let $u\in W^{1,2}(B^{+}_R)$ be a weak solution to the following problem with $\lambda>0$:
\begin{equation*}
\begin{cases}
-\Delta u + \lambda u= 0 & \text{in }B^{+}_R,\\
\frac{\p u}{\p x^1}=0 & \text{on } \cN\cap B_R,\\
u=0 &\text{on } \cD\cap B_R,
\end{cases}
\end{equation*}
Then for any $p<\frac{2(m+2)}{m+1}$, we have $u\in W^{1,p}(B_{R/2}^{+})$ satisfying
\begin{equation*}
(U^p)^{1/p}_{B_{R/2}^{+}} \leq C(U^2)^{1/2}_{B_{R}^{+}},
\end{equation*}
where $C=C(d,p,M)$.
\end{theorem}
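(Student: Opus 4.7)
The strategy is to combine iterated Caccioppoli estimates in the tangentially \emph{flat} directions with an anisotropic Sobolev--Poincar\'e argument that lowers the effective dimension of the problem from $d$ to $m+2$. First I would scale so that $R=1$ and decompose the spatial variable as $x=(x^1,x^2,x'',x''')$, where $x''=(x^3,\dots,x^{m+2})\in\bR^m$ collects the rough tangential directions in which $\phi$ actually varies, and $x'''=(x^{m+3},\dots,x^d)\in\bR^{d-m-2}$ collects the flat tangential directions in which $\phi$, the Laplacian, and both $\cN$ and $\cD$ are translation invariant. Because of this invariance, each tangential derivative $D_{x'''}^{\alpha}u$ is again a $W^{1,2}_{\cD}$-weak solution of the same homogeneous mixed problem. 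A standard ladder of concentric half-balls with the mixed Caccioppoli inequality at each step then yields, for any fixed $k\ge 0$,
\[
\sum_{|\alpha|\le k}\bigl\|D_{x'''}^{\alpha}U\bigr\|_{L^2(B^+_{3/4})} \le C(k,M,\Lambda)\,\|U\|_{L^2(B^+_1)}.
\]

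Next I would invoke a Sobolev/Morrey embedding applied in the $x'''$-variables alone (with $k>(d-m-2)/2$) to trade this smoothness for pointwise control in $x'''$, so that in effect the problem is reduced to a family of mixed boundary-value problems on slices modeled on $\bR^{m+2}_+$ whose interfacial set is the $m$-dimensional Lipschitz graph of $\phi$. On such slices, the mixed Poincar\'e inequality of Lemma \ref{lem-poincare} together with the Sobolev embedding in effective dimension $m+2$ gives, after combining with Caccioppoli, a weak reverse H\"older estimate
\[
\Bigl(\fint_{B^+_{r}} U^2\Bigr)^{1/2} \le C\,\Bigl(\fint_{B^+_{2r}} U^{q}\Bigr)^{1/q}
\]
in which the factor $m+2$ (rather than $d$) in the Sobolev exponent is the crucial gain from the flat tangential smoothness. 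A Gehring-type self-improvement lemma then upgrades integrability to $L^{p}$ for any $p<2(m+2)/(m+1)$, this being the conjugate of the Sobolev exponent $2(m+2)/(m+3)$ in dimension $m+2$.

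The main obstacle is twofold. First, the anisotropic reduction must preserve both the homogeneous mixed boundary conditions and the critical Sobolev exponent in the effective dimension $m+2$; this rests on the exact translation invariance of the geometry in the $x'''$ directions and on a careful bookkeeping of the constant $C$ through the ladder of Caccioppoli iterations. Second, the Lipschitz graph $\phi$ can have arbitrarily large Lipschitz constant $M$. To handle this I would flatten $\Gamma$ via the bi-Lipschitz change of variables $(x^1,x^2,x'',x''')\mapsto(x^1,x^2-\phi(x''),x'',x''')$, which preserves the $x'''$-structure, turns $\Gamma$ into the flat set $\{x^1=x^2=0\}$, and replaces $-\Delta+\lambda$ by a uniformly elliptic divergence-form operator with bounded measurable coefficients depending only on the new $x''$. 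The Caccioppoli, tangential smoothness, and Sobolev steps all go through for this perturbed operator with constants depending only on $(d,M,\Lambda)$, yielding the sharp exponent $p<2(m+2)/(m+1)$.
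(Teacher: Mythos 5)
Your high-level plan---exploit translation invariance in the flat tangential directions $x'''=(x^{m+3},\dots,x^d)$, iterate Caccioppoli there, and thereby reduce the effective dimension to $m+2$---is exactly the paper's organizing idea. But the technical engine you supply for the reduced $(m+2)$-dimensional problem cannot produce the stated conclusion, for two reasons.

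First, and most seriously, a Gehring-type self-improvement starting from a reverse H\"older inequality of the form
\[
\Bigl(\fint_{B^+_r}U^2\Bigr)^{1/2}\le C\Bigl(\fint_{B^+_{2r}}U^{q}\Bigr)^{1/q},\qquad q<2,
\]
only upgrades integrability from $L^2$ to $L^{2+\epsi}$ for some small, uncontrolled $\epsi>0$ determined implicitly by the constant $C$. It does not and cannot reach a prescribed target exponent such as $2(m+2)/(m+1)$, nor does the Sobolev exponent $q$ on the right-hand side determine how far the bootstrap travels. The sharp integrability claimed in the theorem is a quantitative, structure-specific statement, not a consequence of abstract self-improvement. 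In the paper this step is instead carried out by invoking the optimal Besov-space estimate $B^{3/2}_{2,\infty}$ from \cite[Theorem~3.3]{BC} for the model mixed problem in $\bR^{m+2}_+$ with a Lipschitz graph separation (this is encapsulated in Lemma \ref{lem-halfspace-(m+2)d}), followed by the Besov--Sobolev embedding $B^{1/2}_{2,\infty}\hookrightarrow L^{p}$ for $p<2(m+2)/(m+1)$. Without an input of that caliber, your argument produces only $W^{1,2+\epsi}$ for small $\epsi$.

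Second, your proposed flattening $(x^1,x^2,x'',x''')\mapsto(x^1,x^2-\phi(x''),x'',x''')$ undercuts the very estimate you need. It converts the Laplacian into a divergence-form operator with (at best) Lipschitz, genuinely $x''$-dependent coefficients. For such an operator one does not have a ready-made sharp $W^{1,p}$ estimate near the flat edge $\{x^1=x^2=0\}$; establishing one is essentially the perturbation problem that the paper tackles separately in Section \ref{sec-W1p}, and even there it requires small BMO oscillation, not just boundedness. The paper avoids this circularity precisely by using \cite{BC}, whose Besov estimate is proved directly for the constant-coefficient Laplacian with the Lipschitz graph $\Gamma=\{x^1=0,\,x^2=\phi(x'')\}$. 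Your anisotropic-Sobolev and Caccioppoli bookkeeping in the $x'''$-directions mirrors what the paper does (see the display after \eqref{est-07261502}), but those steps are sound only once the $(m+2)$-dimensional sharp estimate is in hand; they cannot substitute for it.
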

The case $m=0$ was proved in \cite[Theorem~4.1]{CDL}. In the following, we first prove a lemma which contains the result when $m=d-2$. From this lemma, we can derive the case when $m$ is in between. In this section, we do not distinguish the geometric objects on $\bR^{m+2}$ or $\bR^d$.
\begin{lemma}\label{lem-halfspace-(m+2)d}
On $\bR^{m+2}$, let $u\in  W^{1,2}(B^{+}_R)$ be a weak solution to
\begin{equation}\label{eqn-07251828}
\begin{cases}
-\Delta u = f & \text{in }B^{+}_1,\\
\frac{\p u}{\p x^1}=0 & \text{on } \cN\cap B_1,\\
u=0 &\text{on } \cD\cap B_1,
\end{cases}
\end{equation}
where $f\in L^2(B^{+}_1)$. Then for any $p\in \big[2,\frac{2(m+2)}{m+1}\big)$, we have $u\in W^{1,p}(B^{+}_{1/2})$ and
\begin{equation*}
\norm{\nabla u}_{L^p(B^{+}_{1/2})} \leq C(m, p)( \norm{\nabla u}_{L^2(B^{+}_1)} + \norm{f}_{L^2(B^{+}_1)}).
\end{equation*}
\end{lemma}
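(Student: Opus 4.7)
The plan is to reduce the Lipschitz-interface estimate to the flat-interface case handled in \cite[Theorem~4.1]{CDL} via a bi-Lipschitz flattening of $\Gamma$, and then to upgrade the resulting cross-sectional bound into an ambient $L^p$ bound for $p < \frac{2(m+2)}{m+1}$. A useful guiding observation is that the exponent $\frac{2(m+2)}{m+1}$ is precisely the Sobolev conjugate of $\frac{2(m+2)}{m+3}$ in dimension $m+2$, which strongly suggests that the exponent ceiling emerges from a Sobolev-type interpolation after the flattening.

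Concretely, I would introduce the change of variables $\Psi(y^1,y^2,y') := (y^1, y^2+\phi(y'), y')$, with $y' = (y^3,\ldots,y^{m+2})$. This is bi-Lipschitz with constants depending only on $M$, maps the half-space onto itself, and flattens $\Gamma$ to $\{y^1=y^2=0\}$. The transformed function $v := u \circ \Psi$ then satisfies a mixed boundary value problem for a symmetric divergence-form operator $D_i(a_{ij}(y')D_j v) = \tilde f$, whose coefficients depend only on the tangential variables $y'$ (through $\nabla\phi$) and are bounded in terms of $M$. For each fixed $y'$, the cross-section $v(\cdot,\cdot,y')$ solves a $2$D mixed Dirichlet--Neumann problem on a half-disk with constant (in the slice) symmetric positive-definite coefficients $a(y')$ and flat interface at the origin; applying \cite[Theorem~4.1]{CDL} yields a cross-sectional reverse H\"older estimate for $\nabla_{y^1,y^2} v(\cdot,\cdot,y')$ with any exponent $p_0 < 4$, uniformly in $y'$.

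To upgrade this cross-sectional bound into the ambient $W^{1,p}$ estimate, I would control the tangential derivatives $D_{y'} v$ via tangential difference quotients. Such quotients preserve the mixed BVP structure (since $a$ does not depend on the transverse variables $y^1,y^2$) up to commutator terms that are bounded by $M \cdot |\nabla v|$, so the same cross-sectional reverse H\"older applies to difference quotients. Combining these tangential estimates with Fubini and the Sobolev embedding $W^{1,\frac{2(m+2)}{m+3}}(\bR^{m+2}) \hookrightarrow L^{\frac{2(m+2)}{m+1}}(\bR^{m+2})$, one promotes the cross-sectional $L^{p_0}$ bound into an ambient $L^p$ bound with $p < \frac{2(m+2)}{m+1}$.

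The main obstacle will be the tangential regularity step. Since $\phi$ is only Lipschitz, $\nabla\phi$ lies in $L^\infty$ but in no smaller BMO-type space, so tangential difference-quotient arguments produce commutator terms that cannot be made small by shrinking scales and must instead be handled by an iterative bootstrap. Carefully tracking these lower-order terms through the iteration, and ensuring that the exponent lands exactly on the Sobolev-conjugate value $\frac{2(m+2)}{m+1}$ rather than on a strictly smaller exponent, is the technical crux of the argument.
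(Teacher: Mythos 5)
Your approach is genuinely different from the paper's, but it has a gap that I don't see how to close. The paper's proof is quite short: extend $u$ by zero, multiply by a cutoff $\eta$ to get a global problem on $\bR^{m+2}_+$ for $u\eta$ with an $L^2$ right-hand side, then invoke \cite[Theorem~3.3]{BC}, which gives $\|\nabla(u\eta)\|_{B^{1/2}_{2,\infty}(\bR^{m+2}_+)}\lesssim\|f\eta-2\nabla u\cdot\nabla\eta-u\Delta\eta\|_{L^2}$ directly for the mixed problem with a Lipschitz interface, and finish with the Besov--Sobolev embedding $B^{1/2}_{2,\infty}(\bR^{m+2})\hookrightarrow L^p$ for $p<\tfrac{2(m+2)}{m+1}$. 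The exponent ceiling is therefore coming from a $B^{1/2}_{2,\infty}$ embedding, not the Sobolev conjugate $W^{1,q}\hookrightarrow L^{q^*}$ you identify in your opening observation; the numerics agree but the mechanism is different.

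The gap in your flattening-and-slicing plan is the tangential regularity step, and it is more serious than a ``technical crux.'' After the bi-Lipschitz map $\Psi$, the transformed coefficients $a_{ij}(y')$ depend on $\nabla\phi(y')\in L^\infty$, and the 2D slice sees, as forcing, terms like $D_j(a_{j2}D_2v)$ and $D_j(a_{jj}D_jv)$ for $j\ge3$. These involve $D_jD_2v$ and $D_j^2v$, i.e.\ second derivatives in the $y'$-directions, which a $W^{1,2}$ solution does not control. To control them you would differentiate (or take difference quotients of) the equation in $y'$, but $\delta_h^j v$ then solves an equation whose source involves $(\delta_h^j a_{ij})D_jv(\cdot+he_j)$, and $\delta_h^j a_{ij}$ is the difference quotient of a function of $\nabla\phi$; for merely Lipschitz $\phi$ this is unbounded (take $\phi$ with $\nabla\phi$ a sign function; then $\delta_h(\nabla\phi)$ has $L^2$-norm of order $h^{-1/2}$). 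So the commutator is not ``bounded by $M\cdot|\nabla v|$'' as asserted in your second paragraph, and your final paragraph, while honestly flagging that the commutator is not small at small scales, proposes a bootstrap that would need precisely the higher tangential regularity the $L^\infty$ coefficients preclude. This is why the paper cannot and does not use a slicing argument at this level; slicing is used in the proof of Theorem~\ref{thm-halfspace} for $0<m<d-2$, but there the slicing directions $\hat{x}=(x^{m+3},\ldots,x^d)$ are \emph{transverse} to the arguments of $\phi$, so the problem is genuinely translation invariant and Caccioppoli in $\hat{x}$ applies. That structural distinction is exactly what forces the present lemma to rely on the layer-potential-based Besov estimate of \cite{BC} rather than an elementary flattening.
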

\begin{proof}
The case when $m=0$ is proved in \cite[Lemma~4.3]{CDL}. Here we only consider the case when $m\geq 1$. In the following, we construct a problem on the half space $\bR^{m+2}_{+}$, which coincides with \eqref{eqn-07251828} in $B_{1/2}^{+}$. Hence the Besov space regularity in \cite[Theorem~3.3]{BC} applies. For this, first we take zero extension of $u$ on $\bR^{m+2}_{+}\setminus B_1^{+}$. Then let
$$\eta \in C^\infty_c(B_1),\quad \eta =1\,\,\text{in }B_{1/2},\quad \text{even in}\,\, x^1.$$
Now $u\eta\in W^{1,2}(\bR^{m+2}_{+})$, and solves
\begin{equation*}
\begin{cases}
-\Delta (u\eta) = f\eta - 2\nabla u\cdot\nabla\eta -u\Delta\eta & \text{in}\,\,\bR_+^{m+2},\\
\frac{\p (u\eta)}{\p x^1}=0 & \text{on}\,\, \cN,\\
u\eta=0 &\text{on}\,\, \cD.
\end{cases}
\end{equation*}
We have
\begin{align}
&\norm{\nabla u}_{L^p(B^{+}_{1/2})} = \norm{\nabla (u\eta)}_{L^p(B^{+}_{1/2})}\nonumber \\ &\lesssim \norm{\nabla (u\eta)}_{B^{1/2}_{2,\infty}(\bR^{m+2}_{+})}\label{est-07261420-1}\\
&\lesssim \norm{f\eta -2 \nabla u\cdot \nabla \eta - u\Delta\eta}_{L^2(\bR^{m+2}_{+})}\label{est-07261420-2}\\
&\lesssim \norm{f}_{L^2(B_1^{+})} + \norm{u}_{W^{1,2}(B_1^{+})}\nonumber\\
&\lesssim \norm{f}_{L^2(B_1^{+})} + \norm{\nabla u}_{L^2(B_1^{+})}\label{est-07261420-3}.
\end{align}
Here, \eqref{est-07261420-1} is due to the Besov-Sobolev embedding, where $p<\frac{2(m+2)}{m+1}$ is required, \cite[Theorem~3.3]{BC} is applied to obtain \eqref{est-07261420-2}, and the estimate \eqref{est-07261420-3} is obtained by applying the usual Poincar\'e inequality on Lipschitz domains with the homogeneous Dirichlet condition imposed on part of the boundary (see Lemma \ref{lem-poincare}).
\end{proof}
Now we turn to the proof of the main result of this section.
\begin{proof}[Proof of Theorem \ref{thm-halfspace}]
As mentioned before, we only need to prove when $0<m<d-2$. For simplicity, here we introduce the notation $x=(\bar{x},\hat{x})$, where $\bar{x}:=(x^1,\ldots,x^{m+2})$ and $\hat{x}:=(x^{m+3},\ldots,x^d)$, and for $p,q\in [1,\infty]$ we denote
$$
\|u\|_{L^p_{\bar{x}}L^q_{\hat{x}}}=\big\|\|u\|_{L^q_{\hat{x}}}\big\|_{L^p_{\bar{x}}}.
$$
The proof is similar to that of \cite[Theorem~4.1]{CDL}. By applying Agmon's idea as in the proof of \cite[Theorem~4.1]{CDL} and scaling, it suffices to prove when $\lambda=0, R=1$. Also, since the case when $p\leq 2$ is trivial, in the following we assume $2 < p < \frac{2(m+2)}{m+1}$.

We first rewrite \eqref{eqn-07251828} as an equation in the $\bar{x}$-variables: for every $|\hat{x}|<1/2$,
$$\begin{cases}
\Delta_{\bar{x}}u(\cdot,\hat{x})= -\Delta_{\hat{x}}u(\cdot,\hat{x}) & \text{in }B_{2/3}^{+},\\
\frac{\p u}{\p x^1}=0 & \text{on } \cN\cap B_{2/3},\\
u=0 &\text{on } \cD\cap B_{2/3}.
\end{cases}$$
Now, a properly rescaled version of Lemma \ref{lem-halfspace-(m+2)d} leads to
\begin{equation*}
\|D_{\bar{x}}u(\cdot,\hat{x})\|_{L^p(B^{+}_{1/2})} \lesssim \|D_{\bar{x}}u(\cdot,\hat{x})\|_{L^2(B_{2/3}^+)}
+\|\Delta_{\hat{x}}u(\cdot,\hat{x})\|_{L^2(B_{2/3}^+)},\quad\forall\  |\hat{x}|<1/2.
\end{equation*}
Taking $L^p$ norm for $\hat{x}\in B_{1/2}$, we obtain
\begin{equation}\label{est-07261502}
\|D_{\bar{x}}u\|_{L^p(B_{1/2}^{+})} \lesssim \|D_{\bar{x}}u\|_{L^p_{\hat{x}}L^2_{\bar{x}}(B_{3/4}^+)}
+\|\Delta_{\hat{x}}u\|_{L^p_{\hat{x}}L^2_{\bar{x}}(B_{3/4}^+)}.
\end{equation}
Due to the translation invariance in $\hat{x}$, we can differentiate both the equation and boundary conditions in the $\hat{x}$-direction. Hence we obtain the following estimate by applying the Caccioppoli inequality $k$ times:
\begin{equation*}
\norm{DD^k_{\hat{x}}u}_{L^2(B^{+}_s)} \leq \frac{C(d,k)}{|t-s|^k}\norm{Du}_{L^2(B_t^{+})}
\end{equation*}
for $0<s<t\leq 1$ and $k\in\{0,1,2,\ldots\}$. From this and the anisotropic Sobolev embedding (see, for instance, \cite[Sec. 18.12]{MR521808}),
\begin{equation*}
\|D_{\bar{x}}u\|_{L^2_{\bar{x}}L^p_{\hat{x}}(B^+_{r})}
+\|D_{\hat{x}}u\|_{L^{p}(B^+_{r})}
+\|D_{\hat{x}}^2u\|_{L^{p}(B^+_{r})}
\le C(d,p,r)\|Du\|_{L^2(B^+_1)},\quad \forall r\in(0,1),
\end{equation*}
where $p\in (2,2(m+2)/(m+1))$. Combining this, \eqref{est-07261502}, and the Minkowski inequality, we reach the desired estimate.
\end{proof}

\section{Proof of Theorem \ref{thm-wellposedness}: regularity of $W^{1,2}$-weak solutions}\label{sec-W1p}
The key step in proving Theorem \ref{thm-wellposedness} is the regularity result Proposition \ref{prop-regularity} stated below. The following equation with $b_i=\hat{b}_i=c=0$ will be discussed:
\begin{equation}\label{eqn-07282306}
\begin{cases}
L_\lambda^{(0)}u := D_i(a_{ij}D_j u)-\lambda u= f+ D_i f_i  & \text{in }\, \Omega,\\
a_{ij}D_j u n_i = f_i  n_i & \text{on }\, \cN,\\
u=0 & \text{on }\, \cD.
\end{cases}
\end{equation}
\begin{proposition}\label{prop-regularity}
For any $p\in (2,\frac{2(m+2)}{m+1})$, we can find positive constants $\gamma_0$ and $\theta_0$ depending on $(d,p,M,\Lambda)$, such that if Assumptions \ref{ass-RF} $(\gamma_0)$, \ref{ass-separation} $(\gamma_0,m)$, and \ref{ass-smallBMO} $(\theta_0)$ are satisfied, the following holds.
For any $W^{1,2}_{\cD}(\Omega)$ weak solution $u$ to \eqref{eqn-07282306} with $\lambda>0$ and $f_i, f\in L^p(\Omega)\cap L^2(\Omega)$,
 we have $u \in W^{1,p}_{\cD}(\Omega)$ and
\begin{equation}\label{est-no-lower-order}
\norm{U}_{L^p(\Omega)} \leq C \big(R_1^{d(1/p-1/2)}\norm{U}_{L^2(\Omega)} + \norm{F}_{L^p(\Omega)}\big).
\end{equation}
Furthermore, if we also assume $f\equiv 0$, then we can take $\lambda=0$. In this case, the following estimate holds:
\begin{equation*}
\norm{Du}_{L^p(\Omega)} \leq C\big(R_1^{d(1/p-1/2)}\norm{Du}_{L^2(\Omega)} + \norm{f_i}_{L^p(\Omega)}\big).
\end{equation*}
In the above, the constant $C$  depends on $d$, $p$, $M$, and $\Lambda$.
\end{proposition}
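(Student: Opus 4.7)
My plan is to follow the Fefferman--Stein sharp-function / Hardy--Littlewood maximal-function framework used for the $m=0$ case in \cite{CDL}, with the essential new ingredient being that the local reverse H\"older estimate from that paper (for a planar $\Gamma$) is replaced by Theorem \ref{thm-halfspace} (valid for a Lipschitz-graph $\Gamma$ of $m$ variables). At interior points and at boundary points far from $\Gamma$, the purely Dirichlet and purely conormal $W^{1,p}$ theory on Reifenberg-flat domains with partially BMO coefficients from \cite{DK11, DK12} applies directly, so the only genuinely new work is in a neighborhood of $\Gamma$.

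Fix $x_0 \in \overline{\Omega}$ and $r \in (0, c R_1)$ with $\dist(x_0,\Gamma)\le Cr$, and pick $z_0 \in \Gamma$ within distance $Cr$ of $x_0$. By Assumptions \ref{ass-RF}$(\gamma_0)$, \ref{ass-separation}$(\gamma_0,m)$, and \ref{ass-smallBMO}$(\theta_0)$, there is a coordinate system centered at $z_0$ in which $\Omega_r(z_0)$ lies within Hausdorff distance $\gamma_0 r$ of the half-ball $B_{r}^+$, $\Gamma\cap B_r$ is within $\gamma_0 r$ of the Lipschitz-graph separation used in Section \ref{sec-halfspace}, and $(a_{ij})_{\Omega_r(z_0)}$ is a constant matrix differing from $a_{ij}$ by $\theta_0$ in $L^1$-average. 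I would then decompose $u = w + v$, where $w$ solves the frozen-coefficient homogeneous problem on the model region $B_{r'}^+$ (with $r'$ slightly smaller than $r$) with the Lipschitz-graph $\Gamma$, matched to $u$ so that $v$ lies in the correct mixed $W^{1,2}_\cD$ class on an intrinsic subdomain. After a linear change of variables straightening the frozen matrix to the identity, Theorem \ref{thm-halfspace} applied to $w$ gives
\begin{equation*}
\biggl(\dashint_{B_{r/2}^+}(|Dw|+\sqrt{\lambda}|w|)^p\biggr)^{1/p} \le C\biggl(\dashint_{B_{r'}^+}(|Dw|+\sqrt{\lambda}|w|)^2\biggr)^{1/2},
\end{equation*}
while testing the equation for $v$ against $v$ itself and using Lemma \ref{lem-poincare} to absorb the $L^2$-norm of $v$ by that of $Dv$ yields
\begin{equation*}
\|Dv\|_{L^2(\Omega_{r/2}(z_0))}+\sqrt{\lambda}\|v\|_{L^2(\Omega_{r/2}(z_0))} \le \eta(\gamma_0,\theta_0)\|U\|_{L^2(\Omega_{Cr}(z_0))} + C\|F\|_{L^2(\Omega_{Cr}(z_0))},
\end{equation*}
with $\eta \to 0$ as $\gamma_0,\theta_0 \to 0$.

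Combining the two bounds produces, at every such $x_0$ and $r$, a mean-oscillation estimate
\begin{equation*}
\biggl(\dashint_{\Omega_{r/2}(x_0)} U^p\biggr)^{1/p} \le \epsilon \biggl(\dashint_{\Omega_{Cr}(x_0)} U^2\biggr)^{1/2} + C_\epsilon \biggl(\dashint_{\Omega_{Cr}(x_0)} F^2\biggr)^{1/2},
\end{equation*}
with $\epsilon$ as small as desired by choosing $\gamma_0,\theta_0$ small. Feeding this into the level-set / good-$\lambda$ argument as in \cite[\S 5]{CDL} or \cite{DK11}, and taking $\epsilon$ small enough to absorb the $U$-term, gives the global estimate \eqref{est-no-lower-order}; the term $R_1^{d(1/p-1/2)}\|U\|_{L^2}$ arises because the oscillation inequality only holds on scales $r \lesssim R_1$, with larger scales handled by H\"older's inequality. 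The $f\equiv 0$, $\lambda=0$ claim follows by exactly the same scheme, since both Theorem \ref{thm-halfspace} and the energy estimate for $v$ remain valid without the zeroth-order term.

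The main technical obstacle is the perturbation estimate for $v$: since $\partial\Omega$ is only Reifenberg flat, no classical trace on $\partial\Omega$ is available, so $w$ must be constructed intrinsically (not by posing a BVP on the literal half-space) so that $v = u - w$ genuinely lies in the correct $W^{1,2}_\cD$ class on an intrinsic subdomain, and the $\gamma_0$-deviations of $\partial\Omega$ from a hyperplane and of $\Gamma$ from the Lipschitz graph each must contribute only $O(\gamma_0^\kappa)$ error (for some $\kappa>0$) in the energy comparison. Verifying this geometric compatibility between the weak Reifenberg structure of $\partial\Omega$ and the stronger Lipschitz-graph structure of $\Gamma$, for which Lemma \ref{lem-poincare} is essential, is the most delicate point of the argument.
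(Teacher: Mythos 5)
Your high-level architecture is the same as the paper's: near $\Gamma$, decompose the solution into a ``good'' part that solves a frozen-coefficient homogeneous mixed problem on a half-space with a Lipschitz-graph separation (so that Theorem \ref{thm-halfspace} delivers the $L^{2(m+2)/(m+1)-\epsi}$ integrability), plus an ``error'' part that is small in $L^2$ by an amount $O(\gamma_0^\kappa+\theta_0^\kappa)$, and then run the Caffarelli--Peral / crawling-of-the-ink-spots level-set machine from \cite[\S 5]{CDL}. You also correctly identify where $R_1^{d(1/p-1/2)}$ comes from. So the framework is right.

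The genuine gap is exactly the step you flag as ``the most delicate point'' and then do not carry out. You propose constructing the good part $w$ directly as a solution of the frozen half-space problem ``matched to $u$ so that $v=u-w$ lies in the correct $W^{1,2}_\cD$ class.'' On a Reifenberg-flat $\partial\Omega$ there is no trace on which to match, and even locally the mixed boundary condition on the $\{x^1=0\}$ face is incompatible with simply prescribing $w=u$ on the remaining lateral boundary; absent this you cannot test the equation for $v$ against $v$ itself to produce the smallness $\eta(\gamma_0,\theta_0)$. The paper sidesteps this entirely by inverting the order: it builds a cutoff $\chi$ (vanishing in a thin $\gamma R$-strip near $\{x^1\approx 0, x^2\approx\phi\}$) together with an even reflection across $\{x^1=\gamma R/4\}$, so that $\chi u$ satisfies a concrete inhomogeneous frozen-coefficient problem \eqref{eqn-08041616} on the Lipschitz subdomain $\Omega_{R/4}^+$ with an honest Dirichlet boundary on $\partial\Omega_{R/4}^+\setminus\Gamma^-$; it then solves for the \emph{error} $w$ by Lax--Milgram with zero boundary data there (no trace of $u$ required), and sets $v:=\chi u - w$, which automatically solves the homogeneous frozen problem. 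Two further ingredients you omit entirely are needed to get the quantitative smallness of the error: (i) the Whitney-type cover $\cD_{\rm grid}$ of the $\gamma R$-strip along the Lipschitz graph $\Gamma$, used together with Lemma \ref{lem-poincare} to control $\|uD\chi\|_{L^2}$ and $\|wD\chi\|_{L^2}$ by $\gamma^{1/(2\mu')}\|Du\|_{L^2}$ in \eqref{est-08041538-2} and \eqref{est-08062324}; and (ii) the reverse-H\"older improvement (Proposition \ref{prop-reverse-holder}) that upgrades $W^{1,2}$ to $W^{1,2\mu}$, which is what converts the measure-theoretic smallness $|\supp(1-\chi)\cap\Omega_{R/4}|\lesssim \gamma R^d$ and the BMO smallness $\theta_0$ into the actual factor $(\gamma^{1/(2\mu')}+\theta^{1/(2\mu')})$ in front of $\|U\|_{L^2}$. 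Without (i) and (ii) the error estimate you assert has no proof, and the $\epsilon$ in your mean-oscillation inequality cannot be made small.
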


We give the following corollary which will be useful for the inhomogeneous boundary condition case later.
\begin{corollary}\label{cor-10182245}
Let $0<r<R\le \text{diam}(\Omega)$.
Consider a harmonic function in $\Omega_R$ with the corresponding mixed boundary conditions on $\p\Omega\cap B_R$. Under the assumptions for $p,\Omega$, and $\Gamma$ as in Proposition \ref{prop-regularity}, we have for any $r<R$,
\begin{equation*}
\norm{Du}_{L^p(\Omega_r)}\le  C(R-r)^{-d/p'}\norm{Du}_{L^1(\Omega_R)},
\end{equation*}
where $p'$ satisfies $1/p+1/p'=1$ and $C=C(d,p,M,\Lambda,R_1)$.
\end{corollary}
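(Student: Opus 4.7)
I would approach this as a two-stage self-improvement argument: first upgrade Proposition \ref{prop-regularity} from a global to a localized $L^p$--$L^2$ gradient estimate on concentric annuli, then interpolate to bring the $L^1$ norm onto the right-hand side.

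For the first step, I would fix $r\le r_1<r_2\le R$ and pick a standard cutoff $\zeta\in C_c^\infty(B_{r_2})$ with $\zeta\equiv 1$ on $B_{r_1}$ and $|D\zeta|\le C(r_2-r_1)^{-1}$. Setting $v=u\zeta$ and extending by zero, one checks that $v\in W^{1,2}_\cD(\Omega)$ weakly solves \eqref{eqn-07282306} with $f=Du\cdot D\zeta$ and $f_i=uD_i\zeta$. Since the cutoff generates a genuine non-divergence contribution $f$, I cannot invoke the $\lambda=0$ case of Proposition \ref{prop-regularity} directly. Instead I would rewrite the equation as $L_\lambda v=(f-\lambda v)+D_if_i$ for some $\lambda>0$ and apply the first part of Proposition \ref{prop-regularity}. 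The resulting right-hand side contains the contribution $\|f\|_{L^p}/\sqrt{\lambda}\le C(\sqrt{\lambda}(r_2-r_1))^{-1}\|Du\|_{L^p(\Omega_{r_2})}$, so selecting $\sqrt{\lambda}=A(r_2-r_1)^{-1}$ with $A$ large makes the coefficient of $\|Du\|_{L^p(\Omega_{r_2})}$ strictly smaller than one. After using Lemma \ref{lem-poincare} to convert the arising $\|u\|_{L^q}$ terms into $\|Du\|_{L^q}$ terms and applying a standard Giaquinta-type iteration to absorb the remaining $\|Du\|_{L^p(\Omega_{r_2})}$ part, I expect the cleaned-up estimate
\begin{equation*}
\|Du\|_{L^p(\Omega_{r_1})}\le \frac{C}{(r_2-r_1)^{d(1/2-1/p)}}\|Du\|_{L^2(\Omega_{r_2})}.
\end{equation*}

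For the second step, H\"older interpolation gives $\|Du\|_{L^2(\Omega_{r_2})}\le\|Du\|_{L^p(\Omega_{r_2})}^\theta\|Du\|_{L^1(\Omega_{r_2})}^{1-\theta}$ with $\theta=p/(2(p-1))$. Substituting into the localized estimate and applying Young's inequality with a parameter $\epsilon$ yields
\begin{equation*}
\|Du\|_{L^p(\Omega_{r_1})}\le\epsilon\|Du\|_{L^p(\Omega_{r_2})}+C_\epsilon(r_2-r_1)^{-d/p'}\|Du\|_{L^1(\Omega_{r_2})},
\end{equation*}
where the bookkeeping collapses via the identity $d(1/2-1/p)/(1-\theta)=d(p-1)/p=d/p'$. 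Taking $\epsilon=1/2$ and applying a second Giaquinta iteration to $\rho\mapsto\|Du\|_{L^p(\Omega_\rho)}$ along a geometric sequence $r_k\nearrow R$ absorbs the $\epsilon$-term and produces the stated inequality.

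The hardest step is the first: the appearance of $Du$ inside $f$ forces the scale-dependent choice $\sqrt{\lambda}\sim (r_2-r_1)^{-1}$, and one must carefully track the several error terms produced by the cutoff and the $\lambda$-shift so that the final exponent is the natural $d(1/2-1/p)$ rather than something worse. A secondary bookkeeping point is that Lemma \ref{lem-poincare} must be applicable on the relevant subballs, which requires $\cD\cap B_R(x_0)$ to contain a set of the form specified in that lemma; the $R_1$-dependence coming both from this Poincar\'e step and from the factor $R_1^{d(1/p-1/2)}$ in Proposition \ref{prop-regularity} is absorbed into the constant $C=C(d,p,M,\Lambda,R_1)$ appearing in the conclusion.
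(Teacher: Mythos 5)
Your overall plan — localize Proposition \ref{prop-regularity} to an $L^2$--$L^p$ annular estimate, remove $\|u\|$ via Poincar\'e (Lemma \ref{lem-poincare}), then interpolate $L^2\to L^1$ and run a Giaquinta-type iteration — is the same high-level strategy as the paper's sketch, and your exponent bookkeeping in the second stage (with $\theta=p/(2(p-1))$ and $d(1/2-1/p)/(1-\theta)=d/p'$) is correct.

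However, there is a concrete gap in your first step, in the $\lambda$-shift. Writing $L_\lambda v = (f-\lambda v)+D_if_i$ and applying Proposition \ref{prop-regularity}(i) puts not only $\frac{1}{\sqrt\lambda}\|Du\cdot D\zeta\|_{L^p}$ on the right-hand side but also $\frac{1}{\sqrt\lambda}\|\lambda v\|_{L^p}=\sqrt\lambda\|v\|_{L^p}$, which is exactly the second half of $\|U\|_{L^p}$ on the left, multiplied by the (not small) constant $C$ from the proposition; it cannot be absorbed. Poincar\'e does not rescue this: the correct scale for Poincar\'e on $v=u\zeta$ is $r_2$ (not $r_2-r_1$, since $v$ vanishes only on $\partial B_{r_2}$ and on $\cD$), so $\sqrt\lambda\|v\|_{L^p}\lesssim \sqrt\lambda\,r_2\|Dv\|_{L^p}$. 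Taking $\sqrt\lambda\sim A/(r_2-r_1)$ with $A$ large, as you propose, makes this coefficient $\sim Ar_2/(r_2-r_1)$ large; taking $\sqrt\lambda$ small enough to absorb it forces $\frac{1}{\sqrt\lambda(r_2-r_1)}\gtrsim r_2/(r_2-r_1)$, whose hole-filling coefficient $\frac{r_2/(r_2-r_1)}{1+r_2/(r_2-r_1)}\to 1$ as $r_1\to r_2$, so the iteration lemma fails. In short, no choice of $\lambda$ makes both the $\frac{1}{\sqrt\lambda}|f|$ term and the $\sqrt\lambda|v|$ term simultaneously harmless in the single-cutoff formulation; the cutoff scale ($r_2-r_1$) and the Poincar\'e scale ($r_2$) do not match.

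The paper's sketch fixes this mismatch by replacing the single cutoff with a covering of $\Omega_r$ by balls of radius $(R-r)/2$, so that for each ball the localization, Poincar\'e, and absorption/hole-filling are all at the same scale $\sim(R-r)$ (yielding a uniform $\theta<1$), and then one sums over the covering. If you want to keep a single cutoff, you should instead either (i) pass through the divergence construction used in Theorem \ref{thm-wellposedness}(ii) to turn $f=Du\cdot D\zeta$ into a divergence-form term so that the $\lambda=0$ case of the proposition applies, or (ii) carry out the level-set machinery directly on $\Omega_R$ at scales $\ll R-r$ rather than relying on a cutoff of the global proposition. As written, the absorption step is not justified.
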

The proof is standard, which we will only sketch here. We may certainly assume that $r\ge R/2$. By using a covering argument (with balls of radius $(R-r)/2$ centered at, say $x_1,\ldots,x_k\in \Omega_r$), we first localize the result in Proposition \ref{prop-regularity} to obtain an $L^2-L^p$ estimate: for $j=1,\ldots,k$,
\begin{equation*}
 (|Du|^p)^{1/p}_{\Omega_{(R-r)/4}(x_j)} \leq C\big( (|Du|^2)^{1/2}_{\Omega_{(R-r)/2}(x_j)} + (|u|^2)^{1/2}_{\Omega_{(R-r)/2}(x_j)}\big).
\end{equation*}
To remove the $L^2$-norm of $u$ on the right-hand side, we apply the Poincar\'e inequality. When $\Omega_{(R-r)/2}(x_j)\cap \cD=\emptyset$, we replace $u$ with $u-(u)_{\Omega_{(R-r)/2}(x_j)}$, which locally satisfies the same equation with the conormal boundary condition, and then apply the usual Poincar\'e inequality. Otherwise, we need to apply the Poincar\'e inequality in Lemma \ref{lem-poincare}. Finally, to replace $\norm{Du}_{L^2}$ with $\norm{Du}_{L^1}$, we use H\"older's inequality and an iteration argument. Summing in $j$, we get the desired estimate.

The remainder of this section will be mostly devoted to the proof of Proposition \ref{prop-regularity}. We first sketch the idea. By constructing a cut-off function and applying a reflection technique, at all small scales we decompose the solution into two parts (Lemma \ref{lem-decomposition}). One part (up to rotation) solves the problem in Theorem \ref{thm-halfspace}, hence can reach the optimal $W^{1,\frac{2(m+2)}{m+1}-\epsi}$-regularity. The other part deals with all the perturbation terms measured by a $W^{1,2}$-estimate.

Next we ``interpolate'' these two part to reach the regularity of $u$ in between, by applying the level set argument introduced in \cite{CP}. The key idea is to use a measure theoretical lemma called ``crawling of the ink spots'' in \cite{KS}: from the decomposition and a Chebyshev-type inequality, we first deduce a local property at certain small scales $R$, regarding the level set of the maximal function (Lemma \ref{lem-levelset-local}). Then the ``crawling of the ink spots'' lemma leads to a global decay estimate of the measure of the level sets (Corollary \ref{cor-levelset-global}). From this decay estimate, the desired $L^p$-estimate can be obtained by an integral representation of the $L^p$ norm and the Hardy-Littlewood maximal function theorem.

The proof follows similar steps as in \cite[Section~5]{CDL}, where detailed computation can be found. Essentially the new ingredient here is the construction of a cut-off function when proving Lemma \ref{lem-decomposition}.
\subsection{A reverse H\"older inequalities}
Using the local Sobolev-Poincar\'e inequality in Lemma \ref{lem-poincare} and Gehring's Lemma, we first improve the regularity of a weak solution from $W^{1,2}$ to $W^{1,2+\epsi}$.
\begin{proposition}[Reverse H\"older's inequality]		\label{prop-reverse-holder}
Let $\gamma\in (0, 1/48]$, $p>2$, and the integer $m\in[0,d-2]$. Assume on $\bR^d$, $\Omega$, and $\Gamma$ satisfy Assumptions \ref{ass-RF} $(\gamma)$ and \ref{ass-separation} $(\gamma,m)$, and the function $u\in W^{1,2}_\cD(\Omega)$ satisfies \eqref{eqn-07282306} with $f_i, f\in L^p(\Omega)\cap L^2(\Omega)$.
Then there exist constants $p_0\in (2,p)$ and $C>0$, depending only on $d$, $p$, M, and $\Lambda$, such that for any $x_0\in \bR^d$ and $R\in (0, R_1]$, the following hold. When $\lambda>0$, we have
$$
\big(\overline{U}^{p_0}\big)^{1/p_0}_{B_{R/2}(x_0)}\le C \big(\overline{U}^{2}\big)^{1/2}_{B_{R}(x_0)}+C\big(\overline{F}^{p_0}\big)^{1/p_0}_{B_{R}(x_0)}.
$$
When $\lambda=0$ and $f\equiv 0$, we have
$$
\big(|D\overline{u}|^{p_0}\big)^{1/p_0}_{B_{R/2}(x_0)}\le C \big(|D\overline{u}|^{2}\big)^{1/2}_{B_{R}(x_0)}+C\big(|\overline{f_i}|^{p_0}\big)^{1/p_0}_{B_{R}(x_0)},
$$
where $\overline{U}$, $\overline{F}$, $D\overline{u}$, and $\overline{f_i}$ are the zero extensions of $U$, $F$, $Du$, and $f_i$ to $\bR^d$.
\end{proposition}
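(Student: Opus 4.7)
The plan is the classical Meyers--Gehring route: establish a \emph{weak} reverse H\"older inequality
$$
\bigl(\overline{U}^{2}\bigr)^{1/2}_{B_{R/2}(x_0)}\le C\bigl(\overline{U}^{q}\bigr)^{1/q}_{B_{R}(x_0)}+C\bigl(\overline{F}^{2}\bigr)^{1/2}_{B_{R}(x_0)}
$$
for some $q\in(1,2)$ at every $x_0\in\bR^d$ and $R\in(0,R_1]$, and then invoke Gehring's lemma (in Giaquinta's form, which allows an inhomogeneous right-hand side and operates on concentric balls) to produce $p_0=p_0(d,\Lambda,M)>2$, which we can shrink if necessary so that $p_0<p$. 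Because all quantities are zero-extended to $\bR^d$, we only need to consider $x_0$ with $B_R(x_0)\cap\Omega\neq\emptyset$; the inequality is trivial otherwise.

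\textbf{Caccioppoli step.} With a standard cutoff $\eta\in C_c^\infty(B_R(x_0))$, $\eta\equiv1$ on $B_{R/2}(x_0)$ and $|D\eta|\lesssim R^{-1}$, we distinguish three geometric situations and choose the test function accordingly. If $B_R(x_0)\cap\cD=\emptyset$, we plug $(u-c)\eta^{2}$ with $c=(u)_{\Omega_R(x_0)}$ into Definition \ref{def3.16}, which is admissible since the test function lies in $W^{1,2}_{\cD}(\Omega)$ locally. If $B_R(x_0)\cap\cD\neq\emptyset$, we use $u\eta^{2}\in W^{1,2}_{\cD}(\Omega)$ directly. The purely interior case is standard. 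In all three cases, ellipticity, Young's inequality, and the bound $\lambda u\cdot u\eta^2\ge 0$ absorb $\int |Du|^{2}\eta^{2}+\lambda u^2\eta^2$ into the left-hand side and produce
$$
\int_{B_{R/2}}|Du|^{2}+\lambda u^{2}\,dx\lesssim \frac{1}{R^{2}}\int_{B_{R}}|u-c|^{2}\,dx+\int_{B_{R}}\bigl(|f_i|^{2}+\lambda^{-1}|f|^{2}\bigr)\,dx,
$$
where $c=0$ in situation (ii) and $c=(u)_{\Omega_R(x_0)}$ otherwise.

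\textbf{Sobolev--Poincar\'e and Gehring.} To the term $R^{-2}\int|u-c|^{2}$ we apply Sobolev--Poincar\'e with exponent $q_{*}=2d/(d+2)<2$. In situations with $c=(u)_{\Omega_R(x_0)}$ the usual Sobolev--Poincar\'e applies, using that small Reifenberg-flat domains are $W^{1,p}$-extension domains \cite{LMS}. When $c=0$, Assumptions \ref{ass-RF}$(\gamma_0)$ and \ref{ass-separation}$(\gamma_0,m)$ guarantee the hypotheses of Lemma \ref{lem-poincare}: within $B_R(x_0)$ the portion $\cD\cap B_R(x_0)$ contains (up to a $\gamma_0 R$ margin) essentially half of $\p\Omega\cap B_R(x_0)$ cut by the Lipschitz graph defining $\Gamma$, so one finds a concentric ball $B_{\alpha R}(z_0)\subset B_R(x_0)$ with $\p\Omega\cap B_{\alpha R}(z_0)\subset\cD$ for some $\alpha=\alpha(d,M)>0$; Lemma \ref{lem-poincare} then gives $\|u\|_{L^{2}(\Omega_R)}\lesssim R\|Du\|_{L^{2}(\Omega_{2R})}$ without mean-subtraction. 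Combining with the Caccioppoli estimate on a shrunken ball and redistributing powers yields the weak reverse H\"older bound displayed in the \textbf{Strategy} paragraph, to which Gehring's lemma applies. The second statement (with $\lambda=0$ and $f\equiv0$) is obtained by the identical argument with every $\sqrt\lambda\,|u|$ term suppressed.

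\textbf{Main obstacle.} The only non-routine point is the geometric construction of the interior ball $B_{\alpha R}(z_0)$ in situation (ii), which is precisely where both Reifenberg flatness of $\p\Omega$ and the $(\gamma_0,m)$-flat separation of $\Gamma$ are used in a quantitative way; choosing $\gamma_0$ sufficiently small relative to $M$ ensures that $\alpha$ (and hence the constant in Lemma \ref{lem-poincare}) depends only on $d$ and $M$. Everything else is bookkeeping with zero extensions.
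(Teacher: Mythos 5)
Your proposal takes exactly the route the paper takes (it simply cites \cite[Lemma~3.4]{CDL}, whose proof is the Caccioppoli--Sobolev--Poincar\'e--Gehring scheme you describe), so there is no structural disagreement. A few pieces of ``bookkeeping'' that you wave at are, however, the places where the argument actually has content, and as written two of them are not quite right.

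First, in the conormal case (where you subtract $c=(u)_{\Omega_R(x_0)}$) the zero-order term in the Caccioppoli computation is $\lambda\int u(u-c)\eta^{2}$, not $\lambda\int u^{2}\eta^{2}$, so the assertion ``$\lambda u\cdot u\eta^{2}\ge 0$'' does not apply. Writing $u(u-c)=\tfrac12\bigl(u^{2}+(u-c)^{2}-c^{2}\bigr)$ you do get $\tfrac{\lambda}{2}\int u^{2}\eta^{2}$ on the left, but at the cost of an extra $\tfrac{\lambda}{2}c^{2}\int\eta^{2}$ on the right, which your displayed Caccioppoli inequality omits. This term is not controlled by $R^{-2}\int|u-c|^{2}$; it must instead be handled by Jensen, $\lambda c^{2}\le\lambda\bigl(\dashint_{\Omega_R}|u|^{q}\bigr)^{2/q}=\bigl(\dashint_{\Omega_R}(\sqrt\lambda|u|)^{q}\bigr)^{2/q}\le\bigl(\dashint_{\Omega_R}U^{q}\bigr)^{2/q}$, which is exactly the weak reverse H\"older exponent $q<2$ you need. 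Without this observation the $\sqrt\lambda|u|$ part of $U$ is not recovered on the left-hand side, and the Gehring step fails.

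Second, Lemma \ref{lem-poincare} as quoted gives $\|u\|_{L^{2}(\Omega_{R})}\lesssim R\|Du\|_{L^{2}(\Omega_{2R})}$, which is an $L^{2}$--$L^{2}$ estimate and is not sufficient for a weak reverse H\"older: you need an $L^{2}$--$L^{q}$ Sobolev--Poincar\'e for some $q<2$ in the Dirichlet-touching case as well, mirroring what you invoke in the mean-subtraction case. This follows by combining the Sobolev inequality on the extension domain with an $L^{q}$ version of the same Poincar\'e lemma (which the proof in \cite{CDL} does provide), but as stated your argument only uses the $L^{2}$ version and does not close the loop. Third, the dichotomy ``$B_R(x_0)\cap\cD=\emptyset$'' versus ``$B_R(x_0)\cap\cD\ne\emptyset$'' is too coarse: when $\cD\cap B_R(x_0)$ is a thin sliver near $\partial B_R(x_0)$ one cannot produce the interior ball required by Lemma \ref{lem-poincare} with $\alpha$ depending only on $(d,M)$. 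The standard fix is to split at the half-scale, say $\cD\cap B_{R/2}(x_0)=\emptyset$ versus $\ne\emptyset$, and adjust the support of $\eta$ accordingly; Gehring's lemma is indifferent to the resulting fixed change of radii. None of these affect the overall strategy, which is the one the paper intends.
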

The proof can be simply adapted from the one of \cite[Lemma~3.4]{CDL}, which we omit here.
\subsection{Decomposition of solutions}
In this subsection, we prove the following lemma.
\begin{lemma}\label{lem-decomposition}
Suppose that $u\in W^{1,2}_{\cD}(\Omega)$ satisfies \eqref{eqn-07282306}
with $\lambda>0$ and $f_i,f\in L^p(\Omega)\cap L^2(\Omega)$, where $p>2$.
Then under Assumptions \ref{ass-RF} $(\gamma)$, \ref{ass-separation} $(\gamma,m)$, and \ref{ass-smallBMO} $(\theta)$ with $\gamma<1/(32\sqrt{d+3})$ and $\theta\in (0,1)$, for any $x_0 \in \overline{\Omega}$ and $R<R_1$, there exist nonnegative functions $W, V \in L^2(\Omega_{R/32}(x_0))$ such that
$$U \leq W+V \quad \text{in } \Omega_{R/32}(x_0).$$
Moreover, we have for any $q < \frac{2(m+2)}{m+1}$,
\begin{align}
(W^2)^{1/2}_{\Omega_{R/32}(x_0)} &\leq C\big( (\theta^{\frac{1}{2\mu'}}+ \gamma^{\frac{1}{2\mu'}})(U^2)^{1/2}_{\Omega_R(x_0)} + (F^{2\mu})^{\frac{1}{2\mu}}_{\Omega_R(x_0)}\big),\label{eqn-est-W}\\
(V^q)^{1/q}_{\Omega_{R/32}(x_0)} &\leq C\big( (U^2)^{1/2}_{\Omega_R(x_0)} + (F^{2\mu})^{\frac{1}{2\mu}}_{\Omega_R(x_0)}\big).\label{eqn-est-V}
\end{align}
Here $\mu$ is a constant satisfying $2\mu=p_0$, where $p_0=p_0(d,p,M,\Lambda)>2$ comes from Proposition \ref{prop-reverse-holder}, and $\mu'$ satisfies $1/\mu+1/\mu'=1$.
The constant $C$ only depends on $d$, $p$, $q$, M, and $\Lambda$.
\end{lemma}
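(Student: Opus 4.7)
\medskip
\noindent\textbf{Proof proposal.} The plan is to follow the decomposition/freezing strategy of \cite{CDL} used for the case $m=0$, but with a new cut-off function that is adapted to the Lipschitz-graph structure of $\Gamma$ rather than to a $(d-2)$-plane. Working in the local coordinate system provided by Assumption \ref{ass-RF} and Assumption \ref{ass-separation}, we may assume $x_0=0$, so that $\Omega_R$ lies $\gamma R$-close to $B_R^+$ and $\Gamma\cap B_R$ lies $\gamma R$-close to $\{x^1=0,\;x^2=\phi(x^3,\dots,x^{m+2})\}$. Write $\bar a:=(a_{ij})_{\Omega_R}$ and let $\bar L$ denote the associated constant-coefficient operator. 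Following the strategy in \cite{CDL}, after an affine change of variables that turns $\bar a$ into the identity (which only distorts the Lipschitz constant of $\phi$ by a fixed factor depending on $\Lambda$), we can reduce to comparing $u$ with a harmonic function on a half-ball.

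The new ingredient is the cut-off. For $m\ge 1$, fix a smooth $\chi:\mathbb R\to[0,1]$ with $\chi\equiv 0$ on $(-\infty,2\sqrt d\,\gamma R]$, $\chi\equiv 1$ on $[4\sqrt d\,\gamma R,\infty)$, $|\chi'|\lesssim (\gamma R)^{-1}$, and set
\begin{equation*}
\eta(x):=\chi\!\bigl(x^2-\phi(x^3,\dots,x^{m+2})\bigr).
\end{equation*}
(For $m=0$ one takes $\eta(x):=\chi(x^2)$.) Because $\phi$ is $M$-Lipschitz, $|\nabla\eta|\lesssim (1+M)(\gamma R)^{-1}$, and the support of $\nabla\eta$ is contained in a ``slab'' of thickness $\lesssim \gamma R$ around $\Gamma$. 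The crucial property, given the $\gamma R$-closeness in Assumption \ref{ass-separation}, is that $\partial\Omega\cap\{\eta>0\}\cap B_{R/8}\subset\cD$, so that multiplying $u$ by $\eta$ effectively kills the $\cN$-boundary data and $u\eta$ has zero Dirichlet trace on the portion of $\partial\Omega$ it touches.

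Next, let $v$ be the $\bar L_\lambda$-harmonic function on the idealized half-ball $B_{R/8}^+$ with the model mixed boundary conditions of Theorem \ref{thm-halfspace} (zero Dirichlet on the Lipschitz-graph side, zero conormal on the other) and equal to $u\eta$ on $\partial B_{R/8}\cap\{x^1>0\}$; after the affine flattening of $\bar a$ and an even reflection trick in $x^1$, Theorem \ref{thm-halfspace} yields an $L^q$ estimate for $V:=|Dv|+\sqrt\lambda|v|$ with any $q<2(m+2)/(m+1)$, which will be the estimate \eqref{eqn-est-V}. Let $w:=u-v$. Then $w\in W^{1,2}_{\cD}$ locally, and testing $w$ against itself in the weak formulation yields, schematically,
\begin{equation*}
\|Dw\|_{L^2(\Omega_{R/8})}^2\lesssim \int_{\Omega_R}|a-\bar a|^2|Du|^2+\int_{\Omega_R}F^2+\text{(cut-off and geometric errors)}.
\end{equation*}
The first term is handled by H\"older's inequality with exponent $\mu=p_0/2$ and the reverse H\"older bound from Proposition \ref{prop-reverse-holder}, yielding the $\theta^{1/\mu'}$-factor via Assumption \ref{ass-smallBMO}. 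The ``cut-off error'' $|\nabla\eta|^2|u|^2$ is supported on a set of measure $\lesssim \gamma R^d$; since this is exactly the scaling of the $\gamma$ bound provided by Assumptions \ref{ass-RF} and \ref{ass-separation}, an analogous H\"older/reverse-H\"older step yields the $\gamma^{1/\mu'}$ factor. Combined with the Caccioppoli-type estimate and Poincar\'e's inequality (Lemma \ref{lem-poincare}) near the Dirichlet portion, this produces \eqref{eqn-est-W}.

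The main obstacle is the geometric bookkeeping in step two: verifying that the cut-off $\eta$ just defined is genuinely compatible with both the Reifenberg flatness of $\partial\Omega$ \emph{and} the graphical perturbation of $\Gamma$, so that $u\eta$ (a) has the correct trace properties on the actual, non-flat $\partial\Omega$ to legitimately compare with $v$ on the idealized flat half-ball, and (b) the loss from $|\nabla\eta|$ gives exactly a factor of $\gamma^{1/\mu'}$ after integration, rather than something larger that would depend on $M$ unfavorably. Once this cut-off is in hand, the remainder of the argument is essentially the same level-set/Gehring/Vitali scheme carried out in \cite[Section~5]{CDL}, now with the improved exponent $2(m+2)/(m+1)$ inherited from Theorem \ref{thm-halfspace}.
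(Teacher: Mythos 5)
Your high-level plan matches the paper's (freeze the coefficients, cut off near the interface, compare with the half-space model from Theorem~\ref{thm-halfspace}, and extract the $\theta^{1/(2\mu')}$- and $\gamma^{1/(2\mu')}$-factors via Proposition~\ref{prop-reverse-holder}). However, there is a genuine gap in the construction of the cut-off function, and it is not a cosmetic one: it makes the decomposition fail.

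You define $\eta(x)=\chi(x^2-\phi(x^3,\dots,x^{m+2}))$ with $\chi\equiv 0$ on $(-\infty,2\sqrt d\,\gamma R]$ and $\chi\equiv 1$ on $[4\sqrt d\,\gamma R,\infty)$, so that $\eta\equiv 0$ on the entire conormal side $\{x^2\le\phi+2\sqrt d\,\gamma R\}$ and $\eta\equiv 1$ on the Dirichlet side. This has two problems. First, the sign is reversed: the paper's cut-off $\chi$ vanishes on $\{x^2>\phi-\gamma R/2\}\cap\{x^1<\gamma R/2\}$, i.e.\ near $\cD$, not near $\cN$. The point of the paper's choice is that $\chi u$ then vanishes identically near $\cD$, so it has a clean zero Dirichlet trace on the artificial flat boundary $\Gamma^+$, while it equals $u$ on the conormal side and hence keeps the mixed structure needed to invoke Theorem~\ref{thm-halfspace}. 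With your $\eta$, the function $u\eta$ has zero trace on \emph{all} of $\p\Omega$ (zero on $\cN$ because $\eta=0$ there, zero on $\cD$ because $u=0$ there), so the natural comparison problem is a pure Dirichlet problem, not the mixed model — the whole gain of the $2(m+2)/(m+1)$-exponent is lost. Second, your $\eta$ has no $x^1$-dependence, so it does not localize to a thin boundary slab: $\{\eta<1\}$ is a full half-cylinder of measure comparable to $R^d$, not $\lesssim\gamma R^d$. In the paper, $(1-\chi)u$ is supported on a set of measure $\lesssim\gamma R^d$ (because of the extra condition $x^1<\gamma R/2$), which is exactly what lets \eqref{est-08041538-1}--\eqref{est-08051625-2} produce the $\gamma^{1/(2\mu')}$ factor. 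With your $\eta$, the quantity $(1-\eta)u$, and hence $w=u-v$, is large on a set of full measure, and the claimed energy bound $\|Dw\|_{L^2}^2\lesssim(\text{small errors})$ cannot close: indeed $v$ equals $u\eta=0$ on the conormal part of $\p B_{R/8}$ while $u$ is not small there.

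A second, more minor but still substantive gap is the definition of $w$ by $w:=u-v$ with $v$ a model solution on the idealized half-ball. Since $u$ and $v$ are defined on different domains with boundary data prescribed on different surfaces, this difference does not satisfy a clean weak formulation that can be tested against itself; the paper instead \emph{constructs} $w$ via Lax--Milgram as the solution of the auxiliary equation \eqref{eqn-08041706} on $\Omega_{R/4}^+$ with frozen coefficients $\overline{a_{ij}}$ and with all perturbation terms (including the coefficient deviation, the cut-off terms, and the reflected data from the thin strip $\Omega_{R/4}^-$) explicitly placed on the right-hand side, and then sets $v:=\chi u - w$. That is what makes both the energy bound for $w$ and the clean model problem for $v$ legitimate. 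You allude to ``an even reflection trick in $x^1$'' but do not carry it out; it is in fact what bridges the real boundary (buried inside $\Omega_{R/4}^-$) and the artificial flat boundary $\{x^1=\gamma R/4\}$, and supplies the small source terms $g_i^{(2)},g_i^{(4)},g^{(5)}$ supported on a $\gamma R^d$-measure set. Once the cut-off and the reflection are set up as in the paper, the remainder of your argument (H\"older against $|\overline{a_{ij}}-a_{ij}|$, reverse H\"older, Poincar\'e via Lemma~\ref{lem-poincare}, and anisotropic Sobolev on the model problem) is indeed on the right track.
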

\begin{proof}[Proof of Lemma \ref{lem-decomposition}]
When $\dist(x_0,\Gamma)\geq R/16$, we either only deal with the interior case or the case with purely Dirichlet/conormal boundary condition (depending on $B_{R/16}(x_0)\cap\cN =\emptyset$ or $B_{R/16}(x_0)\cap\cD =\emptyset$). The interior $W^{1,p}$-estimates for equations with VMO coefficients are by now standard, while the corresponding estimates for purely Dirichlet/conormal problems on Reifenberg flat domains can be found in \cite{DK11,DK12}. Also, one may refer to \cite[page~22]{CDL}. In the following, we focus on the case when $\dist(x_0,\Gamma)< R/16$, where the boundary condition is ``mixed''.

Pick a point $y_0\in\Gamma$ with $\dist(y_0,x_0)<R/16$. Now we take the coordinate system associated with $y_0$ and $R/4$ as in Assumptions \ref{ass-RF} and \ref{ass-separation}, so that $y_0=0$. Denote
\begin{equation*}
\begin{split}
&\Omega_{R/4}:=\Omega\cap B_{R/4},\\
&\Omega^{+}_{R/4}:=\Omega_{R/4}\cap \{x^1>\gamma R/4\},\quad \Omega^{-}_{R/4}:=\Omega_{R/4}\cap \{x^1<\gamma R/4\},\\
&\Gamma^{+}:=\{x^1=\gamma R/4,x^2>\phi-\gamma R/4\}\cap B_{R/4},\\ &\Gamma^{-}:=\{x^1=\gamma R/4,x^2<\phi -\gamma R/4\}\cap B_{R/4}.
\end{split}
\end{equation*}
We take
\begin{equation}\label{eqn-08041607}
W := |Du|+\sqrt{\lambda}|u|\quad \text{and} \quad V:= 0 \quad \text{on}\,\,\Omega_{R/4}^{-}.
\end{equation}
Due to H\"older's inequality, Assumption \ref{ass-RF}, and Proposition \ref{prop-reverse-holder}, we obtain
\begin{align}
(W^2\bI_{\Omega_{R/4}^{-}})^{1/2}_{\Omega_{R/32}(x_0)} &\leq \frac{|\Omega_{R/4}^{-}\cap\Omega_{R/32}(x_0)|^{1/(2\mu')}}
{|\Omega_{R/32}(x_0)|^{1/(2\mu')}}|(W^{2\mu}\bI_{\Omega_{R/4}^{-}})^{1/(2\mu)}_{\Omega_{R/32}(x_0)}\nonumber\\
&\leq C(d,M)\gamma^{1/(2\mu')} (U^{2\mu})^{1/(2\mu)}_{\Omega_{R/32}(x_0)}\nonumber\\
&\leq C(d,p,M,\Lambda)\gamma^{1/(2\mu')} \big((U^2)^{1/2}_{\Omega_R(x_0)} + (F^{2\mu})^{1/(2\mu)}_{\Omega_R(x_0)}\big).\label{est-08051625-1}
\end{align}
In the following, we mainly focus on constructing $W$ and $V$ on $\Omega_{R/4}^{+}$. For this, we introduce a cut-off function $\chi\in C^\infty(\bR^d)$:
\begin{equation*}
\begin{cases}
\chi=0\quad \text{on}\,\,\{x^2>\phi-\gamma R/2\}\cap\{x^1<\gamma R/2\},\\
\chi=1\quad \text{on}\,\,\{x^2<\phi-\gamma R\}\cup\{x^1>2\gamma R\},\\
0\leq\chi\leq 1,\quad |D\chi|\leq \frac{4\sqrt{1+M^2}}{\gamma R}.
\end{cases}
\end{equation*}
We have the following two estimates:
\begin{align}
(|(1-\chi) U|^2)^{1/2}_{\Omega_{R/4}} &\leq C_1 \gamma^{1/(2\mu')}\big((U^2)^{1/2}_{\Omega_R(x_0)} + (F^{2\mu})^{1/(2\mu)}_{\Omega_R(x_0)}\big),\label{est-08041538-1}\\
(|uD\chi|^2)^{1/2}_{\Omega_{R/4}} &\leq C_2 \gamma^{1/(2\mu')}(U^2)^{1/2}_{\Omega_R(x_0)},\label{est-08041538-2}
\end{align}
where $C_1=C_1(d,p,\Lambda)$ and $C_2=C_2(d,p,M,\Lambda)$. The estimate \eqref{est-08041538-1} is a direct consequence of H\"older's inequality and Proposition \ref{prop-reverse-holder}, by noting that
\begin{equation*}
|\Omega_{R/4}\cap\supp(1-\chi)| \leq C(d)\gamma R^d.
\end{equation*}
The estimate \eqref{est-08041538-2} can be obtained as follows: we first decompose
$$
\supp\{D\chi\}\cap\Omega_{R/4}\subset\bigcup_{z\in \cD_{grid}}\Omega_{2\sqrt{d+3}\gamma R}(z),
$$
where
\begin{align*}
&\cD_{grid} := \Big\{z\in \bR^d : z=(\gamma R/4,k\gamma R)\ \text{ for }\ k=(k_2,\ldots,k_d) \in \mathbb{Z}^{d-1},\\
&\qquad \text{where}\ \Omega_{\sqrt{d+3}\gamma R}(z) \cap \{x^2>\phi-3\gamma R/2\} \cap \Omega_{R/4}\neq \emptyset\Big\}.
\end{align*}
Note that on each $\Omega_{2\sqrt{d+3}\gamma R}(z)$, the conditions of Lemma \ref{lem-poincare} are satisfied with a uniform $\alpha\geq\alpha(d,M)>0$. Hence we can apply Lemma \ref{lem-poincare} to obtain
\begin{equation*}
\norm{uD\chi}_{L^2(\Omega_{2\sqrt{d+3}\gamma R}(z))} \lesssim \frac{1}{\gamma R}\norm{u}_{L^2(\Omega_{2\sqrt{d+3}\gamma R}(z))} \lesssim \norm{Du}_{L^2(\Omega_{4\sqrt{d+3}\gamma R}(z))}.
\end{equation*}
Using $\gamma<1/(32\sqrt{d+3})$ and the definition of $\cD_{grid}$, we have $\Omega_{4\sqrt{d+3}\gamma R}(z)\subset \Omega_R(x_0)$. Since each point is covered by at most $N(d)$ such balls, \eqref{est-08041538-2} is proved.

A straightforward but tedious calculation gives the following equation for $\chi u$ on $\Omega_{R/4}^{+}$.
\begin{equation}\label{eqn-08041616}
\begin{cases}
D_i(a_{ij}D_j(\chi u)) - \lambda \chi u= D_ig_i^{(1)} + D_ig_i^{(2)} + g_i^{(3)}D_i\chi + g_i^{(4)}D_i\widetilde{\chi} + g^{(5)} & \text{in } \Omega_{R/4}^{+},\\
a_{ij}D_j(\chi u)n_i = g_i^{(1)}n_i +g_i^{(2)}n_i & \text{on } \Gamma^{-},\\
\chi u=0 &\text{on } \Gamma^{+},
\end{cases}
\end{equation}
where
\begin{align*}
g_i^{(1)}&=a_{ij}uD_j\chi + f_i\chi ,\quad g_i^{(2)}=(-\epsi_{i}\epsi_{j}\widetilde{a_{ij}}\widetilde{\chi}D_j\widetilde{u} + \epsi_{i}\widetilde{\chi}\widetilde{f_i})\bI_{Ex\in\Omega_{R/4}^{-}},\\
g_i^{(3)}&=a_{ij}D_ju-f_i,\quad g_i^{(4)}=(\epsi_i\epsi_j  \widetilde{a_{ij}}D_j\widetilde{u}-\epsi_i\widetilde{f_i})\bI_{Ex\in\Omega_{R/4}^{-}},\\
g^{(5)}&= \chi f  + \widetilde{\chi}\widetilde{f}\bI_{Ex\in\Omega_{R/4}^{-}}+ \lambda\widetilde{\chi}\widetilde{u}\bI_{Ex\in\Omega_{R/4}^{-}}.
\end{align*}
Here we denote the reflection operator and reflected function as
$$E(x^1,x^2,\ldots,x^d):=(\gamma R/2-x^1,x^2,\ldots,x^d),\quad \widetilde{f}:=f\circ E,
$$
and similarly for $\widetilde{a_{ij}}, \widetilde{\chi}$, and $\widetilde{u}$. We also use the following notation for sign functions
$$\epsi_{i}:=\begin{cases}
-1& \text{if }\,\, i=1,\\
1&\text{if }\,\, i\neq 1.
\end{cases}$$
The function $\chi u$ satisfies \eqref{eqn-08041616} in the weak sense: the usual integral identity is satisfied if we take any test function $\psi\in W^{1,2}_{\p\Omega_{R/4}^{+}\setminus\Gamma^{-}}(\Omega_{R/4}^{+})$.

Now we are in position of constructing the decomposition. By the Lax-Milgram lemma, the following equation has a unique solution $w\in W^{1,2}_{\p\Omega_{R/4}^{+}\setminus\Gamma^{-}}(\Omega^{+}_{R/4})$:
\begin{equation}\label{eqn-08041706}
\begin{cases}
\begin{aligned}
D_{i}(\overline{a_{ij}}D_jw) - \lambda w&=
D_i((\overline{a_{ij}}-a_{ij})D_j(\chi u)) + D_ig_i^{(1)} + D_i g_i^{(2)}\\
&\quad + g_i^{(3)}D_i\chi + g_i^{(4)}D_i\widetilde{\chi} +g^{(5)}
\end{aligned}
&\text{in }\Omega_{R/4}^{+},\\
\overline{a_{ij}}D_jw\cdot n_i = (\overline{a_{ij}}-a_{ij})D_j (\chi u) n_i + g_i^{(1)}n_i +g_i^{(2)}n_i &\text{on }\Gamma^{-},\\
w=0 & \text{on }\p\Omega_{R/4}^{+}\setminus\Gamma^{-},
\end{cases}
\end{equation}
where we take $\overline{a_{ij}}:=(a_{ij})_{\Omega_{R/4}}$. Taking the difference between \eqref{eqn-08041616} and \eqref{eqn-08041706}, we obtain that $v:=\chi u-w$ satisfies
\begin{equation*}
\begin{cases}
D_{i}(\overline{a_{ij}}D_jv) - \lambda v= 0 &\text{in }\Omega_{R/4}^{+},\\
\overline{a_{ij}}D_jv \cdot n_i = 0 &\text{on }\Gamma^{-},\\
v=0 &\text{on }\Gamma^{+}.
\end{cases}
\end{equation*}
We define
\begin{equation}\label{eqn-08041725}
W:=|Dw| + \sqrt{\lambda}|w| + |D((1-\chi)u)| + \sqrt{\lambda}|(1-\chi)u|,\quad V:= |Dv| + \sqrt{\lambda}|v|\quad \text{on}\,\,\Omega_{R/4}^{+}.
\end{equation}
By our construction \eqref{eqn-08041607} and \eqref{eqn-08041725}, clearly we have $U\leq W+V$ on $\Omega_{R/32}(x_0)\subset\Omega_{R/4}$. Now we estimate $W$. By \eqref{est-08041538-1} and \eqref{est-08041538-2}, we have
\begin{align}
&(|D((1-\chi)u)|^2)^{1/2}_{\Omega_{R/32}(x_0)} + \sqrt{\lambda}(|(1-\chi)u|^2)^{1/2}_{\Omega_{R/32}(x_0)} \nonumber\\
&\lesssim (|D((1-\chi)u)|^2)^{1/2}_{\Omega_{R/4}} + \sqrt{\lambda}(|(1-\chi)u|^2)^{1/2}_{\Omega_{R/4}}\nonumber\\ &\lesssim \gamma^{1/(2\mu')}\big((U^2)^{1/2}_{\Omega_R(x_0)} + (F^{2\mu})^{1/(2\mu)}_{\Omega_R(x_0)}\big).\label{est-08051625-2}
\end{align}
The estimate for $|Dw|+\sqrt{\lambda}|w|$ requires more work. For simplicity, here we denote
\begin{equation*}
\widehat{W}:=|Dw| + \sqrt{\lambda}|w|.
\end{equation*}
Testing \eqref{eqn-08041706} by $w$, using the ellipticity condition and H\"older's inequality, and then rearranging terms, we obtain
\begin{equation}\label{est-08051349}
\begin{split}
(\widehat{W}^2)&_{\Omega_{R/4}^{+}}\\
\lesssim& \big((|(\overline{a_{ij}}-a_{ij})D_j(\chi u)|^2)^{1/2}_{\Omega_{R/4}^{+}} + (|uD\chi|^2)^{1/2}_{\Omega_{R/4}^{+}} + (|\chi F|^2)^{1/2}_{\Omega_{R/4}} + (|\bI_{Ex\in\Omega_{R/4}^{-}}U|^2)^{1/2}_{\Omega_{R/4}}\big) (\widehat{W}^2)^{1/2}_{\Omega_{R/4}^{+}}\\
&+ \big((|\bI_{\supp(D\chi)}U|^2)^{1/2}_{\Omega_{R/4}^{+}} + (|\bI_{\supp(D\chi)}F|^2)^{1/2}_{\Omega_{R/4}^{+}}\big)(|wD\chi|^2)^{1/2}_{\Omega_{R/4}^{+}}\\
&+ \big((|\bI_{Ex\in\Omega_{R/4}^{-}}U|^2)^{1/2}_{\Omega_{R/4}^{+}} + (|\bI_{Ex\in\Omega_{R/4}^{-}}F|^2)^{1/2}_{\Omega_{R/4}^{+}}\big)
(|wD\widetilde{\chi}|^2)^{1/2}_{\Omega_{R/4}^{+}}.
\end{split}
\end{equation}
Applying the decomposition argument in the proof of \eqref{est-08041538-2} and the Poincar\'e inequality in Lemma \ref{lem-poincare} again, we have
\begin{equation}\label{est-08062324}
(|wD\chi|^2)^{1/2}_{\Omega_{R/4}^{+}} + (|wD\widetilde{\chi}|^2)^{1/2}_{\Omega_{R/4}^{+}} \lesssim (|Dw|^2)^{1/2}_{\Omega_{R/4}^{+}}.
\end{equation}
Notice that here we do not need to increase the domain of integration since we can take zero extension of $w$ outside $\Omega^{+}_{R/4}$, which is still a $W^{1,2}$ function on the upper half space.

Substituting \eqref{est-08062324} back into \eqref{est-08051349} and then applying H\"older's inequality, \eqref{est-08041538-2}, and Proposition \ref{prop-reverse-holder}, we obtain
\begin{align}
(|\widehat{W}&|^2)^{1/2}_{\Omega_{R/4}^{+}}\nonumber\\
\lesssim& (|(\overline{a_{ij}}-a_{ij})D_j(\chi u)|^2)^{1/2}_{\Omega_{R/4}^{+}} + (|uD\chi|^2)^{1/2}_{\Omega_{R/4}^{+}} + (|\chi F|^2)^{1/2}_{\Omega_{R/4}} + (|\bI_{Ex\in\Omega_{R/4}^{-}}U|^2)^{1/2}_{\Omega_{R/4}} \nonumber\\
&+ (|\bI_{\supp(D\chi)}U|^2)^{1/2}_{\Omega_{R/4}^{+}} + (|\bI_{\supp(D\chi)}F|^2)^{1/2}_{\Omega_{R/4}^{+}} + (|\bI_{Ex\in\Omega_{R/4}^{-}}F|^2)^{1/2}_{\Omega_{R/4}^{+}}\nonumber\\
\lesssim& (|(\overline{a_{ij}}-a_{ij})|^{2\mu'})^{1/(2\mu')}_{\Omega_{R/4}^{+}}(|Du|^{2\mu})^{1/(2\mu)}_{\Omega_{R/4}^{+}} + (|uD\chi|^2)^{1/2}_{\Omega_{R/4}} + (F^{2\mu})^{1/(2\mu)}_{\Omega_{R/4}}\nonumber\\
&+ (|\bI_{Ex\in\Omega_{R/4}^{-}}+\bI_{\supp(D\chi)}|^{2\mu'})^{1/(2\mu')}_{\Omega_{R/4}}(U^{2\mu})^{1/(2\mu)}_{\Omega_{R/4}}\nonumber\\
\lesssim& (\theta^{1/(2\mu')}+\gamma^{1/(2\mu')})(U^2)^{1/2}_{\Omega_R(x_0)} + (F^{2\mu})^{1/(2\mu)}_{\Omega_R(x_0)}.\label{est-08051626}
\end{align}
Hence we reach \eqref{eqn-est-W} by combining \eqref{est-08051625-1}, \eqref{est-08051625-2}, and \eqref{est-08051626}:
\begin{equation*}
\begin{split}
&(W^2)^{1/2}_{\Omega_{R/32}(x_0)} \\
&\lesssim (|W\bI_{\Omega_{R/4}^{-}}|^2)^{1/2}_{\Omega_{R/32}(x_0)} + (|W\bI_{\Omega_{R/4}^{+}}|^2)^{1/2}_{\Omega_{R/32}(x_0)}\\
&\lesssim (|W\bI_{\Omega_{R/4}^{-}}|^2)^{1/2}_{\Omega_{R/32}(x_0)} + (\widehat{W}^2)^{1/2}_{\Omega_{R/4}^{+}} + (|D((1-\chi)u)|^2)^{1/2}_{\Omega_{R/32}(x_0)} + \sqrt{\lambda}(|(1-\chi)u|^2)^{1/2}_{\Omega_{R/32}(x_0)}\\
&\lesssim (\theta^{1/(2\mu')}+\gamma^{1/(2\mu')})(U^2)^{1/2}_{\Omega_R(x_0)} + (F^{2\mu})^{1/(2\mu)}_{\Omega_R(x_0)}.
\end{split}
\end{equation*}
The estimate for $V$ follows from a linearly transformed and rescaled version of Theorem \ref{thm-halfspace}, \eqref{est-08041538-2}, and \eqref{est-08051626}. Since $V\leq \widehat{W} + |D(\chi u)| + \sqrt{\lambda}|\chi u|$, for any $q\in \big(2,\frac{2(m+2)}{m+1}\big)$,
\begin{equation*}
\begin{split}
(V^q)^{1/q}_{\Omega_{R/32}(x_0)}
\leq&
(V^q)^{1/q}_{\Omega_{R/8}^{+}}
\lesssim  (V^2)^{1/2}_{\Omega_{R/4}^+}\\
\lesssim&  (\abs{D(\chi u)}^2)^{1/2}_{\Omega_{R/4}^+} + \sqrt{\lambda}(\abs{\chi u}^2)^{1/2}_{\Omega_{R/4}^+} + (\widehat{W}^2)^{1/2}_{\Omega_{R/4}^+}\\
\lesssim&  (U^2)^{1/2}_{\Omega_R(x_0)} + N(\theta^{1/(2\mu')}+\gamma^{1/(2\mu')})(U^2)^{1/2}_{\Omega_R(x_0)} + (F^{2\mu})^{1/(2\mu)}_{\Omega_R(x_0)}\\
\lesssim&  (U^2)^{1/2}_{\Omega_R(x_0)} + N(F^{2\mu})^{1/(2\mu)}_{\Omega_R(x_0)}.
\end{split}
\end{equation*}
Hence Lemma \ref{lem-decomposition} is proved.
\end{proof}
\subsection{Level set argument and proof of Proposition \ref{prop-regularity}}
In this subsection, we prove Proposition \ref{prop-regularity}, by deriving a decay estimate of the following two level sets:
\begin{align*}
&\cA(s):= \set{x\in\Omega:\cM_{\Omega}(U^2)^{1/2} >s},\\
&\cB(s):= \set{x\in\Omega: (\gamma^{1/(2\mu')}+\theta^{1/(2\mu')})^{-1}
\cM_{\Omega}(F^{2\mu})^{1/(2\mu)} + \cM_{\Omega}(U^2)^{1/2} >s }.
\end{align*}
Here $\cM_{\Omega}$ is the Hardy-Littlewood maximal operator on $\Omega$, by taking zero extension outside: for $f\in L^1_{\rm{loc}}(\Omega)$ and $x\in\Omega$,
\begin{equation*}
\cM_{\Omega}(f)(x):=\sup_{r>0}\dashint_{B_r(x)}F
\bI_{\Omega}.
\end{equation*}
By the Hardy-Littlewood  theorem, for any $f\in L^q(\Omega)$ with $q\in [1,\infty)$, we have
\begin{equation}\label{est-08051700}
\abs{\set{x\in\Omega:\cM_{\Omega}(f)(x)>s}} \leq C\frac{\norm{f}_{L^q(\Omega)}^q}{s^q},
\end{equation}
where $C=C(d,q)$.

As mentioned before, we first prove the following local property at certain small scales.
\begin{lemma}\label{lem-levelset-local}
Under the conditions of Lemma \ref{lem-decomposition}, for any $q\in[2,2(m+2)/(m+1))$, there exists a constant $C$ depending on $(d,p,q,M,\Lambda)$, such that for all $\kappa>2^{d/2}$ and $s>0$, the following holds: if for some $R<R_1, x_0\in \overline{\Omega}$,
\begin{equation}\label{eqn-08061347}
\abs{\Omega_{R/128}(x_0)\cap\cA(\kappa s)} \geq C\big(\kappa^{-q} + \kappa^{-2}(\gamma^{1/\mu'}+\theta^{1/\mu'})\big)\abs{\Omega_{R/128}(x_0)},
\end{equation}
then $\Omega_{R/128}(x_0)\subset \cB(s)$.
\end{lemma}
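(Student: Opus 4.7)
The plan is to argue by contraposition: suppose $\Omega_{R/128}(x_0)\not\subset\cB(s)$, so there exists
\[
y_0\in \Omega_{R/128}(x_0)\quad\text{with}\quad \cM_\Omega(U^2)^{1/2}(y_0)\le s,\qquad \cM_\Omega(F^{2\mu})^{1/(2\mu)}(y_0)\le (\gamma^{1/(2\mu')}+\theta^{1/(2\mu')})\,s.
\]
The goal becomes to upgrade this single-point control at $y_0$ into the measure bound
$|\Omega_{R/128}(x_0)\cap\cA(\kappa s)|<C(\kappa^{-q}+\kappa^{-2}(\gamma^{1/\mu'}+\theta^{1/\mu'}))|\Omega_{R/128}(x_0)|$.

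First, I would use $y_0$ to globalize the hypothesis on the ball $B_R(x_0)$. Since $|x_0-y_0|\le R/128$, we have $B_R(x_0)\subset B_{2R}(y_0)$, and Reifenberg flatness gives $|\Omega_R(x_0)|\gtrsim R^d$, so
\[
(U^2)_{\Omega_R(x_0)}\le C_d\,s^2,\qquad (F^{2\mu})_{\Omega_R(x_0)}^{1/(2\mu)}\le C_d(\gamma^{1/(2\mu')}+\theta^{1/(2\mu')})\,s.
\]
Now I apply Lemma \ref{lem-decomposition} at $(x_0,R)$ to produce $W,V$ on $\Omega_{R/32}(x_0)$ with $U\le W+V$ and, after substituting the displays above into \eqref{eqn-est-W}--\eqref{eqn-est-V},
\[
\|W\|_{L^2(\Omega_{R/32}(x_0))}\le C(\theta^{1/(2\mu')}+\gamma^{1/(2\mu')})\,s\,|\Omega_{R/32}(x_0)|^{1/2},\quad \|V\|_{L^q(\Omega_{R/32}(x_0))}\le C\,s\,|\Omega_{R/32}(x_0)|^{1/q}.
\]

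Next I split the maximal function. For $x\in \Omega_{R/128}(x_0)$ and any ball $B_r(x)$, either $r\le R/128$, in which case $B_r(x)\cap\Omega\subset\Omega_{R/32}(x_0)$ and the average of $U^2$ is bounded by $2\cM_{\Omega_{R/32}(x_0)}(W^2+V^2)(x)$; or $r>R/128$, in which case $B_r(x)\subset B_{c_d r}(y_0)$ for a purely geometric $c_d$, and the average is bounded by $c_d^{\,d}\,\cM_\Omega(U^2)(y_0)\le c_d^{\,d}\,s^2$. Taking $\kappa$ above the threshold (which is the role of the hypothesis $\kappa>2^{d/2}$, possibly after absorbing numerical constants into $C$) forces any $x\in \Omega_{R/128}(x_0)\cap\cA(\kappa s)$ to satisfy
\[
\cM_{\Omega_{R/32}(x_0)}(W^2)(x)+\cM_{\Omega_{R/32}(x_0)}(V^2)(x)>c\,\kappa^2 s^2.
\]

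Finally I apply the weak-type estimate \eqref{est-08051700}: weak-$(1,1)$ to $W^2$ at level $\sim\kappa^2 s^2$ gives a set of measure $\le C\kappa^{-2}s^{-2}\|W\|_{L^2}^2\le C\kappa^{-2}(\theta^{1/\mu'}+\gamma^{1/\mu'})|\Omega_{R/32}(x_0)|$; weak-$(q/2,q/2)$ (valid because $q\ge 2$) applied to $V^2$ at the same level yields $\le C\kappa^{-q}s^{-q}\|V\|_{L^q}^q\le C\kappa^{-q}|\Omega_{R/32}(x_0)|$. Summing and using $|\Omega_{R/32}(x_0)|\le C_d|\Omega_{R/128}(x_0)|$ (again Reifenberg flatness) produces exactly the bound that contradicts \eqref{eqn-08061347}. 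The main obstacle I expect is the bookkeeping in the maximal-function splitting: carefully tracking the geometric constant $c_d$ so that the universal assumption $\kappa>2^{d/2}$ really does control the ``far'' part, and making sure the exponent $q\ge 2$ is used properly in the weak-type bound for $V^2$; the rest is a mechanical assembly of the already-proved decomposition and the Hardy--Littlewood theorem.
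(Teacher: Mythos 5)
Your argument follows the same route as the paper's: contrapositive, globalization of the pointwise maximal-function control at $y_0$, application of Lemma~\ref{lem-decomposition}, a near/far split of the maximal function, and weak-type estimates for $W^2$ (weak $L^1$) and $V^2$ (weak $L^{q/2}$). The one point that needs fixing is the threshold $R/128$ in your near/far split. For $r>R/128$ and $x,y_0\in\Omega_{R/128}(x_0)$ you only have $|x-y_0|\le R/64<2r$, so the best inclusion is $B_r(x)\subset B_{3r}(y_0)$; this gives $c_d=3$, and ruling out far balls then requires $\kappa>3^{d/2}$, which the stated hypothesis $\kappa>2^{d/2}$ does not supply, and this lower bound on $\kappa$ is not something that can simply be absorbed into the constant $C$. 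The clean fix, which the paper adopts, is to split at $r=R/64$: then $|x-y_0|\le R/64<r$ gives $B_r(x)\subset B_{2r}(y_0)$, which matches $\kappa>2^{d/2}$ exactly, while the near case $r\le R/64$ still places $B_r(x)$ inside the ball of radius $R/32$ about the relevant center where the decomposition holds. (Alternatively, one can observe that for $\kappa\in(2^{d/2},3^{d/2}]$ the hypothesis \eqref{eqn-08061347} is vacuous once $C>3^{dq/2}$.) A purely cosmetic difference is that you center Lemma~\ref{lem-decomposition} at $x_0$ and then globalize, whereas the paper centers it at $y_0$ (called $z_0$ there), so that the right-hand sides of the decomposition estimates are directly controlled by $\cM_\Omega(\cdot)(z_0)\le s$; both work.
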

\begin{proof}
It suffices to prove the contrapositive of the statement with $s=1$: suppose that there exists a point $z_0\in \Omega_{R/128}(x_0), z_0\notin \cB(1)$, we aim to prove that there exists some constant $C=C(d,p,q,M,\Lambda)$, such that
\begin{equation}\label{est-08061332}
\abs{\Omega_{R/128}(x_0)\cap\cA(\kappa)} < C\big(\kappa^{-q} + \kappa^{-2}(\gamma^{1/\mu'}+\theta^{1/\mu'})\big)\abs{\Omega_{R/128}(x_0)}.
\end{equation}
We decompose
\begin{align*}
&\Omega_{R/128}(x_0)\cap\cA(\kappa)\\
&=\{y\in\Omega_{R/128}(x_0): \sup_{r\in A_y^\kappa}\,r>R/64\}\cup \{y\in\Omega_{R/128}(x_0): \sup_{r\in A_y^\kappa}\,r\leq R/64\},
\end{align*}
where
\begin{equation*}
A_y^\kappa:=\{r:(U^2)^{1/2}_{B_r(y)}>\kappa\}.
\end{equation*}
We claim $\{y\in\Omega_{R/128}(x_0): \sup_{r\in A_y^\kappa}\,r>R/64\}=\emptyset$. By contradiction, if there exists one such $y$, take any $r\in A^\kappa_y, r>R/64$. From $B_r(y)\subset B_{2r}(z_0)$ and $z_0\notin \cB(1)$, we obtain
\begin{equation*}
(U^2)^{1/2}_{B_r(y)} \leq 2^{d/2}(U^2)^{1/2}_{B_{2r}(z_0)} \leq 2^{d/2} \cM_{\Omega}(U^2)^{1/2}(z_0)\leq 2^{d/2} \leq \kappa.
\end{equation*}
This contradicts the definition of $A^{\kappa}_y$. We are left to estimate $|\{y\in\Omega_{R/128}(x_0): \sup_{r\in A_y^\kappa}\,r\leq R/64\}|$. For any $y$ in this set and $r\in A^\kappa_y$, by noting that $B_r(y)\subset B_{R/32}(z_0)$, we apply Lemma \ref{lem-decomposition} on $\Omega_{R/32}(z_0)$ to obtain the decomposition $U\leq V+W$, satisfying
\begin{equation}\label{est-08061322}
(W^2)^{1/2}_{\Omega_{R/32}(z_0)}\leq C(\gamma^{1/(2\mu')}+\theta^{1/(2\mu')}),\quad (V^q)^{1/q}_{\Omega_{R/32}(z_0)}\leq C,
\end{equation}
where the constant $C=C(d,p,q,M,\Lambda)$. Here, to reach \eqref{est-08061322} from \eqref{eqn-est-W} and \eqref{eqn-est-V}, we used the fact that $z_0\notin \cB(1)$. Now, by $r\in A^\kappa_y$ and $U\leq V+W$ on $B_{R/32}(z_0)$, we have for any point in $\{y\in\Omega_{R/128}(x_0): \sup_{r\in A_y^\kappa}\,r\leq R/64\}$,
\begin{equation*}
\kappa < (U^2)^{1/2}_{B_r(y)} \leq \cM_\Omega(|W\bI_{\Omega_{R/32}(z_0)}|^2)^{1/2}(y) + \cM_\Omega(|V\bI_{\Omega_{R/32}(z_0)}|^2)^{1/2}(y).
\end{equation*}
Hence, we get
\begin{align}
|\{y\in&\Omega_{R/128}(x_0): \sup_{r\in A_y^\kappa}\,r\leq R/64\}|\nonumber\\
\leq& |\{\cM_\Omega(|W\bI_{\Omega_{R/32}(z_0)}|^2)^{1/2}>\kappa/2\}| + |\{\cM_\Omega(|V\bI_{\Omega_{R/32}(z_0)}|^2)^{1/2}>\kappa/2\}|\nonumber\\
\leq& C\big((\kappa/2)^{-2} \norm{W}_{L^2(\Omega_{R/32}(z_0))}^2+ (\kappa/2)^{-q}\norm{V}^q_{L^q(\Omega_{R/32}(z_0))}\big)\label{est-08061327}\\
\leq& C|\Omega_{R/32}(z_0)|(\kappa^{-2}(\gamma^{1/\mu'}+\theta^{1/\mu'}) + \kappa^{-q})\label{est-08061331}\\
\leq& C\big(\kappa^{-q} + \kappa^{-2}(\gamma^{1/\mu'}+\theta^{1/\mu'})\big)\abs{\Omega_{R/128}(x_0)},\nonumber
\end{align}
where we applied \eqref{est-08051700} to obtain \eqref{est-08061327}, and used \eqref{est-08061322} to reach \eqref{est-08061331}. The constant $C=C(d,p,q,M,\Lambda)$. This proves \eqref{est-08061332} and hence the lemma.
\end{proof}

The local property in Lemma \ref{lem-levelset-local} leads to the following global estimate by using the ``crawling of the ink spots'' lemma in \cite{KS}. For any $x_0\in\cA(\kappa s)$, we shrink the ball $\Omega_{R/128}(x_0)$ from $R=R_1$ until the first time \eqref{eqn-08061347} holds. By \eqref{est-08051700}, \eqref{eqn-08061354-1}, \eqref{eqn-08061354-2}, and the Lebesgue differentiation theorem, such $R$ exists and $R\in(0,R_1)$. Now we cover $\cA(\kappa s)$ by such balls which are almost disjoint (using the Vitali covering lemma). On each ball $\Omega_{R/128}(x_0)$, by Lemma \ref{lem-levelset-local} we have $\Omega_{R/128}(x_0)\subset \cB(s)$. Since this is an almost disjoint cover of $\cA(\kappa s)$, the following corollary is proved.
\begin{corollary}\label{cor-levelset-global}
Under the same hypothesis of Lemma \ref{lem-levelset-local}, for any $q\in [2,2(m+2)/(m+1))$, there exists a constant $C$ depending on $(d,p,q,M,\Lambda)$, such that for any $\kappa>\max\set{2^{d/2},\kappa_0}$ and
\begin{align}\label{eqn-08061354-1}
s&>s_0(d,p,q,M,\Lambda,R_1,\kappa,\gamma,\theta,\norm{U}_{L^2(\Omega)})\nonumber\\
&:=\Bigg(\frac{3\norm{U}_{L^2(\Omega)}^2}{C\kappa^2(\kappa^{-q} + \kappa^{-2}(\gamma^{1/\mu'}+\theta^{1/\mu'}))|B_{R_1/128}|}\Bigg)^{1/2},
\end{align}
we have
$$
\abs{\cA(\kappa s)} \leq C\big(\kappa^{-q} + \kappa^{-2}(\gamma^{1/\mu'}+\theta^{1/\mu'})\big)\abs{\cB(s)},
$$
where $\kappa_0$ is a constant satisfying
\begin{equation}\label{eqn-08061354-2}
C\big(\kappa_0^{-q} + \kappa_0^{-2}(\gamma^{1/\mu'}+\theta^{1/\mu'})\big) < 1.
\end{equation}
\end{corollary}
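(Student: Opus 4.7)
The plan is to deduce Corollary \ref{cor-levelset-global} from Lemma \ref{lem-levelset-local} by a Vitali stopping-time argument in the style of the ``crawling of ink spots'' lemma of \cite{KS}. Write $\tau := C(\kappa^{-q}+\kappa^{-2}(\gamma^{1/\mu'}+\theta^{1/\mu'}))$ for the density threshold in \eqref{eqn-08061347}. Lemma \ref{lem-levelset-local} is structured exactly like the hypothesis of the crawling lemma: whenever the relative density of $\cA(\kappa s)$ in $\Omega_{R/128}(x)$ reaches $\tau$, the same ball is contained in $\cB(s)$.

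For each $x\in \cA(\kappa s)$ I would decrease $R$ from $R_1$ and let $R_x\in(0,R_1)$ be the first radius at which the density
$$\rho_x(R) := \frac{|\Omega_{R/128}(x)\cap\cA(\kappa s)|}{|\Omega_{R/128}(x)|}$$
reaches $\tau$. Existence of $R_x$ strictly inside $(0,R_1)$ uses two endpoint estimates. At $R=R_1$, the weak-$(1,1)$ bound \eqref{est-08051700} applied to $U^2$, combined with the Reifenberg-flatness lower bound $|\Omega_{R_1/128}(x)|\gtrsim |B_{R_1/128}|$ and the choice of $s_0$ in \eqref{eqn-08061354-1} (tailored precisely to provide slack here), forces $\rho_x(R_1)<\tau$. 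As $R\to 0^+$, the Lebesgue differentiation theorem gives $\rho_x(R)\to 1>\tau$ at a.e.\ $x\in\cA(\kappa s)$, where the strict inequality comes from \eqref{eqn-08061354-2}. Continuity of $\rho_x$ in $R$ then yields $R_x$, and Lemma \ref{lem-levelset-local} gives $\Omega_{R_x/128}(x)\subset \cB(s)$.

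Next I would apply the Vitali covering lemma to the family $\{B_{R_x/128}(x)\}_{x\in\cA(\kappa s)}$, extracting a countable disjoint subfamily $\{B_{R_k/128}(x_k)\}$ whose $5$-fold dilates cover $\cA(\kappa s)$ up to a null set. Because $R_x$ is the \emph{first} crossing as $R$ decreases from $R_1$, we have $\rho_{x_k}(R)<\tau$ on $(R_k,R_1]$, and in particular at $R=\min(5R_k,R_1)$. Combining this density bound with doubling of $\Omega$, with the containment $\Omega_{R_k/128}(x_k)\subset\cB(s)$, and with the disjointness of the selected balls yields
\begin{align*}
|\cA(\kappa s)| &\le \sum_k |\Omega_{5R_k/128}(x_k)\cap\cA(\kappa s)| \le \tau\sum_k|\Omega_{5R_k/128}(x_k)|\\
&\le C_d\,\tau\sum_k|\Omega_{R_k/128}(x_k)| \le C_d\,\tau\,|\cB(s)|,
\end{align*}
which is the desired bound.

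The step I expect to be the most delicate is the boundary case where some $R_k$ is close to $R_1$, so that the inflated radius $5R_k/128$ exceeds $R_1/128$ and no stopping-time inequality is directly available at that scale. The remedy is to cap the dilation at $R_1$ and reuse the weak-$(1,1)$ estimate that produced $\rho_{x_k}(R_1)<\tau$; doubling bounds $|\Omega_{R_1/128}(x_k)|/|\Omega_{R_k/128}(x_k)|$ by a dimensional constant whenever $R_k>R_1/5$, so the exceptional balls contribute only a constant factor and the conclusion survives intact.
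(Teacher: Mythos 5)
Your proposal is correct and follows essentially the same route as the paper, which only sketches the argument in the paragraph preceding the corollary: shrink each ball from $R=R_1$ to the first radius at which \eqref{eqn-08061347} holds (using the weak-$(1,1)$ bound together with the choice of $s_0$ at the outer endpoint and the Lebesgue differentiation theorem together with \eqref{eqn-08061354-2} at the inner endpoint), apply the Vitali lemma to obtain an almost-disjoint cover, and invoke Lemma \ref{lem-levelset-local} to place each stopped ball inside $\cB(s)$. You supply more of the bookkeeping than the paper does, including the boundary correction when $5R_k>R_1$, which the paper passes over silently.
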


Now we give the proof of Proposition \ref{prop-regularity}.
\begin{proof}[Proof of Proposition \ref{prop-regularity}]
For any $W^{1,2}$-solution $u$, in order to prove $u\in W^{1,p}$ and the estimate \eqref{est-no-lower-order}, it suffices to prove
\begin{equation}\label{est-08061542}
\lim_{S\rightarrow \infty} p\int_0^S\abs{\cA(s)}s^{p-1}\,ds \leq C\big(R_1^{d(1-p/2)}\norm{U}_{L^2(\Omega)}^p + \norm{F}_{L^p(\Omega)}^p\big).
\end{equation}
For this, we will apply Corollary \ref{cor-levelset-global} with constants $q=p/2+(m+2)/(m+1)(>p>2\mu)$ and $(\kappa,\gamma,\theta)$ to be determined later satisfying
\begin{equation*}
\gamma < 1/(32\sqrt{d+3}),\quad \theta\in(0,1),\quad \kappa>\max\{2^{d/2},\kappa_0\}.
\end{equation*}
Changing the integral variable in \eqref{est-08061542} from $s$ to $\kappa s$, applying \eqref{est-08051700} and Corollary \ref{cor-levelset-global}, we obtain
\begin{align*}
p\int_0^S&\abs{\cA(s)}s^{p-1}\,ds = p\kappa^p\int_0^{S/\kappa}\abs{\cA(\kappa s)}s^{p-1}\,ds\\
\leq& C_0\kappa^p\int_0^{s_0}\frac{\norm{U}^2_{L^2(\Omega)}}{(\kappa s)^2}s^{p-1}\,ds + C\kappa^p\big(\kappa^{-q} + \kappa^{-2}(\gamma^{1/\mu'}+\theta^{1/\mu'})\big)\int_{s_0}^{S/\kappa}\abs{\cB(s)}s^{p-1}\,ds\\
\leq& C_1(R_1^{d(1-p/2)}\norm{U}^{p}_{L^2(\Omega)} + \norm{F}^p_{L^p(\Omega)})\\
&+ C\kappa^p\big(\kappa^{-q} + \kappa^{-2}(\gamma^{1/\mu'}+\theta^{1/\mu'})\big)\int_{0}^{S/(2\kappa)}\abs{\cA(s)}s^{p-1}\,ds,
\end{align*}
where $C_0=C_0(d,p)$, $C_1=C_1(d,p,q,M,\Lambda,\kappa,\gamma,\theta)$, and $C=C(d,p,q,M,\Lambda)$. Here for the last inequality, we used the relation
\begin{equation*}
\cB(s)\subset\cA(s/2)\cup\bigset{(\gamma^{1/(2\mu')}+\theta^{1/(2\mu')})^{-1}
\cM_{\Omega}(F^{2\mu})^{1/(2\mu)} >s/2}
\end{equation*}
and the Hardy-Littlewood theorem, then we changed the integral variable from $s$ to $s/2$. Now \eqref{est-08061542} is proved if we first choose $\kappa$ large enough and then choose $\gamma$ and $\theta$ small enough to absorb the last term on the right-hand side involving $\cA(s)$, and finally pass $S$ to the infinity.
\end{proof}

\subsection{Proof of Theorem \ref{thm-wellposedness}}
We close this section by giving the proof of our first main result Theorem \ref{thm-wellposedness}. First, from Proposition \ref{prop-regularity}, we can deduce the following a priori estimate for the original equation \eqref{eqn-07261524}, without the $L^2$ norm on the right-hand side.
\begin{corollary}\label{cor-regularity}
Let $p\in (2,2(m+2)/(m+1))$, $\gamma_0,\theta_0$ be the constants from Proposition \ref{prop-regularity}, and $u\in W^{1,p}_{\cD}$ be a weak solution the equation \eqref{eqn-07261524} with $f_i,f\in L^p(\Omega)$.
Under Assumptions \ref{ass-RF} $(\gamma_0)$, \ref{ass-separation} $(\gamma_0,m)$, and \ref{ass-smallBMO} $(\theta_0)$, there exists a positive constant $\lambda_1$ depending on $(d, p,M, \Lambda, R_1, K)$ such that if $\lambda>\lambda_1$, the following estimate holds:
\begin{equation*}
\|U\|_{L^p(\Omega)}\le C\|F\|_{L^p(\Omega)},
\end{equation*}
where $C=C(d,p,M,\Lambda)$.
\end{corollary}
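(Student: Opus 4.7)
The plan is to deduce Corollary \ref{cor-regularity} from Proposition \ref{prop-regularity} in two stages: first, absorb the lower-order terms of $L_\lambda$ into the right-hand side and apply the Proposition to the resulting equation of the form \eqref{eqn-07282306}; second, dispose of the residual $L^2$-remainder by taking $\lambda$ sufficiently large.

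First I would rewrite \eqref{eqn-07261524} as
\begin{equation*}
L_\lambda^{(0)} u = \tilde f + D_i \tilde f_i,\qquad \tilde f_i := f_i - b_i u,\quad \tilde f := f - \hat b_i D_i u - c u.
\end{equation*}
Since $|b_i|,|\hat b_i|,|c|\le K$ and $u\in W^{1,p}_\cD(\Omega)$, the new sources $\tilde f_i,\tilde f$ lie in $L^p(\Omega)$. Setting $\tilde F := \sum_i |\tilde f_i| + |\tilde f|/\sqrt\lambda$, the triangle inequality gives, for $\lambda \ge 1$,
\begin{equation*}
\|\tilde F\|_{L^p(\Omega)} \le \|F\|_{L^p(\Omega)} + \frac{CK}{\sqrt{\lambda}}\|U\|_{L^p(\Omega)}.
\end{equation*}
Plugging into Proposition \ref{prop-regularity} yields
\begin{equation*}
\|U\|_{L^p(\Omega)}\le C R_1^{d(1/p-1/2)}\|U\|_{L^2(\Omega)} + C\|F\|_{L^p(\Omega)} + \frac{CK}{\sqrt\lambda}\|U\|_{L^p(\Omega)},
\end{equation*}
and for $\lambda \ge (2CK)^2$ the last term on the right is absorbed into the left-hand side.

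The main obstacle is then to eliminate the residual $R_1^{d(1/p-1/2)}\|U\|_{L^2(\Omega)}$ term. For this I plan to use the classical $L^2$-energy estimate for $L_\lambda$: testing \eqref{eqn-07261524} with $u$ and combining ellipticity with Young's inequality gives $\|U\|_{L^2(\Omega)}^2 \le C\|F\|_{L^2(\Omega)}^2$ once $\lambda$ exceeds a threshold depending on $K$ and $\Lambda$. The delicate point is to pass from an $L^2$-bound on $F$ to an $L^p$-bound without introducing a constant depending on the global size of $\Omega$; this is precisely why $\lambda_1$ in the statement is allowed to depend on $R_1$. Enlarging $\lambda_1$ further exploits the $\sqrt\lambda$-weight built into the definitions of $U$ and $F$ and makes the coefficient $CR_1^{d(1/p-1/2)}$ in front of $\|U\|_{L^2(\Omega)}$ effectively small relative to the $\|F\|_{L^p}$ term after combining with the $L^2$-energy estimate. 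This bookkeeping of exponents—balancing the $R_1$-power, the $\sqrt\lambda$-weight, and the absorption threshold—is the most delicate step. Once it is carried out, combining all the pieces produces $\|U\|_{L^p(\Omega)} \le C\|F\|_{L^p(\Omega)}$ with $C=C(d,p,M,\Lambda)$, and the dependence on $R_1$ and $K$ is confined to $\lambda_1$, as stated.
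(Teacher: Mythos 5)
Your first step—moving the lower-order terms of $L_\lambda$ to the right-hand side and using the $1/\sqrt{\lambda}$-weight to absorb the resulting $CK\lambda^{-1/2}\|U\|_{L^p}$ contribution—is the right idea for handling the lower-order coefficients. But your plan for the residual term $R_1^{d(1/p-1/2)}\|U\|_{L^2(\Omega)}$ via a global $L^2$-energy estimate has a genuine gap: the energy estimate gives $\|U\|_{L^2(\Omega)}\lesssim\|F\|_{L^2(\Omega)}$, and since $p>2$ you cannot bound $\|F\|_{L^2(\Omega)}$ by $\|F\|_{L^p(\Omega)}$ on an unbounded domain, while on a bounded one H\"older introduces a factor $|\Omega|^{1/2-1/p}$, contaminating the constant $C$, which the corollary asserts depends only on $(d,p,M,\Lambda)$. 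Enlarging $\lambda_1$ cannot rescue this: the $\sqrt\lambda$-weight appears symmetrically in $U$ and $F$, so the ratio $\|U\|_{L^2}/\|F\|_{L^2}$ from the energy estimate gains nothing from large $\lambda$. There is also a subordinate issue at the very start: to invoke Proposition \ref{prop-regularity} you need the modified sources $\tilde f_i=f_i-b_iu$, $\tilde f=f-\hat b_iD_iu-cu$ to lie in $L^2(\Omega)\cap L^p(\Omega)$, but a $W^{1,p}_\cD$ solution with $p>2$ need not lie in $L^2$ on an unbounded domain.

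The paper resolves both difficulties with a single device you omit: localization. It considers $u\zeta$ for $\zeta\in C_c^\infty(B_\varepsilon(x_0))$ with $\varepsilon/R_1$ small, which (a) makes the right-hand side automatically $L^2\cap L^p$ and (b) lets one absorb the $L^2$-remainder by H\"older on a ball of radius $\varepsilon$: $\|U\zeta\|_{L^2}\le C\varepsilon^{d(1/2-1/p)}\|U\zeta\|_{L^p}$, so the prefactor becomes $C(\varepsilon/R_1)^{d(1/2-1/p)}$, small for $\varepsilon/R_1$ small, and can be moved to the left. The commutator terms generated by $\zeta$ (involving $uD\zeta$, $Du\,D\zeta$) and the lower-order terms are then absorbed after summing over a finite-overlap partition of unity and taking $\lambda$ large—this is where $\lambda_1$ actually earns its $R_1$-dependence. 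Your proposal needs this localization step; without it the argument does not close.
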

The proof of the above corollary is the same as \cite[Corollary~5.2]{CDL}, which we will sketch below. We first localize by considering the equation for $u\zeta$, where $\zeta\in C_0^\infty(B_\varepsilon(x_0))$ with $x_0\in \overline\Omega$ and $\varepsilon/R_1$ sufficiently small. To apply Proposition \ref{prop-regularity}, notice that $u\zeta$ still satisfies a mixed Dirichlet-conormal problem in the form of \eqref{eqn-07282306}, if we move all the lower order terms on both the equation and the conormal boundary condition to the right-hand side. Now we can absorb the $L^2$ norm in \eqref{est-no-lower-order} by applying H\"older's inequality and choosing $\epsi$ sufficiently small. Then we use the partition of unity and choose $\lambda$ sufficiently large to remove the cut-off and absorb the $L^p$ norms of $Du$ and $u$ on the right-hand side as usual.

Now we are ready to give the proof of Theorem \ref{thm-wellposedness}.
\begin{proof}[Proof of Theorem \ref{thm-wellposedness}]
We first prove (i).

{\em Case 1:} $p=2$. This is by applying the Lax-Milgram lemma directly. In this case, actually we only need to choose $\lambda>\lambda_2(d, K,\Lambda)$ for a sufficiently large $\lambda_2$, without imposing any regularity assumptions on the coefficients and domain.

{\em Case 2:} $p\in(2,2(m+2)/(m+1)$. For $\lambda>\lambda_0:=\max\{\lambda_1,\lambda_2\}$, we have both $W^{1,2}$-solvability and $W^{1,p}$-a priori estimate Corollary \ref{cor-regularity}. Hence we only need to prove the existence of $W^{1,p}$ solutions. We first prove the solvabilty for \eqref{eqn-07282306}, noting that the only problem here is $L^p\not\subset L^2$ since our domain can be unbounded. This is solved by approximation. We approximate $f$ and $f_i$ by $f^{(n)}$ and $f^{(n)}_i$ strongly in $L^p(\Omega)$ with $f^{(n)}, f^{(n)}_i\in L^p\cap L^2$. Associated to these right-hand side functions, we can find solution $u^{(n)}\in W^{1,2}_\cD(\Omega)$ . By Proposition \ref{prop-regularity}, we have $u^{(n)}\in W^{1,p}(\Omega)$. Furthermore, by Corollary \ref{cor-regularity} $\{u^{(n)}\}_n$ is a Cauchy sequence in the space $W^{1,p}(\Omega)$. Clearly the limit $u\in W^{1,p}(\Omega)$ must be a solution to \eqref{eqn-07282306}.

From this, the original problem \eqref{eqn-07261524} can be solved by applying the method of continuity to the family of operators $tL^{(0)}_\lambda + (1-t)L_\lambda$ (and the corresponding mixed boundary condition).

{\em Case 3:} $p\in (2(m+2)/(m+3),2)$. This can be obtained from Case 2 by duality. Such argument can be found in \cite[Proof~of~Theorem~2.4]{CDL}.

Now we prove (ii). We need to lower the integrability condition for $f$ and reduce the large $\lambda$ condition to the usual sign condition $L_01\leq 0$. For the first one, we solve the follow divergence form equation for $f\in L^{p_*}(\Omega)$:
\begin{equation*}
\dv \varphi=f\,\,\text{in}\,\,\Omega,\quad \text{where}\ \ \varphi=(\varphi_i)_{i=1}^d\in W^{1,p_{*}}_{\cN}(\Omega).
\end{equation*}
Such $\varphi_i$ exists due to \cite[Lemma~7.2]{CDL}, and it satisfies
\begin{equation*}
\norm{\varphi_i}_{L^p(\Omega)}\leq C(\diam(\Omega),R_1,d,p,M)\norm{f}_{L^{p_*}(\Omega)}.
\end{equation*}
Then we solve the equation with right-hand side $D_i(f_i+\varphi_i)$. Finally, we apply the Fredholm alternative to obtain the unique solvability as well as the estimate, noting that on bounded domain the weak maximum principle holds.
\end{proof}

\section{Inhomogeneous boundary data}\label{sec-ntm}

In this section, we consider the inhomogeneous boundary value problem \eqref{190527@eq2} and prove Theorem \ref{thm-inhomo}. For this, we first solve the problem in $L^{1+\epsi}$ by applying \cite[Theorem~1.1]{TOB}. Then we improve the regularity of this solution with the help of the reverse H\"older inequality in Corollary \ref{cor-10182245}.
\subsection{Notation}
Let $\Omega$ be a Lipschitz domain locally represented by $x^1>\psi_0(x')$. For $x=(x^1,x')\in\bR\times\bR^{d-1}$, we denote the lifting up and the projection maps as
$$\Psi_0(x'):=(\psi_0(x'),x'),\quad \bP_{d-1}(x^1,x'):=x'.$$
By the definition, for any $(\gamma,m)$-flat $\Gamma$, the projection $\bP_{d-1}\Gamma$ locally is also $(\gamma,m)$-flat, of co-dimension $1$.

We consider two types of neighborhoods on the boundary: surface cubes and surface balls. We say that $Q\subset \p\Omega$ is a surface cube if $Q=\Psi_0(Q')$, where $Q'$ is a cube in $\bR^{d-1}$. For surface cubes, we denote $kQ:=\Psi_0(kQ')$. We also consider the ``cylinder'' over $Q$:
$$T(Q):=\{y\in\Omega:\dist(y,Q)<r_Q,\bP(y)\in\bP(Q)\},\quad \text{where}\,\,r_Q=\diam(Q).$$
For any point $x\in\p\Omega$, we denote $$\Delta_r(x):=B_r(x)\cap\p\Omega$$ to be the surface ball and
\begin{equation*}
\delta(x):=\dist(x,\Gamma).
\end{equation*}
In this section, we need the following truncated non-tangential maximal functions:
\begin{equation}\label{eqn-191231-1658}
\vec{N}_{r}(f)(x):=\sup_{y\in\Gamma_\beta(x)\cap B_r(x)} |f(y)|,\quad \vec{N}^r(f)(x):=\sup_{y\in\Gamma_\beta(x)\cap B_r^c(x)} |f(y)|.
\end{equation}
\subsection{An $L^{1+\epsi}$-solvability}
In this part we check that our assumptions on $\Gamma$ imply those in \cite[Theorem~1.1]{TOB}. As a corollary, the following $L^{1+\epsi}$-solvability holds.
\begin{lemma}\label{lem-1115-2129}
Let $m\in \{0,1,\ldots,d-2\}$ and $\Omega\subset \bR^d$ be a bounded domain satisfying Assumption \ref{ass-small-Lip}$(M)$. Then we can find $\gamma=\gamma(d,M)$, such that if $\Gamma$ is $(\gamma,m)$-flat, the following hold.
\begin{enumerate}
\item For any $p> 1$, \eqref{190527@eq2} has at most one solution.
\item There exists some constant $q_0=q_0(d,M)>1$, such that for any $g_\cN\in L^{q_0}(\cN)$ and $g_{\cD}\in W^{1,q_0}(\cD)$, the problem \eqref{190527@eq2} has a unique solution $u$ satisfying
\begin{equation*}
\norm{\vec{N}(Du)}_{L^{q_0}(\p\Omega)} \le C \norm{g_\cN}_{L^{q_0}(\cN)} + C\norm{g_\cD}_{W^{1,q_0}(\cD)},
\end{equation*}
where $C=C(d,M,R_0,R_1,\text{diam}(\Omega))$ is a constant.
\end{enumerate}
\end{lemma}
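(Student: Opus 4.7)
The plan is to derive part~(2) by matching the hypotheses against \cite[Theorem~1.1]{TOB}, which gives $L^{1+\epsi}$-solvability of the mixed problem, and then to derive the uniqueness part~(1) separately by an energy identity. Recall that \cite{TOB} requires (i) $\Omega$ a bounded Lipschitz domain, (ii) $\Gamma$ Ahlfors regular of Hausdorff dimension sufficiently close to $d-2$, and (iii) a corkscrew condition on $\cD$ (together with standard accessibility conditions on $\Omega$). Condition~(i) is exactly Assumption~\ref{ass-small-Lip}.

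For~(ii), I would verify Ahlfors $(d-2)$-regularity of $\Gamma$ case by case. When $m=0$, $\Gamma$ is Reifenberg-flat of codimension $2$ inside the $(d-1)$-dimensional Lipschitz surface $\p\Omega$, and small Reifenberg-flat sets of codimension $2$ in a Lipschitz hypersurface are Ahlfors $(d-2)$-regular. When $1\le m\le d-2$, inside each ball $B_R(x_0)$ with $x_0\in\Gamma$, the set $\Gamma$ is within Hausdorff distance $\gamma R$ of the intersection of $\p\Omega$ with $\{x^2=\phi(x^3,\ldots,x^{m+2})\}$, which is a $(d-2)$-dimensional Lipschitz surface with constants controlled by $M$; a $\gamma R$-perturbation of this model in the Hausdorff sense preserves the upper and lower $(d-2)$-measure bounds provided $\gamma\le\gamma(d,M)$ is small. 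For~(iii), given $x_0\in\Gamma$ and a scale $R\le R_1$, the $(\gamma,m)$-flat separation places a surface ball on $\p\Omega$ of radius comparable to $R$ entirely inside $\cD$, and pushing into $\Omega$ along the Lipschitz graph direction by a distance $\sim R$ yields a corkscrew point for $\cD$; the analogous construction works for $\cN$.

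For~(1), suppose $u$ solves \eqref{190527@eq2} with $g_\cN=g_\cD=0$ and $\vec N(Du)\in L^p(\p\Omega)$ for some $p>1$. Since $u$ has vanishing non-tangential boundary values on $\cD\ne\emptyset$, integrating $|Du|$ along non-tangential approach segments emanating from $\cD$ gives the pointwise bound $\vec N(u)(x)\le C\,\diam(\Omega)\,(\vec N(Du)(x)+\cM_{\p\Omega}(\vec N(Du))(x))$, where $\cM_{\p\Omega}$ is the surface maximal operator; combined with the boundedness of $\cM_{\p\Omega}$ on $L^p$, this yields $\vec N(u)\in L^p(\p\Omega)$ as well. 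Exhausting $\Omega$ by smoother Lipschitz subdomains $\Omega_\eta$ obtained by shifting the local graph inward by $\eta$, and applying Green's identity,
\begin{equation*}
\int_{\Omega_\eta}|Du|^2\,dx=\int_{\p\Omega_\eta}u\,\frac{\p u}{\p\vec{n}_\eta}\,d\sigma,
\end{equation*}
the boundary integral is bounded by $\|\vec N(u)\|_{L^{p'}(\p\Omega)}\|\vec N(Du)\|_{L^p(\p\Omega)}$ uniformly in $\eta$, and by dominated convergence with the dominant $\vec N(u)\vec N(Du)\in L^1(\p\Omega)$ its limit as $\eta\to 0$ is $\int_\cN u\,\p_{\vec n}u\,d\sigma+\int_\cD u\,\p_{\vec n}u\,d\sigma=0$, since the non-tangential limits of the integrands vanish a.e.\ on $\cN\cup\cD$. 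Hence $Du\equiv 0$ in $\Omega$, and $u\equiv 0$ by the vanishing Dirichlet data on $\cD$.

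The main obstacle I expect is step~(ii) in the intermediate range $1\le m<d-2$: closeness only to a lower-dimensional Lipschitz graph (of dimension $m$) must be combined with the Lipschitz structure of $\p\Omega$ itself to recover the correct $(d-2)$-dimensional Hausdorff measure bounds, and the smallness $\gamma=\gamma(d,M)$ has to be chosen so that the Hausdorff perturbation does not degrade the Ahlfors constants. A secondary technical point is the passage to the limit $\eta\to 0$ in~(1), which requires the Lipschitz-shifted subdomains $\Omega_\eta$ to be constructed so that the non-tangential cones relative to $\Omega_\eta$ remain uniformly contained in cones relative to $\Omega$, allowing $\vec N(u)$ and $\vec N(Du)$ to serve as uniform dominants.
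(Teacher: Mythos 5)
Your overall strategy---match the hypotheses of \cite[Theorem~1.1]{TOB} and invoke that result---is the same as the paper's. The corkscrew verification is correct in spirit. However, the verification of the Ahlfors regularity hypothesis contains a genuine error, and the separate energy argument for uniqueness is unnecessary (and also not clearly closable as written).

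The key gap: you try to prove Ahlfors $(d-2)$-regularity of $\Gamma$, claiming that closeness to a $(d-2)$-dimensional Lipschitz surface in the Hausdorff sense ``preserves the upper and lower $(d-2)$-measure bounds.'' This is false. Hausdorff-distance closeness at every scale is a Reifenberg-type condition, and Reifenberg-flat sets need not have locally finite $(d-2)$-Hausdorff measure at all --- a snowflake-type construction is $\gamma$-flat for small $\gamma$ yet has Hausdorff dimension strictly larger than $d-2$, hence $\cH^{d-2}(\Gamma\cap B)=\infty$. So the upper $(d-2)$-measure bound cannot be established under Assumption \ref{ass-separation}. The paper correctly aims only for Ahlfors $(d-2+\epsi)$-regularity (with $\gamma$ chosen depending on $\epsi$), which is provable by a covering argument: $(\gamma,m)$-flatness lets one cover $\Delta_r(x)\cap\Gamma$ by $C(d,M)\gamma^{-(d-2)}$ balls of radius $\gamma r$, iterating gives $\cH^{d-2+\epsi}(\Delta_r(x)\cap\Gamma)\lesssim r^{d-2+\epsi}$ once $\gamma^\epsi<1/C$, and this weaker regularity is all that \cite{TOB} requires. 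Your $m=0$ sub-claim (``small Reifenberg-flat sets of codimension $2$ in a Lipschitz hypersurface are Ahlfors $(d-2)$-regular'') is wrong for the same reason. Also note that Ahlfors regularity involves a lower bound as well, which neither your argument nor the paper's explicitly addresses (though it follows easily from the separation condition and is not the delicate part).

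On part~(1): the paper does not run a separate uniqueness argument; uniqueness is part of the conclusion of \cite[Theorem~1.1]{TOB} once the geometric hypotheses are verified, for all $p>1$. Your Green's identity approach is a legitimately different route in principle, but it requires more care than given: the pointwise bound $\vec N(u)\lesssim \vec N(Du)+\cM_{\p\Omega}(\vec N(Du))$ needs a chain of non-tangential cones connecting each boundary point to $\cD$ (using accessibility of $\cD$), and the limit $\eta\to 0$ requires constructing the inner approximating domains so that their non-tangential cones are uniformly contained in cones for $\Omega$, a fact you flag but do not establish. Since \cite{TOB} already delivers uniqueness, this extra machinery is not needed.
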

For this, first we can simply check that if $\Gamma$ is $(\gamma,m)$-flat, then $\cD$ satisfies the corkscrew condition relative to $\p\Omega$ in \cite{TOB} with the parameter $2/(1-\gamma)$. Now fix any small $\epsi>0$, we are going to find the constant $\gamma(\epsi,d)$, such that $(\gamma,m)$-flat implies Ahlfors $(d-2+\epsi)$-regular.

For any $x\in\Gamma$ and $r\in(0,R_1]$, due to the definition of $(\gamma,m)$-flatness at radius $r$, it can be easily verified that the surface ball $\Delta_r(x)\cap \Gamma$ can be covered by $C/\gamma^{d-2}$ balls of radius $\gamma r$, with $C$ depending only on $(d,M)$. Iterating $k$ times, we get $(C/\gamma^{d-2})^k$ number of balls of radius $\gamma^kr$, the union of which covers $\Delta_r(x)\cap\Gamma$. Thus,
\begin{equation*}
\cH^{d-2+\epsi}(\Delta_r(x)\cap\Gamma)\leq \sup_k \set{C_0 (C/\gamma^{d-2})^k(\gamma^kr)^{d-2+\epsi}}=\sup_k \set{C_0(C\gamma^{\epsi})^kr^{d-2+\epsi}}.
\end{equation*}
If we take $\gamma^{\epsi}<1/C$, the above quantity is less than $C_0 r^{d-2+\epsi}$. This means that $\Gamma$ is Ahlfors $(d-2+\epsi)$-regular. Now we have verified the conditions for \cite[Theorem~1.1]{TOB}, and thus Lemma \ref{lem-1115-2129} holds.
\subsection{A reverse H\"older inequality on boundary}
On a surface cube $Q_0$ lying in a coordinate chart, consider a function $v$ satisfying
\begin{equation}\label{eqn-190603-1}
\begin{cases}
\Delta v = 0  & \text{in }\, \Omega,\\
\frac{\p v}{\p n} = 0  & \text{on }\, \cN\cap Q_0,\\
v = 0 & \text{on }\, \cD\cap Q_0,\\
\vec{N}(Dv)\in L^{q_1}(\p\Omega),
\end{cases}
\end{equation}
where $q_1\in(1,q)$ is some constant. In this subsection, we prove the following result which plays a key role in proving Theorem \ref{thm-inhomo}.
\begin{proposition}\label{prop-190603-1}
Suppose that $v$ satisfies \eqref{eqn-190603-1}. Then for any $q<(m+2)/(m+1)$, we can find sufficiently small $\gamma_1>0$, such that if $\Omega$ and $\Gamma$ satisfy the Assumptions \ref{ass-RF}$(\gamma_1)$ and \ref{ass-separation}$(\gamma_1,m)$, then for any surface cube with $8Q\subset Q_0$, we have $\vec{N}(Dv)\in L^q(Q)$ satisfying
\begin{equation}\label{est-190603-1642}
\Bigg(\dashint_{Q} \vec{N}(Dv)^q\Bigg)^{1/q} \lesssim \dashint_{8Q} \vec{N}(Dv).
\end{equation}
\end{proposition}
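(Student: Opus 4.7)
The plan is to combine classical interior estimates for the harmonic gradient $Dv$ with the improved interior $L^{s}$-$L^{1}$ reverse Hölder inequality of Corollary \ref{cor-10182245}, converting bulk regularity into a boundary reverse Hölder for $\vec{N}(Dv)$ by means of a Whitney-type decomposition of $\Omega$.

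Following the truncation notation of \eqref{eqn-191231-1658}, I would first split $\vec{N}(Dv)(x)\leq \vec{N}_{cr_Q}(Dv)(x)+\vec{N}^{cr_Q}(Dv)(x)$ for $x\in Q$. The far piece is easy to handle: any competing $y\in\Gamma_\beta(x)$ with $|y-x|\geq cr_Q$ satisfies $\delta(y)\gtrsim r_Q$, so the surface ball of radius comparable to $r_Q$ about the nearest boundary point to $y$ still sees $y$ through a slightly wider non-tangential cone. Averaging over this surface ball, which (up to a dilation of the opening) sits inside $8Q$, yields the pointwise bound $\vec{N}^{cr_Q}(Dv)(x)\lesssim \dashint_{8Q}\vec{N}(Dv)\,d\sigma$ for all $x\in Q$.

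The main work is for the near part $\vec{N}_{cr_Q}(Dv)$. I would take a Whitney decomposition $\Omega=\bigcup_{k}B_{k}$ with $B_{k}=B_{r_{k}}(y_{k})$ and $r_{k}\sim\dist(y_{k},\p\Omega)$. Since $Dv$ is harmonic, the interior estimate $\sup_{B_{k}}|Dv|\leq Cr_{k}^{-d/s}\|Dv\|_{L^{s}(2B_{k})}$ holds for any $s\geq 1$. Using this together with the fact that the ``shadow'' $\{x\in Q:\Gamma_\beta(x)\cap B_k\ne\emptyset\}$ has surface measure $\lesssim r_k^{d-1}$, one controls $\int_{Q}\vec{N}_{cr_Q}(Dv)^{q}d\sigma$ by a weighted sum $\sum_k r_k^{d-1-dq/s}\|Dv\|_{L^s(2B_k)}^q$ over Whitney balls in the tent $T(CQ)\cap\{\delta\leq cr_Q\}$. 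Choosing $s$ in the admissible range $(2,2(m+2)/(m+1))$ of Corollary \ref{cor-10182245}, I would dyadically decompose the tent in $\delta$, cover each dyadic annulus by $\sim 2^{j(d-1)}$ boundary-centered balls of radius $\sim 2^{-j}r_Q$, and apply Corollary \ref{cor-10182245} on each to replace $L^{s}$-integrals of $|Dv|$ by $L^{1}$-integrals. A Fubini-type reduction along non-tangential segments then relates $\int_{\Omega_{Cr_Q}(x_0)}|Dv|$ to $r_Q\int_{8Q}\vec{N}(Dv)\,d\sigma$, producing \eqref{est-190603-1642}.

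The main obstacle is keeping the arithmetic of exponents in balance so that the dyadic sum over annuli converges and the various scales collapse to a single $r_Q$-factor of the correct dimensional weight. The restriction $q<(m+2)/(m+1)$, which is exactly half of the $L^{s}$-ceiling $s<2(m+2)/(m+1)$ in Corollary \ref{cor-10182245}, reflects the codimension-one trace loss incurred when one passes from a bulk $L^{s}$-estimate for $|Dv|$ to a boundary $L^{q}$-estimate for $\vec{N}(Dv)$.
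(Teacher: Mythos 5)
The far-truncation piece $\vec{N}^{cr_Q}(Dv)$ is handled essentially as in the paper (the paper's argument with the sets $\Lambda_\beta(z)$), and your observation about the trace loss being the source of the $1/2$ factor is the right heuristic. However, the main step for the near part $\vec{N}_{cr_Q}(Dv)$ has a genuine gap: the bound you start from,
\[
\int_{Q}\vec{N}_{cr_Q}(Dv)^{q}\ \lesssim\ \sum_{k} r_k^{\,d-1-dq/s}\,\|Dv\|_{L^{s}(2B_k)}^{q},
\]
which comes from replacing the supremum in the definition of $\vec{N}$ by a sum over Whitney balls, is lossy to the point of divergence. Each $x\in Q$ lies in the shadow of roughly one Whitney ball \emph{per dyadic scale}, so the overlap of the shadows is unbounded. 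Already for $|Dv|\equiv 1$ your right-hand side is $\sim\sum_k r_k^{d-1}$, which is infinite (each dyadic annulus contributes $\sim r_Q^{d-1}$, and there are infinitely many scales), while the left-hand side is $\sim r_Q^{d-1}$. Applying Corollary \ref{cor-10182245} at each Whitney ball only converts $\|Dv\|_{L^s(2B_k)}$ to $r_k^{-d/s'}\|Dv\|_{L^1(CB_k)}$, which again leaves a sum of the form $\sum_k r_k^{d-1}(\dashint_{CB_k}|Dv|)^q$; there is no decay factor in the scale index $j$ to make this converge. The ``Fubini'' relation $\int_{T(CQ)}|Dv|\lesssim r_Q\int_{8Q}\vec{N}(Dv)$ supplies only a single factor of $r_Q$ and does not repair the missing geometric decay.

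The missing ingredient is exactly what the paper builds in with the weight $\delta^{1-\varepsilon'}$: Lemma \ref{lem-191019-0034} states
\[
\int_{Q}|Dv|^2\,\delta^{1-\varepsilon'}\ \lesssim\ \int_{T(2Q)}|Dv|^2\,\delta^{-\varepsilon'},
\]
which is proved by a Whitney decomposition of the \emph{boundary} surface cube $Q\setminus\Gamma$ together with Lemma \ref{lem-10182100} (the $[OB]$/$[JK81]$ local $L^2$ estimate for purely Dirichlet/conormal data, which carries the needed $r_{Q_j}^{-1}$ factor). The weight $\delta^{1-\varepsilon'}$ versus $\delta^{-\varepsilon'}$ is precisely the one extra power of $\delta$ that renders the dyadic (boundary Whitney) sum convergent. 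Then, in the proof of the Proposition, H\"older's inequality is applied twice — once on $Q$ to absorb $\delta^{(\varepsilon'-1)/(2/q-1)}$ (integrable since $\Gamma$ has codimension $1$ in $\p\Omega$), and once on $T(2Q)$ to absorb $\delta^{-\varepsilon'/(1-2/p)}$ (integrable since $\Gamma$ has codimension $2$ in $\Omega$) — which is how the constraint $q<(m+2)/(m+1)$ and the bulk range $p<2(m+2)/(m+1)$ from Corollary \ref{cor-10182245} are actually reconciled. Your plan as written does not have this mechanism, and without it the annular sum does not close. You also did not supply the step passing from the boundary values of $|Dv|$ back to $\vec{N}(Dv)$; the paper does this via the $L^q$ Dirichlet regularity estimate on the Lipschitz cylinder $T((1+t)Q)$ followed by averaging in $t$.
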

We start with the following lemma which relates the boundary norm and the interior norm, for harmonic functions with purely Dirichlet or conormal boundary data.
\begin{lemma}\label{lem-10182100}
Suppose that $v$ is a harmonic function in $T(2Q)$ with either $\p v/\p \vec{n}=0$ or $v=0$ on $2Q$, then we have $\vec{N}_{r_Q}(Du)\in L^2(Q)$ satisfying
\begin{equation*}
\big(\vec{N}_{r_Q}(Dv)^2\big)_Q^{1/2} \leq C (|Dv|^2)_{T(2Q)}^{1/2},
\end{equation*}
where $C=C(d,M)$ is a constant.
\end{lemma}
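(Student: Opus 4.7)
The plan is to localize the problem using a cutoff and reduce to the classical $L^2$ non-tangential maximal function estimate for the inhomogeneous Dirichlet or Neumann problem on Lipschitz subdomains. First I would introduce a smooth cutoff $\eta$ with $\eta\equiv 1$ on $T(Q)$, $\supp\eta\subset T(3Q/2)$, $\supp\eta\cap\p\Omega\subset 2Q$, and $|D\eta|+r_Q|D^2\eta|\lesssim r_Q^{-1}$. The product $w:=\eta v$ inherits the homogeneous boundary condition on $2Q$ (because $\eta\equiv 1$ in a neighborhood of $Q$, so the cutoff factor contributes nothing to the trace or the conormal derivative near $Q$), vanishes on the remaining portion of $\p(T(2Q)\cap\Omega)$, and satisfies
\begin{equation*}
-\Delta w = F := -2\,D\eta\cdot Dv - v\,\Delta\eta \quad\text{in } T(2Q)\cap\Omega,
\end{equation*}
with $F$ supported in the annular region $T(3Q/2)\setminus T(Q)$, bounded away from $2Q$.

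Second, I would invoke the classical scale-invariant $L^2$ non-tangential maximal function estimate of Jerison--Kenig (Neumann case) and Verchota (Dirichlet case) for the inhomogeneous problem with zero boundary data on a Lipschitz domain, applied to $w$:
\begin{equation*}
\|\vec N(Dw)\|_{L^2(\p(T(2Q)\cap\Omega))}\le C\|F\|_{L^{2d/(d+1)}(T(2Q)\cap\Omega)},
\end{equation*}
where the exponent $2d/(d+1)$ is forced by scaling and $C=C(d,M)$ depends only on the Lipschitz constant of $\p\Omega$. Then H\"older's inequality applied to $D\eta\cdot Dv$ (with $D\eta\in L^{2d}(\supp D\eta)$ and $Dv\in L^2$) gives $\|D\eta\cdot Dv\|_{L^{2d/(d+1)}}\lesssim r_Q^{-1/2}\|Dv\|_{L^2(T(2Q))}$; similarly, for $v\,\Delta\eta$, after replacing $v$ by $v-c$ with $c$ chosen as zero in the Dirichlet case or the mean value on the annular region $\supp \Delta\eta$ in the Neumann case, Poincar\'e's inequality (the zero-trace version from Lemma \ref{lem-poincare} in the Dirichlet case, the standard version in the Neumann case) yields $\|(v-c)\Delta\eta\|_{L^{2d/(d+1)}}\lesssim r_Q^{-1/2}\|Dv\|_{L^2(T(2Q))}$. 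Combining,
\begin{equation*}
\|\vec N(Dw)\|_{L^2(\p(T(2Q)\cap\Omega))}^2 \le C r_Q^{-1}\|Dv\|_{L^2(T(2Q))}^2.
\end{equation*}

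Third, since $\eta\equiv 1$ on $T(Q)$ we have $Dw=Dv$ there, and for $x\in Q$ the truncation $\vec N_{r_Q}(Dv)(x)$ only sees the cone $\Gamma_\beta(x)\cap B_{r_Q}(x)\subset T(Q)$, so $\vec N_{r_Q}(Dv)|_Q \le \vec N(Dw)|_Q$ up to a harmless adjustment of the cone opening. Dividing the previous inequality by $|Q|\sim r_Q^{d-1}$ (while $|T(2Q)|\sim r_Q^d$) recovers the desired averaged estimate $(\vec N_{r_Q}(Dv)^2)_Q^{1/2}\le C(|Dv|^2)_{T(2Q)}^{1/2}$. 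The main obstacle will be to apply the classical Jerison--Kenig/Verchota non-tangential maximal function estimate to the \emph{specific} Lipschitz subdomain $T(2Q)\cap\Omega$ in the inhomogeneous setting with uniform constants; this requires verifying that this auxiliary subdomain has a Lipschitz boundary whose constant is controlled by $M$ uniformly in the scale $r_Q$ (which follows from Assumption \ref{ass-small-Lip} combined with a suitable smooth choice of lateral/top cutoff of the cylinder) and noting that the inhomogeneous version of the estimate follows from the homogeneous one via the Green's function/layer potential representation on Lipschitz domains.
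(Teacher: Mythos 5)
Your cutoff-and-inhomogeneous-source route is genuinely different from what the paper does: the paper simply cites \cite[Lemmas 4.4 and 4.8]{OB}, where the localization of the global $L^2$ estimate from \cite{JK81} is carried out by applying it on a one-parameter family of scaled cylinders and averaging in the parameter, exactly as the paper itself does later at display \eqref{est-191019-1456}. Your strategy could in principle work, but as written it has a real gap in the Neumann case: the claim that $w=\eta v$ ``inherits the homogeneous boundary condition on $2Q$'' is false there. On $2Q$ one has $\p w/\p\vec{n}=\eta\,\p v/\p\vec{n}+v\,\p\eta/\p\vec{n}=v\,\p\eta/\p\vec{n}$, and this does \emph{not} vanish on the transition zone where $\eta$ descends from $1$ near $Q$ to $0$ outside $T(3Q/2)$. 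For a smooth cutoff on a Lipschitz boundary there is no way to force $\p\eta/\p\vec{n}=0$ a.e.: the outer normal is only an $L^\infty$ vector field, so ``taking $D\eta$ tangential'' is unavailable, and a cutoff depending only on $x'$ still produces $\p_{\vec{n}}\eta=-D\psi_0\cdot D_{x'}\eta/\sqrt{1+|D\psi_0|^2}\not\equiv 0$. You would need to carry $v\,\p\eta/\p\vec{n}$ as nonzero Neumann data, and bound $\|v\,\p\eta/\p\vec{n}\|_{L^2(2Q)}$ by $r_Q^{-1/2}\|Dv\|_{L^2(T(2Q))}$ via a trace inequality and Poincar\'e on the annular transition region after subtracting a constant from $v$; none of this appears in your write-up.

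A second, smaller issue is the appeal to a ``classical'' bound $\|\vec{N}(Dw)\|_{L^2(\p D)}\lesssim\|F\|_{L^{2d/(d+1)}(D)}$ for the zero-data Poisson problem on a Lipschitz domain. This is plausible by scaling, but it is not literally in \cite{JK81}: one must split $w$ into a Newtonian potential $E*\tilde F$ plus a harmonic corrector, control the non-tangential maximal function of $D(E*\tilde F)$ (say via the Hardy--Littlewood maximal function), and estimate the trace of $D(E*\tilde F)$ on $\p D$ in $L^2$ through a fractional Sobolev trace theorem, before invoking the homogeneous $L^2$ Dirichlet/Neumann results to handle the corrector. You acknowledge this in your last sentence, but for a complete proof it needs to be spelled out. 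Both difficulties are sidestepped by the family-of-domains averaging argument, which only ever invokes the homogeneous $L^2$ results of \cite{JK81} and never needs a cutoff that respects the conormal derivative.
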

See \cite[Lemma~4.4, 4.8]{OB}. The proof is by localizing the global $L^2$-result in \cite{JK81}, noting that for any $x\in Q$ we have $\Gamma_\beta(x)\cap B_{r_Q}(x)\subset T(2Q)$. Using the above lemma, we can prove the following weighted estimate for harmonic functions with mixed boundary condition. Notice that $\vec{N}(Dv)\in L^{q_1}$ guarantees that $Dv$ is an almost everywhere defined $L^{q_1}$ function on $\p\Omega$.
\begin{lemma}\label{lem-191019-0034}
Suppose that $v$ satisfies \eqref{eqn-190603-1}. In any surface cube $Q$ with $2Q\subset Q_0$ and  $4Q\cap\Gamma\neq\emptyset$, for any $\epsi'$, we have
\begin{equation*}
\int_{Q}|Dv(y)|^2\delta(y)^{1-\epsi'}\,dy \lesssim \int_{T(2Q)}|Dv(z)|^2\delta(z)^{-\epsi'}\,dz.
\end{equation*}
\end{lemma}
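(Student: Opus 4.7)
\bigskip

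\noindent\textbf{Proof proposal.}
The plan is to reduce the weighted boundary estimate to the unweighted, unweighted-boundary vs.\ interior $L^2$ estimate of Lemma \ref{lem-10182100}, via a Whitney-type decomposition of $Q$ with respect to $\Gamma$. The crucial point is that once we decompose into surface subcubes $Q_j$ whose size is comparable to their distance to $\Gamma$, each enlarged piece $2Q_j$ lies entirely inside $\cD$ or entirely inside $\cN$, so locally $v$ satisfies a \emph{pure} boundary condition and Lemma \ref{lem-10182100} applies.

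First, decompose $Q\setminus\Gamma$ into a countable collection of dyadic surface subcubes $\{Q_j\}$ with pairwise disjoint interiors, $r_{Q_j}\sim \dist(Q_j,\Gamma)$, and the property that the enlarged cubes $\{4Q_j\}$ have bounded overlap. Because $\dist(2Q_j,\Gamma)\gtrsim r_{Q_j}$, for $\gamma_1$ small enough the $(\gamma_1,m)$-flatness of $\Gamma$ ensures $2Q_j\subset\cD$ or $2Q_j\subset\cN$. Consequently $v$ solves a pure Dirichlet or pure Neumann problem in $T(2Q_j)$, and Lemma \ref{lem-10182100} yields
\begin{equation*}
\int_{Q_j}|Dv|^2\,d\sigma\;\lesssim\;|Q_j|\,\bigl(|Dv|^2\bigr)_{T(2Q_j)}
\;\sim\;\frac{1}{r_{Q_j}}\int_{T(2Q_j)}|Dv|^2\,dz,
\end{equation*}
where we used $|T(2Q_j)|\sim r_{Q_j}|Q_j|$.

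Next, insert the weight. On $Q_j$ we have $\delta(y)\sim r_{Q_j}$, and similarly $\delta(z)\sim r_{Q_j}$ for $z\in T(2Q_j)$ (both uniformly in $j$, with constants depending on $d$ and $M$). Therefore
\begin{equation*}
\int_{Q_j}|Dv|^2\delta(y)^{1-\epsi'}\,d\sigma
\;\sim\; r_{Q_j}^{1-\epsi'}\!\int_{Q_j}|Dv|^2\,d\sigma
\;\lesssim\; r_{Q_j}^{-\epsi'}\!\int_{T(2Q_j)}|Dv|^2\,dz
\;\sim\;\int_{T(2Q_j)}|Dv|^2\delta(z)^{-\epsi'}\,dz.
\end{equation*}
Summing over $j$ and using the bounded overlap of $\{T(2Q_j)\}\subset T(2Q)$ gives the desired inequality.

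The main obstacle is the geometric bookkeeping in the Whitney decomposition: we need to be sure that the enlargements $2Q_j$ still project into a single side of $\Gamma$, which forces a careful choice of dilation factor relative to the flatness parameter $\gamma_1$, and that $\{T(2Q_j)\}$ really have controlled overlap in the ambient (non-cylindrical) geometry. Both points are standard once one uses the Lipschitz graph structure of $\partial\Omega$ to transfer the decomposition from $\bP_{d-1}Q\subset\bR^{d-1}$; the $(\gamma_1,m)$-flatness of $\Gamma$, together with $\gamma_1\ll 1$, then guarantees the separation condition automatically.
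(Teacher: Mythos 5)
Your proposal is correct and follows essentially the same path as the paper: a Whitney decomposition of $Q\setminus\Gamma$ into surface pieces of size comparable to $\delta$, an application of Lemma~\ref{lem-10182100} on each piece (where the boundary condition is pure), insertion of the weight via $\delta\sim r_{Q_j}$ on both $Q_j$ and $T(2Q_j)$, and a bounded-overlap sum. One small remark: the fact that $2Q_j$ lies entirely in $\cD$ or entirely in $\cN$ follows purely from connectedness and $2Q_j\cap\Gamma=\emptyset$ (which the Whitney property gives directly), not from the $(\gamma_1,m)$-flatness of $\Gamma$; the flatness is not needed at this particular step.
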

In this statement, the case $+\infty\leq+\infty$ is allowed.
\begin{proof}
We first construct a non-intersecting decomposition $Q\setminus\Gamma=\bigcup_j Q_j$, where the surface balls $\{Q_j\}_j$ satisfy
\begin{align}
c''\delta(y)\leq r_{Q_j}\leq c'\delta(y),\,\,\forall y\in\p\Omega,\label{eqn-10182111}\\
\sum\chi_{T(2Q_j)}\leq C(d,M,c''),\label{eqn-10182112}
\end{align}
where $c'=1/(4\sqrt d)$ and $c''=1/(16\sqrt d)$. This can be done by considering the usual Whitney decomposition $\bigcup_j Q_j'$ of $\bP(4Q)\setminus\bP(\Gamma)$ on the coordinate plane $\bR^{d-1}$, then ``lifting up'' back to surface cubes by the map $\Psi_0$. Such decomposition can also be found in \cite[Lemma~4.9]{OB}. 

Now since $c'<1/4$, we have $4Q_j\cap \Gamma=\emptyset$, which means only one type of boundary condition is prescribed on $4Q_j$. Then applying Lemma \ref{lem-10182100}, we obtain
\begin{equation*}
\int_{Q_j}\vec{N}_{r_{Q_j}}(Dv)^2 \leq C r_{Q_j}^{-1}\int_{T(2Q_j)}|Dv|^2,
\end{equation*}
where $\vec{N}_{r_{Q_j}}$ is the truncated non-tangential maximal operator defined in \eqref{eqn-191231-1658}. Using \eqref{eqn-10182111}, the point-wise inequality $|Dv(y)|\leq \vec{N}_{r_{Q_j}}(Dv)(y)$ for each $y\in Q_j$, and $\delta(y)\approx\delta(z)\approx r_{Q_j}$ for each $y\in Q_j$ and $z\in T(2Q_j)$, we have
\begin{equation*}
\int_{Q_j}|Dv(y)|^2\delta(y)^{1-\epsi'} \leq C \int_{T(2Q_j)}|Dv(z)|^2 \delta(z)^{-\epsi'}.
\end{equation*}
The lemma is proved by simply summing over $j$ and using \eqref{eqn-10182112}.
\end{proof}
Now we are in the position of proving Proposition \ref{prop-190603-1}.
\begin{proof}[Proof of Proposition \ref{prop-190603-1}]
We first claim that for any surface cube $Q$ with $4Q\subset Q_0$,
\begin{equation}\label{est-191018-2350}
\Bigg(\dashint_{Q} |Dv|^q\Bigg)^{1/q} \lesssim \dashint_{4Q} \vec{N}(Dv).
\end{equation}

\textbf{Case 1:} $4Q\cap\Gamma=\emptyset$.  Noting that $q<(m+2)/(m+1)<2$, we apply H\"older's inequality, Lemma \ref{lem-10182100}, and a rescaled version of Corollary \ref{cor-10182245} with $p=2$ to obtain
\begin{equation*}
\begin{split}
\Bigg(\dashint_{Q} |Dv|^q\Bigg)^{1/q} \lesssim \Bigg(\dashint_{Q} \vec{N}_{r_{Q}}(Dv)^q\Bigg)^{1/q} &\lesssim \Bigg(\dashint_{Q} \vec{N}_{r_{Q}}(Dv)^2\Bigg)^{1/2}\lesssim  \Bigg(\dashint_{T(2Q)}|Dv|^2\Bigg)^{1/2}\\
&\lesssim \dashint_{T(4Q)}|Dv|\lesssim \dashint_{4Q} |\vec{N}(Dv)|.
\end{split}
\end{equation*}
Here in the first and the last inequality, we used the point-wise inequality in the form
\begin{equation*}
|Dv(y)|\leq |\vec{N}_{r_Q}(Dv(\psi_0(y'),y'))|\leq |\vec{N}(Dv(\psi_0(y'),y'))|,\,\, \forall y=(y^1,y')\in T(4Q).
\end{equation*}

\textbf{Case 2:} $4Q\cap\Gamma\neq\emptyset$. In this case, we estimate as follows, with $p\in(1,2(m+2)/(m+1))$ and $\epsi'$ to be chosen later:
\begin{align}
\Bigg(\dashint_Q |Dv|^q\Bigg)^{1/q}
&\leq \Bigg(\dashint_Q |\nabla v|^2 \delta^{1-\epsi'}\Bigg)^{1/2}
\Bigg(\int_Q \delta^{(\epsi'-1)/(2/q-1)}\Bigg)^{1/q-1/2}\label{eqn-191019-0040-1}\\
&\lesssim \Bigg(\dashint_{T(2Q)}|\nabla v|^2\delta^{-\epsi'}\Bigg)^{1/2} \Bigg(\dashint_Q \delta^{(\epsi'-1)/(2/q-1)}\Bigg)^{1/q-1/2}\label{eqn-191019-0040-2}\\
&\lesssim \Bigg(\dashint_{T(2Q)}|\nabla v|^p\Bigg)^{1/p} \Bigg(\dashint_{T(2Q)}\delta^{(-\epsi')\frac{1}{1-2/p}}\Bigg)^{1/2-1/p}
\Bigg(\dashint_Q \delta^{(\epsi'-1)/(2/q-1)}\Bigg)^{1/q-1/2}\label{eqn-191019-0040-3}\\
&\lesssim \dashint_{T(4Q)}|\nabla v| \lesssim \dashint_{4Q}\vec{N}(Dv).\label{est-190603-1627}
\end{align}
Here, for the inequalities \eqref{eqn-191019-0040-1} and \eqref{eqn-191019-0040-3} we applied H\"older's inequality. In \eqref{eqn-191019-0040-2}, we applied Lemma \ref{lem-191019-0034}. In order to obtain \eqref{est-190603-1627}, we used Corollary \ref{cor-10182245}. To make sure the two integrals in \eqref{eqn-191019-0040-3} involving $\delta$ cancel, we only require that they are both finite. For this, we fix
$$p=\frac{m+2}{m+1}+q\quad\text{and}\quad\epsi'=2-1/q-2/p$$
so that
$$
\frac{-\varepsilon'}{1-2/p}>-2,\quad \frac{\varepsilon'-1}{2/q-1}>-1.
$$
Now we reach \eqref{est-191018-2350}. Note that the only difference between \eqref{est-191018-2350} and \eqref{est-190603-1642} is that we only controlled $Dv$ instead of $\vec{N}(Dv)$. But for harmonic functions actually they are equivalent. For $t\in(1/2,2)$, since $v\in W^{1,q}(\p T((1+t)Q)$ and $T((1+t)Q)$ is a Lipschitz domain, we can apply the classical $L^q$-estimate of the Dirichlet regularity problem in \cite{JK81} to obtain
\begin{equation*}
\Bigg(\dashint_{Q}\vec{N}_{r_Q/2}(Dv)^q\Bigg)^{1/q}
\lesssim \Bigg(\dashint_{\p T(1+t)Q} |D_T u|^q\Bigg)^{1/q}
\lesssim \Bigg(\dashint_{\p T(1+t)Q} |D u|^q\Bigg)^{1/q},
\end{equation*}
where $D_T$ is the tangential derivative, and we also used the fact that
$$\Gamma_\beta(x)\cap B_{r_Q/2}(x)\subset T(1+t)Q,\,\,\forall x\in Q\,\,\text{and}\,\,t>1/2.$$
Now we can take the average for $t\in(1/2,2)$ and apply Corollary \ref{cor-10182245} to obtain
\begin{align}
&\Bigg(\dashint_{Q}\vec{N}_{r_Q/2}(Dv)^q\Bigg)^{1/q}
\lesssim \Bigg(\dashint_{T(2Q)} |D v|^q\Bigg)^{1/q}
+ \Bigg(\dashint_{2Q} |D v|^q\Bigg)^{1/q}\nonumber\\
&\lesssim \Bigg(\dashint_{T(4Q)} |D v|\Bigg)
+ \Bigg(\dashint_{2Q} |D v|^q\Bigg)^{1/q}
\lesssim \dashint_{4Q}\vec{N}(Dv)
+ \Bigg(\dashint_{2Q} |D v|^q\Bigg)^{1/q}.\label{est-191019-1456}
\end{align}
The estimate for $\vec{N}^{r_Q/2}(Dv)$ is easier. For $z\in \Omega$, consider the region
$$\Lambda_{\beta}(z):= \{y\in\p\Omega: z\in\Gamma_\beta(y)\}.$$
Since $\p\Omega$ is Lipschitz, for any $z\in \Gamma_\beta(x)$ with $x\in Q, |z-x|\geq r_Q/2$, we have
\begin{equation}\label{est-191019-1433}
|\Lambda_{\beta}(z)\cap 2Q|\geq Cr^{d-1} \geq C|Q|,
\end{equation}
where $C=C(d,M)$ is a constant. Hence, for any $x\in Q$ and $z\in \Gamma_\beta(X)\cap B_{r_Q/2}^c(x)$, we have
\begin{equation*}
|D v(z)|\leq \dashint_{\Lambda_{\beta}(z)\cap 2Q}\vec{N}(Dv) \lesssim \dashint_{2Q}\vec{N}(Dv),
\end{equation*}
where in the last inequality, we used \eqref{est-191019-1433}. Taking sup in $z\in \Gamma_\beta(x)\cap B_{r_Q/2}^{c}(x)$, we obtain
\begin{equation*}
\vec{N}^{r_Q/2}(Dv)(x)\lesssim \dashint_{2Q}\vec{N}(Dv),\,\,\forall x\in Q.
\end{equation*}
Now taking $L^q$-average for $x\in Q$, we have
\begin{equation}\label{est-190603-2106}
\Bigg(\dashint_Q \vec{N}^{r_Q/2}(Dv)^q\Bigg)^{1/q}
\lesssim \dashint_{2Q}\vec{N}(Dv).
\end{equation}
Combining \eqref{est-191019-1456} and \eqref{est-190603-2106}, then applying \eqref{est-191018-2350} with $Q$ replaced by $2Q$, we have
\begin{equation*}
\begin{split}
\Bigg(\dashint_{Q}\vec{N}(Dv)^q\Bigg)^{1/q}
&\leq \Bigg(\dashint_{Q}\vec{N}_{r_Q/2}(Dv)^q\Bigg)^{1/q}
+ \Bigg(\dashint_{Q}\vec{N}^{r_Q/2}(Dv)^q\Bigg)^{1/q}\\
&\lesssim \dashint_{4Q}\vec{N}(Dv) + \Bigg(\dashint_{2Q} |D v|^q\Bigg)^{1/q}
\lesssim \dashint_{8Q}\vec{N}(Dv).
\end{split}
\end{equation*}
The proposition is proved.
\end{proof}
\subsection{Proof of Theorem \ref{thm-inhomo}}
We first state an interpolation result that will be useful in our proof. It can be simply deduced from \cite[Theorem~3.2]{Shen}. See also Remark 3.3 in the same paper.
\begin{theorem}[\cite{Shen}]\label{thm-shen}
Let $Q_0$ be a surface cube and $F\in L^1(Q_0)$. Let $p_1>1$ and $f\in L^{p_2}(Q_0)$ for some $1<p_2<p_1$. Suppose that for each (dyadic) surface cube $Q\subset \frac{1}{4}Q_0$, there exist two integrable functions $F_Q$ and $R_Q$ on $Q$ such that $|F|\leq |F_Q|+|R_Q|$ on $Q$, and
\begin{equation*}
\Bigg(\dashint_{Q}|R_Q|^{p_1}\Bigg)^{1/p_1}
\leq C_1\Bigg(\dashint_{16Q}|F| + \sup_{Q'\supset Q}\dashint_{Q'}|f|\Bigg),
\end{equation*}
\begin{equation*}
\dashint_{Q}|F_Q|\leq C_2 \sup_{Q'\supset Q}\dashint_{Q'}|f|.
\end{equation*}
Then $F\in L^{p_2}(1/4Q_0)$ and
\begin{equation*}
\Bigg(\dashint_{1/4Q_0}|F|^{p_2}\Bigg)^{1/p_2}
\leq C\Bigg(\dashint_{Q_0}|F| + (\dashint_{Q_0}|f|^{p_2})^{1/p_2}\Bigg),
\end{equation*}
where $C=C(d,M,p_1,p_2,C_1,C_2)$.
\end{theorem}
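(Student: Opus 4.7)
The strategy is to establish this real-variable self-improvement principle (due to Z.~Shen, \cite{Shen}) via a Calder\'on--Zygmund stopping-time decomposition combined with a good-$\lambda$ inequality, carried out on the space of homogeneous type $(\p\Omega,d\sigma)$ with the induced dyadic system of surface cubes. The role of the pointwise splitting $|F|\le|F_Q|+|R_Q|$ is to furnish, on every cube, a decomposition into a part $F_Q$ whose $L^1$-mass is tied to a maximal function of $f$, together with a part $R_Q$ enjoying extra integrability $p_1>p_2$; an interpolation via good-$\lambda$ then trades the $p_1$-bound into a $p_2$-bound for $F$ itself.

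First I normalize so that $|Q_0|=1$. Let $G(\lambda):=|\{x\in\tfrac{1}{4} Q_0:|F(x)|>\lambda\}|$ and let $Mf(x):=\sup_{Q'\ni x,\,Q'\subset Q_0}\dashint_{Q'}|f|$ be the natural cube-maximal function on $Q_0$. For $\lambda>\lambda_0:=\dashint_{Q_0}|F|$ and a large parameter $A$ to be chosen, run a dyadic Calder\'on--Zygmund decomposition of $|F|\chi_{\frac{1}{4} Q_0}$ at level $\lambda$, producing pairwise disjoint maximal dyadic subcubes $\{Q_k\}$ with $\lambda<\dashint_{Q_k}|F|\le C_d\lambda$ and $|F|\le\lambda$ a.e.\ outside $\bigcup_k Q_k$. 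On each $Q_k$ apply the hypothesis $|F|\le|F_{Q_k}|+|R_{Q_k}|$ together with Chebyshev: hypothesis~(i) gives
\[
|\{|R_{Q_k}|>A\lambda/2\}\cap Q_k|\le\frac{(2C_1)^{p_1}}{(A\lambda)^{p_1}}\,|Q_k|\Big(\dashint_{16Q_k}|F|+\sup_{Q'\supset Q_k}\dashint_{Q'}|f|\Big)^{p_1},
\]
while hypothesis~(ii) controls $|\{|F_{Q_k}|>A\lambda/2\}\cap Q_k|$ by $(A\lambda)^{-1}\cdot 2C_2|Q_k|\sup_{Q'\supset Q_k}\dashint_{Q'}|f|$. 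The key structural point is that by maximality of $Q_k$ its dyadic parent satisfies $\dashint_{\widehat Q_k}|F|\le\lambda$, so $\dashint_{16Q_k}|F|\le C_d\lambda$; this converts the $\dashint_{16Q_k}|F|$ factor into a harmless multiple of $\lambda$, which is precisely the gain that powers the good-$\lambda$ inequality.

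Summing over $k$ and using the pointwise domination $\sup_{Q'\supset Q_k}\dashint_{Q'}|f|\le Mf(x)$ for $x\in Q_k$, I arrive at the good-$\lambda$ inequality
\[
G(A\lambda)\le\frac{C}{A^{p_1}}\,G(\lambda)+\Big|\big\{x\in\tfrac{1}{4} Q_0:Mf(x)>c\lambda/A\big\}\Big|.
\]
Multiplying by $p_2\lambda^{p_2-1}$, integrating $\lambda\in(\lambda_0,N)$ for a truncation $N<\infty$, and changing variables on the left, the $G$-term on the right acquires a prefactor $CA^{p_2-p_1}$; since $p_2<p_1$, this is made $<1/2$ by fixing $A$ large, so it can be absorbed into the left-hand side. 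The $Mf$-term is controlled by the Hardy--Littlewood theorem, $\|Mf\|_{L^{p_2}}\lesssim\|f\|_{L^{p_2}}$; letting $N\to\infty$ and undoing the normalization yields the claimed bound $(\dashint_{\frac{1}{4} Q_0}|F|^{p_2})^{1/p_2}\lesssim\dashint_{Q_0}|F|+(\dashint_{Q_0}|f|^{p_2})^{1/p_2}$.

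The main obstacle is twofold: (a)~setting up the Calder\'on--Zygmund and maximal-function machinery uniformly on the space of homogeneous type $(\p\Omega,d\sigma)$, which requires fixing a suitable dyadic system of surface cubes on the Lipschitz boundary so that the stopping-time decomposition and the strong $(p_2,p_2)$ boundedness of $Mf$ both apply; and (b)~executing the absorption rigorously, i.e., the a priori truncation at level $N$ ensuring the integrals to be absorbed are finite, followed by passage to the limit $N\to\infty$. Neither step is particularly deep on its own, but together they constitute the real-variable core of Shen's argument.
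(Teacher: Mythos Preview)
The paper does not supply its own proof of this theorem: it is stated as a quotation of \cite[Theorem~3.2]{Shen} (together with Remark~3.3 there), with the additional remark that ``essentially this is the boundary version of the argument used by us earlier in Section~\ref{sec-W1p}.'' Your outline is correct and is precisely Shen's approach---a Calder\'on--Zygmund stopping-time decomposition on the dyadic surface cubes feeding a good-$\lambda$ inequality, with absorption made possible by $p_2<p_1$---so there is nothing to compare against. One point you could tighten: the inference ``$\dashint_{\widehat Q_k}|F|\le\lambda$ implies $\dashint_{16Q_k}|F|\le C_d\lambda$'' is not immediate, since $16Q_k$ is much larger than the dyadic parent; in Shen's argument this is handled by choosing $\lambda_0$ with a sufficiently large constant in front of $\dashint_{Q_0}|F|$ so that every stopping cube $Q_k$ has an ancestor comparable to $16Q_k$ still lying in $Q_0$, and the maximality of $Q_k$ then applies to that ancestor.
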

Essentially this is the boundary version of the argument used by us earlier in Section \ref{sec-W1p}.

Before giving the proof of Theorem \ref{thm-inhomo}, we first make some reduction. Since $\cD$ is a $W^{1,q}$-extension domain, we can extend $g_\cD\in W^{1,q}(\cD)$ to $\overline{g}_{\cD}\in W^{1,q}(\p\Omega)$. Also, we can extend $g_\cN\in L^q(\cN)$ by zero to $\overline{g}_{\cN}\in L^q(\p\Omega)$. According to \cite[Corollary~2.12]{Kenig-book}, the following $L^q$ Dirichlet regularity problem has a unique solution for any $1<q<2+\epsi$:
\begin{equation*}
\begin{cases}
-\Delta \overline{u} = 0  & \text{in }\, \Omega,\\
\overline{u} = \overline{g}_\cD & \text{on }\, \p\Omega,\\
\vec{N} (D\overline{u}) \in L^q(\p\Omega),
\end{cases}
\end{equation*}
with
\begin{equation*}
\norm{\vec{N}(D\overline{u})}_{L^q(\p\Omega)}\lesssim \norm{\overline{g}_\cD}_{W^{1,q}(\p\Omega)} \lesssim \norm{g_\cD}_{W^{1,q}(\cD)}.
\end{equation*}
Hence, without loss of generality, we can assume $g_\cD=0$ and $g_\cN\in L^q(\p\Omega)$.
\begin{proof}[Proof of Theorem \ref{thm-inhomo}]
From Lemma \ref{lem-1115-2129}, we can find some $q_0$ slightly larger than $1$ and a solution to the mixed problem with $g_\cN\in L^q\subset L^{q_0}$ and $g_\cD=0$, satisfying
\begin{equation}\label{est-191020-0018}
\norm{\vec{N}(Du)}_{L^{q_0}(\p\Omega)}\lesssim \norm{g_\cN}_{L^{q_0}(\cN)}.
\end{equation}
We are left to show that this solution satisfies $\vec{N}(Du)\in L^q$ for any $q\in(1,(m+2)/(m+1))$ and
\begin{equation*}
\norm{\vec{N}(Du)}_{L^q(\p\Omega)} \lesssim \norm{g_\cN}_{L^q(\cN)},
\end{equation*}
if we choose the parameters $\gamma_1$ in the Assumptions \ref{ass-RF} and \ref{ass-separation} sufficiently small.

For any surface cubes $Q_0$ and $Q$ with $16Q\subset Q_0$, we take a cutoff function $\eta \in C^\infty_c(16Q)$ with $\eta=1$ in $8Q$, and solve the following equation
\begin{equation*}
\begin{cases}
\Delta w = 0  & \text{in }\, \Omega,\\
\frac{\p w}{\p n} = \eta g_\cN  & \text{on }\, \cN,\\
w = 0 & \text{on }\, \cD,\\
\vec{N}(Dw) \in L^{q_0}(\p\Omega).
\end{cases}
\end{equation*}
Again by Lemma \ref{lem-1115-2129} such $w$ exists and satisfies the following estimate
\begin{equation*}
\Bigg(\dashint_{8Q} \vec{N}(Dw)^{q_0}\Bigg)^{1/q_0}
\lesssim \Bigg(\dashint_{16Q} |g_\cN|^{q_0}\Bigg)^{1/q_0}.
\end{equation*}
Then $v:=u-w$ satisfies \eqref{eqn-190603-1} with $Q_0$ replaced by $8Q$. Hence from Proposition \ref{prop-190603-1} with $q$ replaced by some $q+\epsi\in(q,(m+2)/(m+1))$, we have
\begin{equation*}
\begin{split}
\Bigg(\dashint_{Q} \vec{N}(Dv)^{q+\epsi}\Bigg)^{1/(q+\epsi)}
&\lesssim \dashint_{8Q} \vec{N}(Dv) \lesssim \dashint_{8Q} \vec{N}(Du) + \dashint_{8Q} \vec{N}(Dw)\\
&\lesssim \dashint_{8Q} \vec{N}(Du) + \Bigg(\dashint_{8Q} \vec{N}(Dw)^{q_0}\Bigg)^{1/q_0}
\lesssim \dashint_{8Q} \vec{N}(Du) + \Bigg(\dashint_{16Q} |g_\cN|^{q_0}\Bigg)^{1/q_0}.
\end{split}
\end{equation*}
Now we apply Theorem \ref{thm-shen} with
$$F=\vec{N}(Du)^{q_0},\quad F_Q=2^{q_0-1}\vec{N}(Dw)^{q_0},\quad R_Q=2^{q_0-1}\vec{N}(Dv)^{q_0},$$
$$f=|g_\cN|^{q_0},\quad p_1=(q+\epsi)/q_0,\quad p_2=q/q_0
$$
to obtain $\vec{N}(Du)\in L^q(Q_0/4)$ with
\begin{equation*}
\Bigg(\dashint_{Q_0/4}\vec{N}(Du)^q\Bigg)^{1/q}
\lesssim \Bigg(\dashint_{Q_0}\vec{N}(Du)^{q_0}\Bigg)^{1/q_0}
+ \Bigg(\dashint_{Q_0}|g_\cN|^q\Bigg)^{1/q}.
\end{equation*}
Since the choice of the surface cube $Q_0$ is arbitrary, the theorem is proved by using a covering argument, \eqref{est-191020-0018}, and H\"older's inequality.
\end{proof}


\def\cprime{$'$}

\end{document}